\documentclass[article,10pt]{elsarticle}
\biboptions{sort&compress}
\usepackage{amsmath, amsthm, amssymb, enumerate}
\usepackage[T1]{fontenc}
\usepackage{amssymb}
\usepackage{amsmath}
\usepackage{tikz}
\usepackage{graphicx}
\usepackage{tabularx}
\usepackage{placeins}

\usepackage{lineno}
\usepackage{bbold}
\usepackage{bm}
\usepackage{derivative}
\usepackage{empheq}
\usepackage{siunitx}
\usepackage{wasysym}
\usepackage{hyperref}
\usepackage{pgfplots}
\pgfplotsset{compat=1.10}
\usepgfplotslibrary{statistics}
\usepackage{ifthen}
\usepackage{caption}
\usepackage{float}
\usepackage{fancyhdr}
\usepackage{caption}
\usepackage{hyperref}
\usepackage{geometry}
\usepackage{color}
\definecolor{revisionfivecolor}{rgb}{0.0,0.0,0.0}
\newcommand{\revfive}[1]{\textcolor{revisionfivecolor}{#1}}
\newenvironment{revisionfive}{\color{revisionfivecolor}}{}

\usepackage{datetime}
\usepackage{geometry}
\usepackage{eurosym}%
\usepackage[normalem]{ulem}
\usepackage{pdfpages}
\usepackage{comment}
\excludecomment{confidential}

\usepackage{dsfont}
\usepackage{subcaption} 
\newtheorem{theorem}{Theorem}[section]

\newtheorem{proposition}[theorem]{Proposition}

\newtheorem{definition}{Definition}[section]
\theoremstyle{definition}
\newtheorem{remark}{Remark}

\usetikzlibrary{matrix,calc}

\def\tilde{\widetilde}

\numberwithin{equation}{section}

\newcounter{eps}

\newcounter{ceps}

\begin{document}
\begin{frontmatter}
\title{A Temperature-Coupled Cahn–Hilliard–Stokes–Heat Model for Thermally Driven Phase Separation}

\author[1]{Maria Deliyianni}
\author[2]{Boris Muha}
\author[2,3,4]{Andrej Novak}

\affiliation[1]{organization={Department of Mathematics,
University of California, Riverside},
             addressline={Riverside, California},
             country={USA}}

 \affiliation[2]{organization={University of Zagreb Faculty of Science, Department of Mathematics},
             addressline={Bijenicka cesta 30},
             city={Zagreb},
             postcode={10000},
             country={Croatia}}

\affiliation[3]{organization={Faculty of Mathematics, University of Vienna},
             addressline={Oskar-Morgenstern-Platz 1},
             city={Vienna},
             postcode={1090},
             country={Austria}}
\affiliation[4]{organization={Dubrava University Hospital},
             addressline={Avenija Gojka Šuška 6},
             city={Zagreb},
             postcode={10000},
             country={Croatia}}

\renewcommand{\thefootnote}{\arabic{footnote}}
\vspace{-1cm}

\begin{abstract}
We study a diffuse–interface model for thermally driven phase separation in viscous incompressible mixtures. The system couples a convective Cahn–Hilliard equation for the order parameter with a Stokes subsystem for the velocity–pressure field and a heat equation for the temperature.
Temperature enters the bulk free energy through a Landau-type coefficient, while the phase field feeds back on the flow through concentration-dependent density and viscosity, \revfive{yielding a phenomenological temperature-coupled Cahn--Hilliard--Stokes--heat system. 
The model serves as a proxy for temperature-triggered 
condensation-like phase separation and does not include humidity, latent heat, 
vapor pressure, or capillary forcing; these effects are absorbed into the choice 
of the threshold temperature $\Theta_S$.}
We motivate the chemical potential by a temperature-dependent Landau free energy and derive a priori estimates for the regularized subproblems. On the analytical side, we prove local-in-time existence of weak solutions for a regularized auxiliary model of the coupled system. \revfive{This regularized formulation serves as an auxiliary analytical model for establishing local existence.}
On the numerical side, we use a first-order sequential finite-element discretization of a simplified quasi-static formulation. The heat equation is advanced by an implicit diffusion step, the variable-coefficient Stokes problem by a Taylor--Hood discretization, and the Cahn--Hilliard bulk derivative is evaluated at the previous time level, so every algebraic subproblem is linear.
\revfive{An isothermal diffusive test confirms mass conservation to roundoff and exhibits monotone discrete-energy decay for the tested parameter set. Time-step and mesh-refinement studies show first-order temporal and approximately second-order spatial behavior. The remaining computations are qualitative, parameter-specific illustrations, and no global discrete energy law is claimed for the non-isothermal sequential scheme.}
\end{abstract}




\begin{keyword}
Cahn--Hilliard equation \sep thermally driven phase separation \sep
 variable-coefficient Stokes flow \sep local weak solutions \sep finite elements
\MSC[2020] 35K55 \sep 47H10 \sep 35Q35 \sep 76D07 \sep 65M60 \sep 80A22
\end{keyword}
\end{frontmatter}


\section{Introduction} %
\label{sec:Intro}

Water condensation inside battery housings is a practically relevant issue in electric-vehicle systems. Moisture entering through pressure-compensation or ventilation elements may condense on colder internal surfaces, contributing to corrosion, insulation degradation, and accelerated ageing \cite{Kim2019}. Motivated by this application, we study an idealized diffuse-interface model for temperature-triggered phase transition in battery-inspired geometries, and provide a mathematically tractable phase-field framework for qualitative condensation-like behaviour driven by temperature gradients. Since experimental testing over wide ranges of geometry and operating conditions is costly, numerical simulation offers a useful complementary tool.

\revfive{
We emphasize however that the present model is phenomenological: it does not 
include humidity, latent heat release, a vapor-pressure relation, or 
phase-change mass sources. Rather, these effects are represented 
implicitly through the threshold temperature $\Theta_S$ in the 
Landau potential, which should be interpreted as a phenomenological 
proxy for a temperature-triggered condensation-like transition.}

\subsection*{The Cahn--Hilliard Equation}
The Cahn--Hilliard equation (CHE) is a classical diffuse-interface model for
phase separation and coarsening in binary mixtures~\cite{Cah58, Fan17, Novi08, Zhao20}.
It describes the evolution of a conserved order parameter
$c(\mathbf{x},t)$, which may be interpreted as a concentration difference, volume fraction, or other rescaled composition variable depending on the application.

The evolution is governed by
\begin{equation} \label{che}
    \frac{\partial c}{\partial t} + (\mathbf{u}\cdot\nabla)c
    = \nabla\cdot\bigl(M\nabla\mu\bigr),
\end{equation}
where $M>0$ is the mobility and $\mathbf{u}$ is the prescribed velocity
field. The chemical potential \(\mu\) is the variational derivative of the
free-energy functional
\[
F[c] = \int_\Omega \left(f(c) + \frac{\epsilon^2}{2}|\nabla c|^2\right)\,dx,
\]
namely
\[
\mu = \frac{\delta F}{\delta c} = f'(c) - \epsilon^2 \Delta c.
\]
Here $f(c)$ is the bulk free-energy density, while the gradient term penalizes
spatial inhomogeneity and gives rise to a diffuse interface of finite thickness.

A key aspect of the CHE is the concept of a diffuse interface: the interface between two phases is not sharp but has a finite thickness over which $c$ changes gradually. This eliminates the need for explicit interface tracking, making the CHE advantageous for simulating microstructural evolution in complex geometries~\cite{Cah58}. In regions where $c$ transitions from one equilibrium composition $c_\alpha$ to another $c_\beta$, the interface is represented smoothly by a narrow transition layer. In contrast, classical sharp-interface models (e.g., Stefan-type formulations or Navier--Stokes systems with an explicit surface-tension boundary condition) treat the interface as a lower-dimensional surface whose motion must be tracked explicitly in time, which becomes cumbersome in the presence of topological changes such as pinch-off, coalescence, or nucleation.

\subsection*{The Role of the Bulk Free-Energy Density}
An essential ingredient in the Cahn--Hilliard equation is the bulk free-energy density $f(c)$. Its derivative $f'(c)$ forms the homogeneous part of the
chemical potential, while the full chemical potential is
$\mu = f'(c)-\epsilon^2\Delta c$. Cahn and Hilliard~\cite{Cah58} originally suggested a singular logarithmic form for this potential:
\begin{equation}\label{non-smooth}
    f_{\text{log}}(c) = \frac{\theta}{2} \left( (1 + c) \ln(1 + c) + (1 - c) \ln(1 - c) \right) + \frac{\theta_c}{2} (1 - c^2),
\end{equation}
where \( 0 < \theta < \theta_c \) are constants (see~\cite{Cope92, Cher15}). 

This potential can be challenging to treat numerically due to its singularities near $c = \pm 1$ and is therefore commonly approximated by a polynomial of degree four, known as the double-well potential:
\begin{equation}
    f(c) = \frac{\alpha}{4}(c^2 - 1)^2,
    \quad\quad \text{or equivalently} \quad\quad
    f(c) = \frac{\alpha}{4} c^4 - \frac{\alpha}{2} c^2 + \text{const},
\end{equation}
where $\alpha$ is a positive constant. The double-well potential has been used extensively in models of spinodal
decomposition and two-phase flow~\cite{applications,diff}, and has also appeared in Cahn--Hilliard-based image inpainting models~\cite{Bert07,Bosc14,Brk20,Garc18,Nov22,Nov24}. As illustrated in Figure~\ref{fig1}, it provides a good approximation of the logarithmic potential away from the singular endpoints.

\begin{figure}[!ht]
  \centering
    \includegraphics[width=0.5\textwidth]{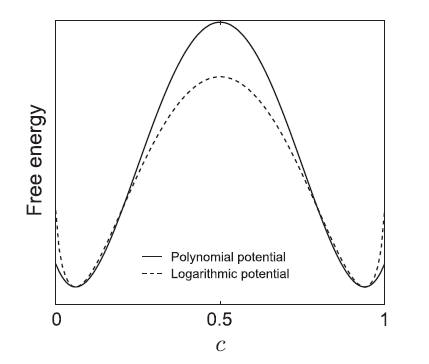}
  \caption{Comparison of the logarithmic potential and the polynomial double-well potential.}
  \label{fig1}
\end{figure}

Essentially, $f(c)$ drives the values of the conserved quantity $c$ toward one of its wells, i.e., toward energetically stable states. The double-well potential implies that $c = \pm 1$ (or $c = 0$ and $c = 1$, depending on scaling) are energetically favorable, thereby promoting phase separation in the system. This potential structure is key for modeling systems where the field $c$ oscillates between two distinct phases, such as in binary alloys, mixtures, or other phase-separating media.

\subsection*{Alternative Potentials}
In recent years, variants of the Cahn--Hilliard equation based on non-smooth potentials have been studied, particularly in the context of image inpainting problems~\cite{Bosc14, Brk20, Garc18}. One prominent example is the double-obstacle potential, defined as
\begin{equation}\label{non-smooth-1}
    f_{\text{do}}(c) = \psi(c) + \frac{1}{2}(1 - c^2),
     \quad \text{where} \quad \psi(c) = I_{[-1,1]}(c) = 
    \begin{cases} 
        0, & \text{if } c \in [-1,1],\\   
        +\infty, & \text{otherwise},
    \end{cases}
\end{equation}
and $I_A$ is the indicator function of the set $A$. In~\cite{Bosc14}, the authors developed an efficient finite-element solver for image inpainting problems and demonstrated that the non-smooth potential can produce superior reconstruction results compared to the smooth double-well potential.

From a modeling perspective, there has been considerable effort to identify physically meaningful forms of the free-energy density $f(c)$, motivated in part by the observation that solutions $c$ of the Cahn--Hilliard equation with the double-well potential may not remain bounded within $[-1,1]$. In many applications such unbounded solutions violate physical constraints (e.g., concentration or volume fraction bounds). The singularity of $f_{\text{log}}'(c)$, in contrast to the polynomial case, enforces $c \in (-1, 1)$ at the continuous level, at the price of numerical stiffness.

From a computational standpoint, the violation of bounds can be mitigated by thresholding, where values of $c > 1$ are reset to $c = 1$, and values of $c < -1$ are reset to $c = -1$. This approach, employed for instance by Cherfils et al.~\cite{Cher15d}, incorporates thresholding in the final steps of the numerical scheme to enforce physical bounds. At the analytical level, a well-posedness result for a more general form of \eqref{che} with degenerate diffusion mobility, where the diffusion coefficient depends on $c$, was established in \cite{Dai16}. These frameworks provide a rigorous background but typically focus on isothermal settings and do not explicitly resolve temperature-driven phase transitions such as condensation.

\subsection*{Mathematical Modeling of Phase Transitions}
Phase-field models describe phase transitions using an order parameter that varies continuously across interfaces. They are based on energy functionals (e.g., Landau-type free energies) with minima corresponding to stable phases (e.g., liquid and vapor). A classical approach postulates a Helmholtz free energy density with a double-well form for the two phases, plus a gradient energy term to penalize interfaces. The Cahn--Hilliard equation arises from this framework as the $H^{-1}$ gradient flow of the free energy, governing the evolution of the phase field by free-energy dissipation \cite{Granasy2000}.

Fabrizio et al.~\cite{Fabrizio2016} formulated a phase-field model for liquid--vapor transitions that enforces thermodynamic consistency. Their model introduces a temperature-dependent Helmholtz free energy and derives the phase-field evolution from the second law of thermodynamics, ensuring energy conservation and proper entropy production. 

Jamet et al.~\cite{Jamet2001} introduced a diffuse-interface formulation for liquid--vapor flows with phase change using a van der Waals-type gradient theory (similar in spirit to the Cahn--Hilliard framework) to model interfacial thermodynamics. In their approach, deviations from the equilibrium chemical potential drive evaporation/condensation, while the hydrodynamics are governed by a stress tensor that includes capillary effects. More recently, Wang et al.~\cite{Wang2021} developed a phase-field method for boiling heat transfer by coupling the Cahn--Hilliard equation with the Navier--Stokes and energy equations. Their method introduces a source term in the energy equation to account for latent heat and a velocity divergence correction to handle volumetric expansion due to phase change. While their focus was on boiling, the same framework can be adapted to simulate droplet formation under subcooled (condensing) conditions. The inclusion of a thermal solver ensures that temperature-dependent phase transitions are directly captured: when the local temperature falls below the dew point, the free energy landscape shifts to favor the liquid phase, and condensation ensues.

Recent research has evolved toward more sophisticated mathematical models governed by mass, momentum, energy balance, and diffusion equations, capturing the complex interactions within the condensation of water vapor from humid air. These models address the dynamics of condensing vapor interacting with non-condensable gases under varying flow conditions, such as those in turbulent or laminar regimes within vertical ducts (see, e.g., \cite{Kri07, El12, Das21}).

Another extension is the incorporation of additional fields into the free energy to model multi-phase systems involving gas, liquid, and solid components (e.g., electrodes or separators). von Wolff et al.~\cite{vonWolff2022} extended the Cahn--Hilliard--Navier--Stokes model to include a solid phase to study salt precipitation in porous media. Although their focus was on salt deposition, the framework informs multi-phase battery models where interactions among liquid water, gas, and solids are critical.

Phase-field models have also been implemented within the lattice Boltzmann framework, which is attractive for complex boundary conditions and parallelization. Zheng et al. \cite{Zheng} demonstrated how a phase-field approach can be implemented in LBM to simulate droplet formation during condensation. By coupling a phase-field variable with a thermal lattice Boltzmann solver, their approach successfully captured dropwise condensation and boiling in complex geometries relevant to battery thermal management.  Because condensation involves thin liquid layers and sharp thermal gradients, adaptive mesh refinement or multiscale methods are often employed. Jamet et al.~\cite{Jamet2001} used a fine grid near the interface to resolve the phase-change region, while coarser grids handled larger-scale flow features, ensuring numerical stability and accuracy in regions with rapid phase transitions.

These developments demonstrate that diffuse-interface, phase-field formulations are well suited to capture temperature-dependent liquid--vapor transitions in complex geometries. However, most existing studies either focus on idealized thermodynamic settings or on generic boiling/condensation configurations, rather than on battery-relevant housings with realistic boundary conditions and coupling to variable-density Stokes flow. This gap motivates the temperature-coupled Cahn--Hilliard--Stokes--Heat model and its analysis presented in this work.

\subsection*{Mathematical Challenges and Analytical Approach}

From an analytical perspective, the coupled Cahn--Hilliard--Stokes--heat system considered here differs in several essential ways from existing diffuse-interface frameworks, most notably the isothermal Cahn--Hilliard--Navier--Stokes analysis of \cite{abels2009diffuse}. While the overall structure is similar, the introduction of temperature dependence, mixed boundary conditions, and variable material coefficients creates new difficulties that require more delicate treatment. \revfive{{\bf To overcome these difficulties, the existence theory developed in Section \ref{sec:analysis} is carried out for a suitably regularized auxiliary formulation of the coupled system.} The regularization is introduced solely for the analytical construction of a local weak solution and will be described precisely when the analytical problem is formulated.}

A first challenge arises from the temperature-dependent free energy. The homogeneous part of the chemical potential depends explicitly on the evolving temperature field, so the phase-field equation is no longer autonomous even at fixed velocity. Classical energy arguments cannot be closed at the level of the Cahn--Hilliard subsystem alone; additional control of the temperature field is required to handle the coupling. Moreover, the temperature enters the bulk potential with only limited regularity, which necessitates a suitable regularization at the analytical level.

A second difficulty stems from the absence of global $L^\infty$ bounds for the order parameter. Unlike models based on singular potentials, the polynomial free energy employed here does not prevent overshoots of the concentration field. This precludes arguments relying on uniform boundedness of $c$ and forces the analysis to be carried out in weaker topologies. \textcolor{revisionfivecolor}{
The analysis is therefore carried out in a weaker functional setting. Rather than relying on global $L^\infty$ control of the concentration, we work with Sobolev regularity compatible with the continuity properties required by the fixed-point argument and combine energy estimates, compactness, and interpolation to recover sufficient control of the coupled system.
}

A further complication is introduced by the mixed velocity and thermal boundary conditions. Boundary terms generated by transport cannot be discarded
without compatibility between the two boundary decompositions; this motivates the assumption $\Gamma_{\Theta_N}\subseteq\Gamma_D$ in the analytical problem.
The chemical potential is not normalized independently: its mean is determined
by the constitutive relation and is estimated directly. For the Stokes subproblem, the pressure normalization depends on whether the traction boundary is empty.

Finally, the system exhibits strong coupling between distinct physical mechanisms: phase separation, creeping flow with variable density and viscosity, and heat transport. We address this by analyzing each component operator separately—first the heat equation, then the Cahn--Hilliard system, and finally the Stokes problem—and proving that each is well-defined and continuous with respect to its inputs. These operators are then composed into a single map, and existence of solutions to the full coupled system follows from a standard Schauder fixed-point argument.

\subsection*{Current contribution, overview}

In this work we develop and analyse a temperature–coupled Cahn--Hilliard–Stokes–Heat model tailored to condensation of humid air in confined geometries. The present work combines a thermodynamically motivated phase-field formulation, a local existence theory for a regularized auxiliary model, and numerical simulations based on a finite-element implementation. Its main contributions are summarized as follows.

\begin{revisionfive}
We also note that the analytical and numerical parts of the paper concern two closely related models. The existence theory is carried out for a regularized auxiliary formulation introduced to construct a local weak solution of the coupled problem, while the numerical experiments consider the corresponding unregularized quasi-static model. The regularized formulation serves solely as an analytical tool, whereas the numerical simulations are intended to illustrate the qualitative behavior of the underlying physical model. The precise relationship between these two formulations is described in Sections \ref{sec:analysis} and \ref{sec:numerics}.
\end{revisionfive}

\smallskip
\noindent
\emph{(i) Temperature–dependent Landau potential for condensation.}
Starting from the classical double–well free energy, we introduce a Landau–type bulk potential
\[
    f(c,\Theta) = 2c^4 - (1-\beta(\Theta))\,c^2,
\]
where $\beta(\Theta)=\Theta/\Theta_S$ encodes the local ratio between the temperature $\Theta$ and the saturation temperature $\Theta_S$. \revfive{The parameter $\Theta_S$ is a prescribed transition temperature and $\beta(\Theta)=\Theta/\Theta_S$ is a phenomenological control parameter.
For $\Theta>\Theta_S$, the bulk potential is single-well and favours a homogeneous composition state; for $\Theta<\Theta_S$, it is double-well and
favours phase-separated composition states. The model does not contain a humidity equation or latent heat, and therefore $\Theta_S$ is not a
quantitatively predicted dew point.}

\smallskip
\noindent
\emph{(ii) Coupled CH--Stokes--Heat system with variable density and viscosity.}
On the hydrodynamic side, we embed this potential into a diffuse-interface formulation for a two-phase gas-rich/liquid-rich mixture, leading to a
Cahn--Hilliard--Stokes system with:
\begin{itemize}
    \item affine density and viscosity laws \(\rho(c)\) and \(\eta(c)\) with fixed reference phase parameters,
    \item gravitational forcing through a buoyancy term $G\,\rho(c)\,\hat{\bm{g}}$,
    \item an advected temperature field satisfying a convection–diffusion equation and entering the bulk energy only through $\beta(\Theta)$.
\end{itemize}
The subsystems are coupled through advection, concentration-dependent density and viscosity, gravity, and the temperature dependence of the bulk potential.
The momentum equation does not contain a Korteweg/capillary force, and the heat equation contains no latent-heat source. 

\smallskip
\noindent
\emph{(iii) Rigorous existence theory under mixed boundary conditions.}
\textcolor{revisionfivecolor}{On the analytical side, we establish local-in-time existence of weak solutions for a regularized auxiliary formulation of the coupled system in two and three space dimensions.} The proof analyzes each component operator individually:
\begin{itemize}
    \item the heat operator $\mathcal{H}:\mathbf{u}\mapsto \Theta$ is shown to be well-defined and continuous, with a maximum principle ensuring nonnegativity of the temperature;
    \item the Cahn--Hilliard operator $\mathcal{C}:(\mathbf{u},\Theta)\mapsto c$ is proven to exist uniquely and depend continuously on both velocity and temperature;
    \item the Stokes operator $\mathcal{S}:c\mapsto \mathbf{u}$ is analyzed for variable-coefficient flow with mixed boundary conditions, yielding existence, uniqueness, and continuity.
\end{itemize}
These operators are composed into a single map $\mathcal{T}(\mathbf{u}) := \mathcal{S}(\mathcal{C}(\mathbf{u}, \mathcal{H}(\mathbf{u})))$ that exploits the triangular coupling structure. The analysis proceeds by establishing well-posedness and continuity of each component operator separately and combining these results through a Schauder fixed-point argument.

\smallskip
\noindent
\revfive{\emph{(iv) Linearized finite-element discretization and focused numerical
verification.}
For the numerical studies we use a first-order sequential scheme with
continuous $P_1$ elements for the scalar fields and Taylor--Hood $P_2$--$P_1$ elements for the Stokes problem. The bulk derivative is evaluated at the previous time level, so the Cahn--Hilliard update is linear.  For the stand-alone isothermal diffusive step we report mass conservation, discrete energy evolution, time-step refinement, mesh refinement, and a limited one-factor $Pe$/$Ch$ check.}

We qualitatively compare the hydrodynamic discretization and interface transport against a lid--driven cavity benchmark for two-phase Stokes flow and then explore four classes of scenarios relevant to battery-inspired geometries: isothermal phase separation above and below the transition temperature, gravity-driven phase separation under mixed no-slip/traction-free boundary conditions, and heated-boundary/cooled-boundary transition scenarios. The simulations illustrate how the proposed temperature–sensitive potential and coupled model reproduce qualitative features such as domain formation, coarsening, settling under gravity and wall–induced phase change, providing a mathematically controlled framework for future studies of condensation in electrochemical energy systems.

\paragraph*{Structure and main results}
In Section \ref{sec:model} we specify the temperature-coupled 
Cahn--Hilliard--Stokes--heat system and the modeling assumptions 
relevant for battery housings. Section \ref{sec:analysis} contains
the functional setting, the notion of weak solution, the core a priori
estimates and compactness arguments, and proves local-in-time existence
of weak solutions for the regularized auxiliary formulation of the coupled system.
In Section \ref{sec:numerics} we present numerical simulations for the corresponding unregularized quasi-static model, illustrating condensation patterns under battery-relevant operating conditions.

\section{Temperature coupled condensation model}
\label{sec:model}
\subsection{Temperature–dependent potential for the Cahn--Hilliard equation}
\label{potencijal}

To incorporate temperature effects into the Cahn--Hilliard equation, we modify the classical double–well potential so that the free–energy density depends explicitly on temperature. We consider a binary mixture of air and water, with the order parameter $c$ representing a (rescaled) local composition. At the dimensional level we assume that the minima of the double–well potential correspond to the pure phases $c=0$ (air) and $c=1$ (water). With this choice the reference concentration is $c_R = 0.5$, and the classical quartic double-well potential can be written in the form
\[
f_{\mathrm{dw}}(c) \;=\; -a\,(c-0.5)^2 + 2a\,(c-0.5)^4,
\]
for a suitable parameter $a>0$.
For convenience, we shift the order parameter so that the pure phases are symmetric around zero:
\[
c \;\mapsto\; c - 0.5,
\]
and, by reuse of notation, we denote the shifted variable again by $c$. This maps $[0,1]$ to $[-0.5,0.5]$ and yields
\begin{equation}
    f(c) \;=\; -a\,c^2 + 2a\,c^4.
\end{equation}

To introduce temperature dependence, we let the parameter $a$ change sign at the saturation temperature $\Theta_S$:
\begin{equation}
    a(\Theta) = \alpha\,(\Theta_S - \Theta),
\end{equation}
with $\alpha>0$ a scaling constant. For nondimensionalization we set $\alpha = 1/\Theta_S$ and introduce the dimensionless temperature
\begin{equation}\label{betaDefinition}
    \beta = \frac{\Theta}{\Theta_S},
\end{equation}
so that
\begin{equation}
    a(\Theta) = 1 - \beta.
\end{equation}
Motivated by this shifted double-well and in order to keep the quartic
coefficient positive for all temperatures, we adopt the Landau ansatz
\begin{equation}
    f(c,\Theta)=2c^4-(1-\beta)c^2.
    \label{eq:modified_potential}
\end{equation}
For $\beta<1$, the minima of $f(\cdot,\Theta)$ are located at
\[
c = \pm \frac{\sqrt{1-\beta}}{2},
\]
so the preferred phase values depend on temperature. The values
$\pm 0.5$ correspond to the reference case $\beta=0$.
When $\Theta<\Theta_S$ (i.e.\ $\beta<1$), the coefficient of $c^2$ is \emph{negative} and $f(\cdot,\Theta)$ has a symmetric double–well shape with two minima away from $c=0$, promoting phase separation. For $\Theta>\Theta_S$ (i.e.\ $\beta>1$), the coefficient of $c^2$ is positive and the potential has a unique minimum at $c=0$, favouring a homogeneous phase.

\begin{figure}[hbt!]
\centering
\begin{tikzpicture}[scale=3]

    \draw[->] (-1.0,0) -- (1.0,0) node[right] {$c$};
    \draw[->] (0,-0.2) -- (0,1.4) node[above] {$f(c, \Theta)$};
    
    \draw[thick, smooth, domain=-0.9:0.9, variable=\x, blue] 
        plot ({\x}, {2*\x*\x*\x*\x - 0.8*\x*\x});
    \node[blue] at (1.15,0.5) {\small $\Theta < \Theta_S$};
    
    \draw[thick, smooth, domain=-0.9:0.9, variable=\x, red] 
        plot ({\x}, {2*\x*\x*\x*\x + 0.3*\x*\x});
    \node[red] at (1.15,1.3) {\small $\Theta > \Theta_S$};
    
    \node[below] at (0,-0.05) {\small $c = 0$};
\end{tikzpicture}
\caption{Transition of the potential $f(c,\Theta)$ from a double–well form ($\Theta<\Theta_S$, phase separation) to a single–well form ($\Theta>\Theta_S$, homogeneous phase).}
\label{fig:potential_transition}
\end{figure}
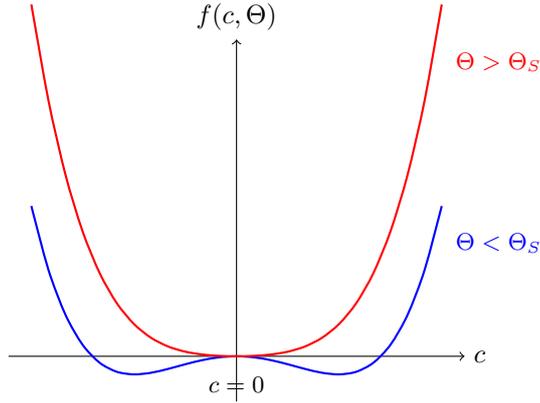

\subsection{System of equations}
\label{izvod}

Diffuse–interface models based on the Cahn--Hilliard and Navier--Stokes (or Stokes) equations are widely used to describe two–phase flows. Following the volume–averaged formulation of Ding et al.~\cite{diff}, we take the \emph{volume fraction} of one phase as the order parameter. Let
\begin{equation}
    \phi = \frac{V_1}{V_1+V_2} \in [0,1]
\end{equation}
denote the local volume fraction of phase~1 in a representative volume element. The partial mass densities in that element are
\[
\tilde{\rho}_1 = \rho_1 \phi, \qquad \tilde{\rho}_2 = \rho_2 (1-\phi),
\]
and the total density is
\begin{equation}
    \rho = \rho_1 \phi + \rho_2 (1-\phi).
\end{equation}

\noindent The mass balance for phase~1 reads
\begin{equation}
    \frac{\partial \tilde{\rho}_1}{\partial t} + \nabla \cdot \bm{m}_1 = 0,
    \label{cont_1}
\end{equation}
where $\bm{m}_1$ is the mass flux of phase~1. We decompose $\bm{m}_1$ into advective and diffusive parts:
\begin{equation}
    \bm{m}_1 = \tilde{\rho}_1 \bm{u} - \rho_1 \bm{j}_1,
\end{equation}
with $\bm{u}$ the volume–averaged velocity and $\bm{j}_1$ the (volume) diffusive flux of phase~1. Inserting this into \eqref{cont_1} gives
\begin{equation}
    \frac{\partial \tilde{\rho}_1}{\partial t} + \nabla \cdot (\tilde{\rho}_1 \bm{u}) - \nabla \cdot (\rho_1 \bm{j}_1) =0.
\end{equation}
Using $\tilde{\rho}_1 = \rho_1 \phi$ and assuming $\rho_1$ constant,
\begin{equation}
    \frac{\partial \phi}{\partial t} + \nabla \cdot (\bm{u}\phi) - \nabla \cdot \bm{j}_1 =0.
    \label{phi-1}
\end{equation}
Analogously, for phase~2 we obtain
\begin{equation}
    \frac{\partial (1-\phi)}{\partial t} + \nabla \cdot [\bm{u}(1-\phi)] - \nabla \cdot \bm{j}_2 =0.
    \label{phi-2}
\end{equation}

Adding \eqref{phi-1} and \eqref{phi-2} yields
\begin{equation}
    \nabla \cdot \bm{u} = \nabla \cdot (\bm{j}_1 + \bm{j}_2).
\end{equation}
Imposing \emph{volume conservation} of the diffusive fluxes,
\begin{equation}
    \bm{j}_1 + \bm{j}_2 = \bm{0},
\end{equation}
we obtain an incompressible velocity field,
\begin{equation}
    \nabla \cdot \bm{u} = 0.
    \label{final_incomp}
\end{equation}

Using \eqref{phi-1}, the incompressibility relation
\eqref{final_incomp}, and $\mathbf j_2=-\mathbf j_1$, we obtain
\begin{equation}
    \frac{\partial \phi}{\partial t}
    +(\mathbf u\cdot\nabla)\phi
    =\nabla\cdot\mathbf j,
    \qquad
    \mathbf j:=\mathbf j_1
    =\frac{\mathbf j_1-\mathbf j_2}{2}.
    \label{CH-phi}
\end{equation}
We close the constitutive relation by setting
$\mathbf j=M\nabla\mu$, with constant mobility $M>0$.

For consistency with the temperature–dependent potential in Section~\ref{potencijal}, we introduce the centred order parameter
\[
c = \phi - \tfrac12 \in [-0.5,0.5],
\]
and, by abuse of notation, write the Cahn--Hilliard equation in terms of $c$. The mixture density then takes the affine form
\begin{equation} 
    \rho(c) = \bigl(0.5 - c\bigr)\,\rho_2 + \bigl(0.5 + c\bigr)\,\rho_1,
    \label{DefRhoc}
\end{equation}
so that $c=-0.5$ corresponds to the pure phase with density $\rho_2$, and
$c=0.5$ to the pure phase with density $\rho_1$. Similarly, we define the viscosity by
\begin{equation}
    \eta(c) = \bigl(0.5 - c\bigr)\,\eta_2 + \bigl(0.5 + c\bigr)\,\eta_1,
    \label{DefEtac}
\end{equation}
so that $c=-0.5$ corresponds to the pure phase with viscosity $\eta_2$, and
$c=0.5$ to the pure phase with viscosity $\eta_1$.

In the model studied in this work, the reference phase parameters $\rho_1,\rho_2,\eta_1,\eta_2$ are fixed positive constants. Thus $\rho(c)$ and $\eta(c)$ depend on the order parameter \(c\) only. For the
battery-condensation interpretation, phase~1 may be viewed as a lighter gas-rich state and phase~2 as a heavier liquid-rich state, typically with $\rho_1<\rho_2$. Temperature effects are incorporated exclusively through the coefficient $\beta(\Theta)$ in the bulk free energy, and not through
additional temperature dependence in $\rho$  or $\eta$.

In the Cahn--Hilliard equation, the mobility $M$ controls the rate of diffusive relaxation. Since $M$ cannot be derived from first principles and anisotropic transport is not considered here, we take $M$ to be a positive constant, following e.g.\ \cite{Khat,diff}. More elaborate, concentration–dependent mobilities (such as $M(\phi)=\phi(1-\phi)$) would require a significantly more involved numerical treatment and are not pursued in this work.

\subsection{Nondimensionalization of the governing equations}

We use the characteristic scales
\[
L,\qquad U,\qquad \tau=\frac{L}{U},\qquad P=\frac{\eta_2 U}{L},
\]
together with the reference density \(\rho_2\), reference viscosity \(\eta_2\), transition temperature \(\Theta_S\), concentration scale \(c_0\), and chemical-potential scale \(\mu_0\). The nondimensional variables are defined by
\begin{equation}
\mathbf{x}=L\mathbf{x}^*,\qquad
t=\tau t^*,\qquad
\mathbf{u}=U\mathbf{u}^*,\qquad
c=c_0 c^*,\qquad
\mu=\mu_0 \mu^*,\qquad
p=P p^*,\qquad
\Theta=\Theta_S \Theta^*.
\end{equation}
Accordingly,
\[
\nabla=L^{-1}\nabla^*,\qquad \Delta=L^{-2}\Delta^*.
\]

After scaling temperature by $\Theta_S$, the coefficient in the Landau potential becomes the dimensionless temperature itself: $\beta(\Theta^*)=\Theta^*.$ After dropping asterisks, we therefore write
$\beta(\Theta)=\Theta.$ Thus, throughout the analytical developments in Section \ref{sec:analysis}, we write the Landau coefficient simply as $\Theta$.

For convenience, the variables and dimensionless groups are summarized in Tables~\ref{tab:nondim_variables} and~\ref{tab:nondim_parameters}.

\begin{table}[h!]
\centering
\caption{Nondimensional variables and reference scales}
\label{tab:nondim_variables}
\begin{tabular}{lll}
\hline
\textbf{Quantity} & \textbf{Scale} & \textbf{Dimensionless form} \\
\hline
Position \(\mathbf{x}\) & \(L\) & \(\mathbf{x}^*=\mathbf{x}/L\) \\
Time \(t\) & \(\tau=L/U\) & \(t^*=t/\tau\) \\
Velocity \(\mathbf{u}\) & \(U\) & \(\mathbf{u}^*=\mathbf{u}/U\) \\
Order parameter \(c\) & \(c_0\) & \(c^*=c/c_0\) \\
Chemical potential \(\mu\) & \(\mu_0\) & \(\mu^*=\mu/\mu_0\) \\
Pressure \(p\) & \(P=\eta_2 U/L\) & \(p^*=p/P\) \\
Density \(\rho\) & \(\rho_2\) & \(\rho^*=\rho/\rho_2\) \\
Viscosity \(\eta\) & \(\eta_2\) & \(\eta^*=\eta/\eta_2\) \\
Temperature \(\Theta\) & \(\Theta_S\) & \(\Theta^*=\Theta/\Theta_S\) \\
\hline
\end{tabular}
\end{table}

\begin{table}[h!]
\centering
\caption{Dimensionless groups}
\label{tab:nondim_parameters}
\begin{tabular}{lll}
\hline
\textbf{Group} & \textbf{Definition} & \textbf{Interpretation} \\
\hline
\(Pe\) & \(Pe=\dfrac{ULc_0}{M\mu_0}\) & mass advection / diffusion \\
\(Pe_{\Theta}\) & \(Pe_{\Theta}=\dfrac{UL}{\kappa}\) & heat advection / diffusion \\
\(Ch\) & \(Ch=\dfrac{\epsilon}{L}\) & interface thickness / domain length \\
\(\lambda_\rho\) & \(\lambda_\rho=\dfrac{\rho_1}{\rho_2}\) & density ratio \\
\(\lambda_\eta\) & \(\lambda_\eta=\dfrac{\eta_1}{\eta_2}\) & viscosity ratio \\
\(Re\) & \(Re=\dfrac{\rho_2 UL}{\eta_2}\) & inertial / viscous forces \\
\(G\) & \(G=\dfrac{\rho_2 gL^2}{\eta_2 U}\) & gravitational / viscous forces \\
\hline
\end{tabular}
\end{table}

\subsubsection{Nondimensionalization of the Cahn--Hilliard equation}

The dimensional Cahn--Hilliard system is
\begin{align}
\frac{\partial c}{\partial t} + \nabla\cdot(\mathbf{u}c) &= \nabla\cdot(M\nabla\mu), \label{CH_dimensional}\\
\mu &= \frac{\partial f(c,\Theta)}{\partial c} - \epsilon^2 \Delta c. \label{mu_dimensional}
\end{align}
Using \(\nabla\cdot\mathbf{u}=0\), substituting the above scalings, choosing \(c_0=1\), and selecting \(\mu_0\) so that the prefactor of \(\partial_c f\) is one, we obtain
\begin{align}
\frac{\partial c^*}{\partial t^*} + \mathbf{u}^*\cdot\nabla^* c^* &= \frac{1}{Pe}\Delta^* \mu^*, \\
\mu^* &= \frac{\partial f(c^*,\Theta^*)}{\partial c^*} - Ch^2 \Delta^* c^*.
\end{align}
For the Landau potential
\[
f(c,\Theta)=2c^4-(1-\Theta)c^2,
\]
this gives
\[
\mu^* = 8c^{*3} - 2(1-\Theta^*)c^* - Ch^2\Delta^* c^*.
\]
Dropping asterisks yields
\begin{align}
\frac{\partial c}{\partial t} + \mathbf{u}\cdot\nabla c &= \frac{1}{Pe}\Delta\mu, \label{CH_nondim_final}\\
\mu &= 8c^3 - 2(1-\Theta)c - Ch^2\Delta c. \label{mu_nondim_final}
\end{align}

After nondimensionalization, the affine density and viscosity laws become
\begin{align}
\rho(c) &= (0.5-c) + (0.5+c)\lambda_\rho, \\
\eta(c) &= (0.5-c) + (0.5+c)\lambda_\eta.
\end{align}

\subsubsection{Nondimensionalization of the Stokes equations}

The dimensional momentum balance reads
\begin{align}
\rho(c)\bigl(\partial_t\mathbf{u}+\mathbf{u}\cdot\nabla\mathbf{u}\bigr)
&= -\nabla p + \nabla\cdot\bigl(2\eta(c)\mathcal{E}(\mathbf{u})\bigr) + \rho(c)\mathbf{g}, \label{eq:NS_dimensional}\\
\nabla\cdot\mathbf{u} &= 0, \label{eq:continuity_dimensional}
\end{align}
where
\[
\mathcal{E}(\mathbf{u})=\tfrac12\bigl(\nabla\mathbf{u}+(\nabla\mathbf{u})^\top\bigr).
\]
Substituting the scales gives
\[
Re\,\rho^*(c^*)\Bigl(\frac{\partial \mathbf{u}^*}{\partial t^*}
+\mathbf{u}^*\cdot\nabla^*\mathbf{u}^*\Bigr)
= -\nabla^* p^*
+ \nabla^*\cdot\bigl(2\eta^*(c^*)\mathcal{E}(\mathbf{u}^*)\bigr)
+ G\,\rho^*(c^*)\,\hat{\mathbf g}.
\]

In the creeping-flow regime \(Re\ll 1\), inertia is neglected and the dimensionless Stokes system becomes
\begin{align}
-\nabla p + \nabla\cdot\bigl(2\eta(c)\mathcal{E}(\mathbf{u})\bigr)
+ G\,\rho(c)\,\hat{\mathbf g} &= \mathbf{0}, \label{eq:stokes_nondimensional}\\
\nabla\cdot\mathbf{u} &= 0. \label{eq:continuity_nondimensional}
\end{align}
\begin{revisionfive}
 For notational convenience in the analysis of Section \ref{sec:analysis}, we introduce the 
constant body-force vector
\begin{equation}\label{theonewithG}
\mathbf{G} := G\hat{\mathbf{g}} 
\end{equation}
\end{revisionfive}
\subsubsection{Nondimensionalization of the heat equation}
The dimensional heat equation is
\begin{equation}
\frac{\partial \Theta}{\partial t} + \mathbf{u}\cdot\nabla\Theta
= \nabla\cdot(\kappa\nabla\Theta). \label{eq:heat_dimensional_full}
\end{equation}
Using \(\Theta=\Theta_S\Theta^*\), we have
\[
\frac{\partial \Theta}{\partial t}
= \frac{\Theta_S}{\tau}\frac{\partial \Theta^*}{\partial t^*},\qquad
\mathbf{u}\cdot\nabla\Theta
= \frac{U\Theta_S}{L}\mathbf{u}^*\cdot\nabla^*\Theta^*,\qquad
\nabla\cdot(\kappa\nabla\Theta)
= \frac{\kappa\Theta_S}{L^2}\Delta^*\Theta^*.
\]
Therefore,
\[
\frac{\partial \Theta^*}{\partial t^*}
+ \mathbf{u}^*\cdot\nabla^*\Theta^*
= \frac{1}{Pe_\Theta}\Delta^*\Theta^*,
\qquad Pe_\Theta=\frac{UL}{\kappa}.
\]
Dropping asterisks gives
\begin{equation}
\frac{\partial \Theta}{\partial t} + \mathbf{u}\cdot\nabla\Theta
= \frac{1}{Pe_\Theta}\Delta\Theta. \label{eq:heat_nondim_final}
\end{equation}

\subsection{The coupled temperature-dependent phase-field system}
\label{fullsystem}

Collecting the nondimensional equations above, the regularized system used in Section~\ref{sec:analysis} is
\begin{align}
\frac{\partial c}{\partial t} + \mathbf{u}\cdot\nabla c &= \frac{1}{Pe}\Delta\mu, \label{CHND}\\
\mu &= 8c^3 - 2\bigl(1-\hat\Theta\bigr)c - Ch^2\Delta c, \label{CHmND}\\
-\alpha \partial_t\mathbf{u} - \nabla p + \nabla\cdot\bigl(2\eta(c)\mathcal{E}(\mathbf{u})\bigr)
+ \,\rho(c)\,{\mathbf G} &= \mathbf{0}, \label{NDStokes}\\
\nabla\cdot\mathbf{u} &= 0, \label{NDStokes2}\\
\frac{\partial \Theta}{\partial t} + \mathbf{u}\cdot\nabla\Theta &= \frac{1}{Pe_\Theta}\Delta\Theta. \label{HeatND}
\end{align}

The initial conditions are
\begin{equation}\label{theonewithinitialconditions}
\mathbf{u}(\mathbf{x},0)=\mathbf{u}_0(\mathbf{x}),\qquad
c(\mathbf{x},0)=c_0(\mathbf{x}),\qquad
\Theta(\mathbf{x},0)=\Theta_0(\mathbf{x}),
\qquad \mathbf{x}\in\Omega.
\end{equation}
The boundary conditions are
\begin{align}
\nabla c\cdot\mathbf{n} = \nabla\mu\cdot\mathbf{n} &= 0
&& \text{on }\partial\Omega, \\
\mathbf{u} &= \mathbf{0}
&& \text{on }\Gamma_D,\quad t>0, \label{uBCD}\\
\bigl(-p\mathbf{I}+2\eta(c)\mathcal{E}(\mathbf{u})\bigr)\mathbf{n} &= \mathbf{0}
&& \text{on }\Gamma_N:=\partial\Omega\setminus\Gamma_D,\quad t>0, \label{uBCTF}\\
\Theta &= \Theta_D
&& \text{on }\Gamma_\Theta,\quad t>0, \label{HeatDBC}\\
\nabla\Theta\cdot\mathbf{n} &= 0
&& \text{on }\Gamma_{\Theta_N}:=\partial\Omega\setminus\Gamma_\Theta,\quad t>0. \label{HeatNBC}
\end{align}
We assume that \(\Gamma_D\subset\partial\Omega\) has positive measure.

Here \(\rho(c)\) and \(\eta(c)\) are the dimensionless mixture laws introduced in \eqref{DefRhoc} and \eqref{DefEtac}, respectively and $\mathbf{G}$ is given by \eqref{theonewithG}.

\begin{revisionfive}
The auxiliary regularizations consisting of the parameter \(\alpha>0\) and the spatially mollified temperature \(\hat{\Theta}\) are introduced solely for the analytical construction of a local weak solution. Accordingly, {\bf Section \ref{sec:analysis} studies this regularized auxiliary formulation, whereas Section \ref{sec:numerics} considers the corresponding unregularized quasi-static model obtained by setting \(\alpha=0\) and \(\hat{\Theta}=\Theta\).} The numerical simulations are intended to illustrate qualitative features of the quasi-static model rather than to provide a numerical validation of the analytical results.
\end{revisionfive}

\begin{revisionfive}
\begin{remark}[Boundary and sign assumptions for the analytical problem]
\label{geometric}
Throughout Section \ref{sec:analysis}, the decompositions
\[
\partial\Omega=\overline{\Gamma_D}\cup\overline{\Gamma_N},
\qquad
\partial\Omega=\overline{\Gamma_\Theta}\cup\overline{\Gamma_{\Theta_N}},
\]
are understood up to sets of surface measure zero, with the members of each pair relatively open and disjoint. We assume
\[
|\Gamma_D|>0,
\qquad
\Gamma_{\Theta_N}\subseteq\Gamma_D.
\]
Consequently every velocity satisfying the no-slip condition on \(\Gamma_D\) also satisfies
\(\mathbf u\cdot\mathbf n=0\) on \(\Gamma_{\Theta_N}\). This condition is used both in the heat-energy identity and in the nonnegativity argument. We also assume that \(\Theta_D\ge0\) is constant in space and time and that \(\Theta_0\ge0\) almost everywhere in \(\Omega\). These additional boundary assumptions concern only the analytical problem; numerical configurations outside this class are not covered by Theorem \ref{thm:main-existence}.
\end{remark}
\end{revisionfive}


\section{Mathematical Analysis}\label{sec:analysis}

We establish local-in-time existence of weak solutions to the coupled Cahn--Hilliard--Stokes--heat system introduced in Section \ref{fullsystem}. The heat operator $\mathcal{H}$ depends only on the velocity $\mathbf{u}$, the Cahn--Hilliard operator $\mathcal{C}$ depends on both $(\mathbf{u}, \Theta)$, and the Stokes operator $\mathcal{S}$ depends on the concentration $c$, yielding the composite solution operator
\[
\mathcal{T}(\mathbf{u}) := \mathcal{S}\big(\mathcal{C}(\mathbf{u}, \mathcal{H}(\mathbf{u}))\big).
\]
Existence follows by applying the Schauder Fixed-Point Theorem to $\mathcal{T}$.

\begin{revisionfive}
For a fixed \(s\in(1/2,1)\), define
\begin{align}
V_D:= &\left\{\mathbf v\in H^1(\Omega)^d:
\nabla\!\cdot\!\mathbf v=0\text{ in }\mathcal D'(\Omega),\ 
\operatorname{Tr}\mathbf v=0\text{ on }\Gamma_D\right\},
\quad
L^2_\sigma(\Omega):=\overline{V_D}^{\,L^2(\Omega)^d},\label{theonewithVD}\\[0.3em]
Y_s:= & \left\{\mathbf v\in H^s(\Omega)^d:
\nabla\!\cdot\!\mathbf v=0\text{ in }\mathcal D'(\Omega),\ 
\operatorname{Tr}\mathbf v=0\text{ on }\Gamma_D\right\},
\quad
E_s(T):=L^2(0,T;Y_s) \label{theonewithEs}.
\end{align}
Here, $\operatorname{Tr}$ denotes the trace operator on $\partial\Omega$. Since $s>\frac12$, the trace condition defining $Y_s$ is well defined. In addition, the space \(Y_s\) is a closed subspace of \(H^s(\Omega)^d\), and \(V_D\hookrightarrow Y_s\) continuously.

\begin{theorem}[Local existence of weak solutions]
\label{thm:main-existence}
Let \(\Omega\subset\mathbb R^d\), \(d=2,3\), be a bounded \(C^{1,1}\) domain. Assume the boundary decompositions and the containment \(\Gamma_{\Theta_N}\subseteq\Gamma_D\) stated in Remark \ref{geometric}. Let
\[
c_0\in H^2(\Omega),\quad \partial_{\mathbf n}c_0=0\text{ on }\partial\Omega,
\quad
\mathbf u_0\in L^2_\sigma(\Omega),
\quad
\Theta_0\in H^1(\Omega),
\]
with \(\Theta_0|_{\Gamma_\Theta}=\Theta_D\), \(\Theta_0\ge0\) almost everywhere, and constant \(\Theta_D\ge0\). Assume further that
\[
\eta\in W^{1,\infty}(\mathbb R),
\quad
0<\eta_{\min}\le \eta(r)\le\eta_{\max}<\infty
\quad(r\in\mathbb R),
\]
and
\[
\rho\in C_b(\mathbb R),
\quad
0<\rho(r)\le\rho_{\max}
\quad(r\in\mathbb R).
\]
Fix \(\alpha>0\) and \(\delta>0\). Then there exists \(T>0\), depending on the fixed dimensionless parameters, \(\alpha\), \(\delta\), the coefficient bounds, the domain, and the norms of the initial data, such that the regularized system \eqref{CHND}--\eqref{HeatND} admits a weak solution \((c,\mu,\mathbf u,\Theta)\) on \((0,T)\) satisfying
\begin{align}
X_c(T)&:=L^\infty(0,T;H^1(\Omega))\cap L^2(0,T;H^2(\Omega))
\cap H^1(0,T;H^{-1}(\Omega)), \label{Xcdef}\\[0.3em]
X_u(T)&:=L^\infty(0,T;L^2_\sigma(\Omega))\cap L^2(0,T;V_D)
\cap H^1(0,T;V_D'), \label{Xudef}\\[0.3em]
X_\Theta(T)&:=L^\infty(0,T;L^2(\Omega))\cap L^2(0,T;H^1(\Omega))
\cap W^{1,1}(0,T;H^{-1}(\Omega)), \label{Xthetadef}
\end{align}
with
\[
c\in X_c(T),\quad \mu\in L^2(0,T;H^1(\Omega)),\quad
\mathbf u\in X_u(T),\quad \Theta\in X_\Theta(T).
\]
The initial conditions are attained in the standard weakly continuous representatives; in particular \(\mathbf u(0)=\mathbf u_0\) in \(L^2_\sigma(\Omega)\). Moreover, \(\Theta\ge0\) almost everywhere on \(\Omega\times(0,T)\). A pressure associated with the divergence-free weak formulation is recovered as described in Section \ref{sec:Stokes}.
\end{theorem}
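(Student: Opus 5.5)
The proof follows the route announced in the introduction: a Schauder fixed point for $\mathcal T(\mathbf u)=\mathcal S(\mathcal C(\mathbf u,\mathcal H(\mathbf u)))$. The map acts on a closed, convex, bounded set
\[
K_R:=\{\mathbf v\in E_s(T):\ \|\mathbf v\|_{L^2(0,T;H^s)}\le R\}
\]
in $E_s(T)$, with $s\in(1/2,1)$ fixed as in \eqref{theonewithEs}. The weaker space $E_s(T)$ is used rather than $X_u(T)$ because the Aubin--Lions lemma gives compactness of $X_u(T)\hookrightarrow E_s(T)$, since $V_D\Subset Y_s\hookrightarrow V_D'$. The plan is to show that $\mathcal T$ maps $K_R$ into a bounded subset of $X_u(T)$ for suitable $R$ and small $T$, and that $\mathcal T$ is continuous in the $E_s$ topology.

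\textbf{Heat operator.} For $\mathbf u\in K_R$, I solve \eqref{HeatND} by Galerkin with lifting $\Theta-\Theta_D$; this is admissible because $\Theta_D$ is constant. Testing with $\Theta-\Theta_D$ makes the transport term $\int(\mathbf u\cdot\nabla\Theta)(\Theta-\Theta_D)$ vanish. Indeed, it reduces to a boundary integral of $\mathbf u\cdot\mathbf n$, and this is zero on $\Gamma_\Theta$ because the integrand vanishes there, and zero on $\Gamma_{\Theta_N}\subseteq\Gamma_D$ by no-slip. The result is an $L^\infty L^2\cap L^2H^1$ bound independent of $\mathbf u$. The time derivative is controlled only in $L^1H^{-1}$: $\mathbf u\Theta$ lies in $L^1L^2$ when $\mathbf u$ is merely in $L^2H^s$, which explains the $W^{1,1}$ in \eqref{Xthetadef}. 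Testing with $\Theta^-$ gives $\Theta\ge0$, since $\Theta^-$ vanishes on $\Gamma_\Theta$ because $\Theta_D\ge0$. Uniqueness and continuity $\mathbf u_n\to\mathbf u$ in $E_s\Rightarrow\Theta_n\to\Theta$ in $L^2L^2$ follow from a difference estimate. For this I use $H^s\hookrightarrow L^{p}$ with $p>2$ and the Gagliardo--Nirenberg inequality on $\Theta$, or alternatively compactness plus uniqueness of the limit.

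\textbf{Cahn--Hilliard operator.} The mollification $\hat\Theta$ (scale $\delta$) gives $\|\hat\Theta\|_{L^\infty(0,T;W^{1,\infty})}\le C_\delta\|\Theta\|_{L^\infty L^2}$, so the temperature coefficient is a bounded smooth potential perturbation. I solve \eqref{CHND}--\eqref{CHmND} by Galerkin in Neumann eigenfunctions and derive the following estimates:
\begin{enumerate}[(a)]
\item mass conservation;
\item testing with $\mu$ and with $\partial_t c$-type combinations, or equivalently with $-\Delta c$ and $\Delta^2 c$, to obtain $c\in L^\infty H^1\cap L^2H^2$ (after a higher-order test $\mu\in L^2H^1$) and $\partial_tc\in L^2H^{-1}$, using $c_0\in H^2$.
\end{enumerate}
The convective term $\int(\mathbf u\cdot\nabla c)\mu$ is bounded by $\|\mathbf u\|_{L^{p}}\|\nabla c\|_{L^{q}}\|\mu\|_{L^{r}}$ with $H^s\hookrightarrow L^{2d/(d-2s)}$. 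It is absorbed by Young's inequality against dissipation, producing a Gronwall factor $\exp(C\|\mathbf u\|_{L^2H^s}^2)\le e^{CR^2}$. The cubic term is handled with $H^1\hookrightarrow L^6$. Uniqueness and continuity in $(\mathbf u,\Theta)$ come from an $H^{-1}$-type difference estimate, using the local Lipschitz property of $c\mapsto c^3$ on $H^1$-bounded sets.

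\textbf{Stokes operator and closing the argument.} For $c\in X_c$, the regularized problem $\alpha\partial_t\mathbf u-\nabla\cdot(2\eta(c)\mathcal E(\mathbf u))+\nabla p=\rho(c)\mathbf G$ is a linear parabolic problem in $V_D$. Korn's inequality holds on $V_D$ because $|\Gamma_D|>0$. Testing with $\mathbf u$ gives
\[
\alpha\|\mathbf u\|_{L^\infty L^2}^2+\eta_{\min}\|\mathbf u\|_{L^2H^1}^2\le \alpha\|\mathbf u_0\|^2+C\rho_{\max}^2T,
\]
and the equation yields $\partial_t\mathbf u\in L^2V_D'$. This bound is independent of $c$, so I take $R$ larger than the resulting $C(\mathbf u_0)$, and $T\le1$ suffices for self-mapping. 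Smallness of $T$ enters only through the Gronwall constants. Continuity of $c\mapsto\mathbf u$ follows from the difference equation: the term $(\eta(c_1)-\eta(c_2))\mathcal E(\mathbf u_2)$ tends to zero by dominated convergence along a.e.-convergent subsequences, since $\eta$ is bounded and continuous, and similarly for $\rho$. Uniqueness then upgrades this to convergence of the whole sequence. The pressure is recovered via de Rham or Ne\v{c}as, as referenced. Schauder's theorem gives a fixed point, and the solution inherits the stated regularity and $\Theta\ge0$.

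\textbf{Main obstacle.} I expect the hardest step to be the continuity of the full composition in the weak $E_s$ topology. The velocity input is only in $L^2H^s$, so the transport terms in both the heat and CH difference estimates are borderline, especially for $d=3$ in the heat equation, where $\Theta$ is only $L^\infty L^2\cap L^2H^1$. My first attempt would be a compactness argument rather than direct Lipschitz estimates: uniform bounds in $X_\Theta$ and $X_c$, Aubin--Lions (in its $W^{1,1}$ variant for $\Theta$), passage to the limit in the weak formulations, and uniqueness of each subproblem to identify the limit.
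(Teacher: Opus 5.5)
Your architecture is the same as the paper's. Both use Schauder on the $R$-ball of $E_s(T)$ and the chain $\mathcal H\to\mathcal C\to\mathcal S$. Both lift by the constant $\Theta_D$ and kill the transport boundary term via $\Gamma_{\Theta_N}\subseteq\Gamma_D$, bound $\partial_t\Theta$ in $L^1(H^{-1})$, and use the $\Theta^-$ test. Continuity comes from compactness plus uniqueness, relative compactness from $V_D\Subset Y_s\hookrightarrow V_D'$, and $R$ is fixed from $\mathbf u_0$ before $T$.

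\textbf{The step that fails.} Your item (a), conservation of $\int_\Omega c$, is false here. Only a traction condition is imposed on $\Gamma_N$, so $\mathbf u\cdot\mathbf n$ is free there, and taking $\varphi\equiv1$ in the weak form gives
\[
\frac{d}{dt}\int_\Omega c\,dx=-\int_\Omega\mathbf u\cdot\nabla c\,dx=-\int_{\Gamma_N}(\mathbf u\cdot\mathbf n)\,c\,dS .
\]
This is nonzero in general; only $\int_{\Gamma_N}\mathbf u\cdot\mathbf n\,dS=0$ holds, and (a) is true only if $\Gamma_N=\varnothing$. Two of your steps rely on it.
\begin{enumerate}
\item In the energy estimate, the dissipation $\|\nabla\mu\|^2$ controls only $\mu-\bar\mu$. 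The remainder $\bar\mu\int_\Omega\mathbf u\cdot\nabla c\,dx$ no longer cancels, and $\bar\mu$ must be read off the constitutive relation, which is cubic in $c$. This is why the paper takes $\psi=\mu_n$ in the $\mu$-equation to get $\|\mu_n\|^2\le\varepsilon\|\nabla\mu_n\|^2+C(\|c_n\|_{H^1}^2+\|c_n\|_{H^1}^6)$, and then closes with a nonlinear Gronwall argument on a short interval $(0,T_{CH}(R))$. Your linear factor $e^{CR^2}$ is therefore not justified. This costs nothing for the theorem: fix $R$, then take $T\le\min\{1,T_{CH}(R)\}$. But the estimate must be done that way.
\item Your $H^{-1}$ difference estimate for uniqueness applies $(-\Delta_N)^{-1}$ to $e=c_1-c_2$, which requires $\int_\Omega e\,dx=0$. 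Here instead $\frac{d}{dt}\int_\Omega e\,dx=-\int_{\Gamma_N}(\mathbf u\cdot\mathbf n)\,e\,dS$. You can either track $\bar e$ separately and apply the inverse Neumann Laplacian to $e-\bar e$, or follow the paper. The paper tests the difference equation with $e$ itself, which needs no mean condition and gives $\|\Delta e\|^2$ as dissipation. It then bounds $(\mathbf u\cdot\nabla e,e)$ by Gagliardo--Nirenberg, so that only $\|\mathbf u\|_{H^s}^{q}$ with $q<2$ enters the Gronwall coefficient.
\end{enumerate}

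\textbf{Where your route differs, to your advantage.} Your Stokes continuity argument is genuinely different from the paper's and cleaner. You run the energy estimate for $\mathbf u_n-\mathbf u$ with source terms $(\eta(c_n)-\eta(c))\mathcal E(\mathbf u)$, built from the fixed limit strain, and $(\rho(c_n)-\rho(c))\mathbf G$. Dominated convergence then gives convergence in $L^\infty(L^2)\cap L^2(V_D)$, using only a.e.\ convergence of $c_n$ along subsequences. The paper instead uses $H^r\hookrightarrow L^\infty$ with $r>3/2$ to get $\eta(c_n)\to\eta(c)$ in $L^2(L^\infty)$, because it pairs against the varying $\mathcal E(\mathbf u_n)$. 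It then needs Aubin--Lions, uniqueness and interpolation to reach $L^2(H^s)$. Your version removes the need for $c_n\to c$ in $L^2(H^r)$ with $r>3/2$ from the Cahn--Hilliard step.
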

\end{revisionfive}

\noindent {\bf Proof outline}
\begin{revisionfive}
We establish existence and continuity of the heat operator $\mathcal{H}$ (Section \ref{sec:Heat}), the Cahn--Hilliard operator $\mathcal{C}$ (Section \ref{sec:Cahn-Hilliard}), and the Stokes operator $\mathcal{S}$ (Section \ref{sec:Stokes}). The composite map $\mathcal{T}(\mathbf{u}) := \mathcal{S}(\mathcal{C}(\mathbf{u}, \mathcal{H}(\mathbf{u})))$ is then shown to be continuous with relatively compact image in the admissible Schauder space $E_s(T)$. Existence follows by applying Schauder's theorem in Section \ref{sec:fixedpoint}.
\end{revisionfive}

\begin{revisionfive}
\begin{remark}[Regularization and numerical implementation]\label{remarkfortemp}
The analytical results in this section are established for the doubly regularized system \eqref{CHND}--\eqref{HeatND}. First, the term \(-\alpha\partial_t\mathbf u\), with \(\alpha>0\), is retained in the Stokes equation. Second, the temperature in the chemical potential is replaced by
\[
\hat\Theta=(\rho_\delta*\widetilde\Theta)|_\Omega,
\]
where \(\rho_\delta\) is a standard spatial mollifier and \(\widetilde\Theta\) is the extension of \(\Theta\) by zero to \(\mathbb R^d\). These regularizations are used only to construct a local weak solution of the auxiliary analytical problem. In Section \ref{sec:numerics}, we instead set \(\alpha=0\) and use the unmollified temperature. Accordingly, the numerical experiments are carried out for the corresponding quasi-static model, and no analytical connection between the two formulations is established in the present work.
\end{remark}
\end{revisionfive}

\begin{remark}[Notation conventions]
Throughout this section, we adopt the shorthand notation
\[
L^2(H^2)=L^2(0,T;H^2(\Omega)),\qquad
L^\infty(H^1)=L^\infty(0,T;H^1(\Omega)),
\]
and similar conventions for other function spaces. Likewise, we write
\(\|\cdot\|\) for the \(L^2(\Omega)\) norm. In both cases, the full notation is
occasionally retained for consistency of presentation.
\end{remark}

\subsection{The Heat Operator} \label{sec:Heat}

We begin by analyzing the heat equation for a prescribed divergence-free velocity field $\mathbf{u}$. We establish existence and regularity of weak solutions and prove continuity of the solution operator
\[
\mathcal{H} : \mathbf{u} \mapsto \Theta,
\]
where $\Theta$ denotes the temperature field solving the heat subproblem \eqref{HeatND} subject to the mixed boundary conditions \eqref{HeatDBC}--\eqref{HeatNBC}.

\begin{revisionfive}
\begin{proposition}[Existence and regularity of weak solutions to the heat subproblem]
\label{heatwell}
Let \(R>0\), \(s\in(1/2,1)\), and let \(\mathbf u\in E_s(T)\) satisfy
\(\|\mathbf u\|_{E_s(T)}\le R\), where $E_s(T)$ is defined in \eqref{theonewithEs}. Assume the boundary relation
\(\Gamma_{\Theta_N}\subseteq\Gamma_D\) from Remark \ref{geometric}. Let
\(\Theta_0\in H^1(\Omega)\) satisfy
\(\Theta_0|_{\Gamma_\Theta}=\Theta_D\), where \(\Theta_D\) is constant.
Then, for \(T>0\) sufficiently small, the heat problem \eqref{HeatND}, \eqref{HeatDBC}--\eqref{HeatNBC} has a unique weak solution satisfying
\[
\Theta\in L^\infty(0,T;L^2(\Omega))\cap L^2(0,T;H^1(\Omega))
\cap W^{1,1}(0,T;H^{-1}(\Omega)).
\]
For almost every \(t\in(0,T)\),
\[
\frac12\frac{d}{dt}\|\Theta-\Theta_D\|_{L^2(\Omega)}^2
+\frac1{Pe_\Theta}\|\nabla(\Theta-\Theta_D)\|_{L^2(\Omega)}^2=0.
\]
In particular, by restricting to \(T\le1\), there is a constant
\(M_\Theta=M_\Theta(R,\Theta_0,\Theta_D,Pe_\Theta,\Omega)\), independent of the choice of \(\mathbf u\) in the radius-\(R\) ball of \(E_s(T)\), such that
\[
\|\Theta\|_{X_\Theta(T)}\le M_\Theta,
\]
where $X_\Theta(T)$ is defined by \eqref{Xthetadef}.
\end{proposition}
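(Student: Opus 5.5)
The plan is to homogenize the Dirichlet data and run a Galerkin scheme for a linear transport–diffusion equation with rough advecting field. Since \(\Theta_D\) is a constant, I will set \(w:=\Theta-\Theta_D\). Because \(\mathbf u\cdot\nabla\Theta_D=0\) and \(\Delta\Theta_D=0\), \(w\) solves
\[
\partial_t w+\mathbf u\cdot\nabla w=\tfrac1{Pe_\Theta}\Delta w,\qquad w|_{\Gamma_\Theta}=0,\quad \partial_{\mathbf n}w|_{\Gamma_{\Theta_N}}=0,\quad w(0)=\Theta_0-\Theta_D\in H^1_{\Gamma_\Theta}(\Omega).
\]
The natural test space is \(H^1_{\Gamma_\Theta}(\Omega)\), the \(H^1\) functions vanishing on \(\Gamma_\Theta\). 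I will write the convective term in the weak form \(b(\mathbf u;w,\varphi)=\int_\Omega(\mathbf u\cdot\nabla w)\varphi\,dx\). I will take a Galerkin basis of eigenfunctions of the mixed Laplacian on \(H^1_{\Gamma_\Theta}\), solve the resulting linear ODE system (its coefficients are \(L^2\) in time, so Carathéodory theory applies), and pass to the limit by weak compactness. Linearity makes the limit passage routine once uniform bounds and a bound on \(b\) are available.

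The key identity is \(b(\mathbf u;w,w)=0\). Formally, \(\int(\mathbf u\cdot\nabla w)w=\tfrac12\int_{\partial\Omega}(\mathbf u\cdot\mathbf n)w^2\), using \(\nabla\cdot\mathbf u=0\). The boundary integrand vanishes on \(\Gamma_\Theta\) because \(w=0\) there, and on \(\Gamma_{\Theta_N}\subseteq\Gamma_D\) because \(\mathbf u=0\) there. This is exactly where the containment assumption of Remark \ref{geometric} is used. To justify it for \(\mathbf u\in Y_s\), I will approximate \(\mathbf u\) by smooth fields in \(Y_s\), or argue on Galerkin iterates whose \(w\) is smooth, and pass to the limit using the trace. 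Testing with \(w\) then gives the stated energy identity. It yields a bound on \(w\) in \(L^\infty(L^2)\cap L^2(H^1)\) depending only on \(\|\Theta_0-\Theta_D\|\), with no dependence on \(\mathbf u\).

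The step I expect to be the main obstacle is bounding the trilinear term, which is needed both for the time derivative and for uniqueness. By Sobolev embedding, \(H^s\hookrightarrow L^{p}\) with \(p=2d/(d-2s)\); for \(d=3\) and \(s>1/2\) this gives \(p>3\). For \(\varphi\in H^1\subset L^6\),
\[
|b(\mathbf u;w,\varphi)|\le\|\mathbf u\|_{L^p}\|\nabla w\|_{L^2}\|\varphi\|_{L^q},\qquad \tfrac1p+\tfrac12+\tfrac1q=1 .
\]
Here \(q<6\), so \(\|\mathbf u\cdot\nabla w\|_{H^{-1}}\le C\|\mathbf u\|_{H^s}\|\nabla w\|\). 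The product \(\|\mathbf u\|_{H^s}\|\nabla w\|\) is in \(L^1(0,T)\) by Cauchy–Schwarz, since both factors are \(L^2\) in time. This is why the regularity is only \(W^{1,1}(0,T;H^{-1})\) rather than \(H^1(0,T;H^{-1})\), and it gives
\[
\|\partial_t\Theta\|_{L^1(H^{-1})}\le C\,(R+T^{1/2})\,\|w\|_{L^2(H^1)}.
\]
Restricting to \(T\le 1\) then gives the bound \(M_\Theta\) depending on \(R,\Theta_0,\Theta_D,Pe_\Theta,\Omega\).

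Uniqueness follows because the difference of two solutions solves the same homogeneous problem with zero initial data. The energy identity, justified for \(w\in L^2(H^1)\) with \(\partial_tw\in L^1(H^{-1})\), gives zero. The mild time regularity requires a Lions–Magenes-type argument, and I would obtain it via Steklov averages or a density argument in which the skew-symmetry passes to the limit. Weak continuity into \(L^2\) then follows, giving attainment of the initial datum. Finally, \(\Theta=w+\Theta_D\) with constant \(\Theta_D\) transfers all bounds to \(\Theta\) in \(X_\Theta(T)\).
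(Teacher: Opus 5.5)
Your plan follows the paper's proof essentially step for step: the shift $w=\Theta-\Theta_D$, a Galerkin scheme on the mixed Laplacian, the vanishing of the boundary term in $(\mathbf u\cdot\nabla w,w)$ via $\Gamma_{\Theta_N}\subseteq\Gamma_D$, the H\"older--Sobolev bound that puts $\partial_t\Theta$ only in $L^1(0,T;H^{-1})$, and uniqueness from the energy identity for the difference; the one deviation is that you pass to the limit by weak compactness, which suffices by linearity, where the paper invokes Aubin--Lions. One caution applies equally to the paper's one-line uniqueness argument and to the claimed energy identity: $\partial_t w$ is only $L^1$ in time and $\int_\Omega|\mathbf u||\nabla w||w|\,dx$ is not controlled in $L^1(0,T)$ by the available bounds, so the chain rule for $\langle\partial_t w,w\rangle$ does not follow from Lions--Magenes or from Steklov averaging with $w$ as test function; it can instead be recovered by observing $\mathbf u\cdot\nabla w\in L^1(\Omega\times(0,T))$, testing with the truncations $\max\{-k,\min\{k,w\}\}$ (for which the convective term equals $\int_\Omega\mathbf u\cdot\nabla\Phi_k(w)\,dx=0$ for a.e.\ $t$, with $\Phi_k$ the primitive of the truncation), and letting $k\to\infty$.
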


\begin{proof}
We first introduce the variable
$\widetilde\Theta=\Theta-\Theta_D$,
which satisfies homogeneous Dirichlet boundary conditions on \(\Gamma_\Theta\). Since \(\Theta_D\) is constant, \(\widetilde\Theta\) also satisfies the same equation as $\Theta$. We then apply a standard Galerkin approximation based on the mixed Laplacian to construct approximate solutions \(\widetilde\Theta_n\). Testing the resulting equation with \(\widetilde\Theta_n\) and using the incompressibility condition \(\nabla\!\cdot\mathbf u=0\), we obtain
\[
(\mathbf u\cdot\nabla\widetilde\Theta_n,\widetilde\Theta_n)
=\frac12\int_{\partial\Omega}(\mathbf u\cdot\mathbf n)|\widetilde\Theta_n|^2\,dS=0,
\]
which holds since \(\widetilde\Theta_n=0\) on \(\Gamma_\Theta\), whereas
\(\mathbf u=0\) on \(\Gamma_{\Theta_N}\subseteq\Gamma_D\). Hence
\[
\|\widetilde\Theta_n\|_{L^\infty(L^2)}^2
+\frac2{Pe_\Theta}\|\nabla\widetilde\Theta_n\|_{L^2(L^2)}^2
\le \|\Theta_0-\Theta_D\|^2.
\]

To apply the Aubin--Lions compactness theorem, it remains to obtain a bound for the time derivative. We therefore estimate each term in equation \eqref{HeatND}. For any \(\varphi\in H^1(\Omega)\) vanishing on \(\Gamma_\Theta\),
\[
|\langle\mathbf u\cdot\nabla\widetilde\Theta_n,\varphi\rangle|
\le C\|\mathbf u\|_{H^s}\|\nabla\widetilde\Theta_n\|\|\varphi\|_{H^1}.
\]
Hence, $\mathbf u\cdot\nabla\widetilde\Theta_n$ is bounded in $L^1(H^{-1})$. Moreover, the energy estimate above implies that $\widetilde\Theta_n$ is bounded in $L^2(H^1)$, and therefore $\Delta\widetilde\Theta_n$ is bounded in $L^2(H^{-1})$. It then follows from equation \eqref{HeatND} that $\partial_t\widetilde\Theta_n$ is bounded in $L^1(H^{-1})$.

The Aubin--Lions compactness theorem therefore yields strong convergence (up to a subsequence) in $L^2(L^2)$. This strong convergence allows us to pass to the limit in the Galerkin formulation, yielding the existence of a weak solution together with the stated energy identity. Finally, uniqueness follows by applying the same energy estimate to the difference of two solutions.
\end{proof}
\end{revisionfive}

The next result establishes positivity of the temperature field, which will be used to ensure nonnegativity of the mollified temperature in the Cahn--Hilliard analysis.

\begin{revisionfive}
\begin{proposition}[Nonnegativity of the temperature]\label{pro:maxprinciple}
Let \(\Theta\) be the solution from Proposition \ref{heatwell}. Under the assumptions of Remark \ref{geometric}, if \(\Theta_0\ge0\) almost everywhere and \(\Theta_D\ge0\), then
\[
\Theta\ge0\quad\text{almost everywhere in }\Omega\times(0,T).
\]
\end{proposition}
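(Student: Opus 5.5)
We use the Stampacchia truncation argument: test the heat equation with the negative part $\Theta^-:=\max\{-\Theta,0\}$ and show that $\|\Theta^-(t)\|^2$ cannot grow.

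\emph{Admissibility of the test function.} Since $\Theta_D\ge0$ is constant and $\Theta=\Theta_D$ on $\Gamma_\Theta$, we have $\Theta^-=0$ on $\Gamma_\Theta$. Moreover, $\Theta\in L^2(0,T;H^1(\Omega))$ gives $\Theta^-\in L^2(0,T;H^1(\Omega))$ with $\nabla\Theta^-=-\chi_{\{\Theta<0\}}\nabla\Theta$. Hence $\Theta^-(t)$ lies in the test space of Proposition \ref{heatwell}, namely $H^1$ functions vanishing on $\Gamma_\Theta$.

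\emph{Time derivative.} The delicate point is the identity
\[
\langle\partial_t\Theta,-\Theta^-\rangle=\tfrac12\tfrac{d}{dt}\|\Theta^-\|^2 .
\]
It is not immediate, because $\partial_t\Theta$ is only known in $L^1(H^{-1})$, not in $L^2(H^{-1})$. I would handle this in one of two ways. The first option is to observe that the equation itself shows $\partial_t\Theta\in L^2(H^{-1})$: the term $\Delta\Theta$ lies in $L^2(H^{-1})$, and for the convective term $\mathbf u\cdot\nabla\Theta$ one integrates by parts to $-(\mathbf u\Theta,\nabla\varphi)$ and uses $\mathbf u\in L^2(H^s)$ together with $\Theta\in L^\infty(L^2)$; if the resulting integrability is insufficient, one settles for the weaker duality bound. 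The cleaner second option is to run the argument at the Galerkin level, or on a time-mollified (Steklov-averaged) equation, where the chain rule for the convex $C^{1,1}$ function $r\mapsto\frac12(r^-)^2$ is classical, and then pass to the limit. The limit step uses the strong $L^2(L^2)$ convergence obtained in Proposition \ref{heatwell}, the weak lower semicontinuity of the gradient term, and the weak continuity of $\Theta$ in $L^2$. This regularity issue is the main obstacle.

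\emph{Convection term.} Since $\nabla\cdot\mathbf u=0$,
\[
(\mathbf u\cdot\nabla\Theta,-\Theta^-)=(\mathbf u\cdot\nabla\Theta^-,\Theta^-)=\tfrac12\int_{\partial\Omega}(\mathbf u\cdot\mathbf n)|\Theta^-|^2\,dS .
\]
The boundary integral vanishes: on $\Gamma_\Theta$ we have $\Theta^-=0$, and on $\Gamma_{\Theta_N}\subseteq\Gamma_D$ we have $\mathbf u=0$, by Remark \ref{geometric}. This is the same cancellation used in Proposition \ref{heatwell}. The trace of $\mathbf u\in H^s$ with $s>1/2$ is meaningful, and one may approximate $\Theta^-$ by smooth functions to justify the integration by parts.

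\emph{Conclusion.} The diffusion term gives
\[
-\tfrac1{Pe_\Theta}(\nabla\Theta,\nabla\Theta^-)=\tfrac1{Pe_\Theta}\|\nabla\Theta^-\|^2\ge0 .
\]
Combining the three steps yields $\frac12\frac{d}{dt}\|\Theta^-\|^2+\frac1{Pe_\Theta}\|\nabla\Theta^-\|^2=0$. Since $\Theta_0\ge0$, we have $\Theta^-(0)=0$, so $\Theta^-\equiv0$ almost everywhere in $\Omega\times(0,T)$, which is the claim.
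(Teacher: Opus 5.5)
Your argument is essentially the paper's proof: you test with $-\Theta^-$, rewrite the convective term as $\frac12\int_{\partial\Omega}(\mathbf u\cdot\mathbf n)|\Theta^-|^2\,dS$, kill it using $\Theta^-=0$ on $\Gamma_\Theta$ and $\mathbf u=0$ on $\Gamma_{\Theta_N}\subseteq\Gamma_D$, and conclude from $\Theta^-(0)=0$, while the paper simply performs the chain rule $\langle\partial_t\Theta,-\Theta^-\rangle=\frac12\frac{d}{dt}\|\Theta^-\|^2$ formally. One caution on the extra justification you sketch: the Galerkin-level version does not work as stated, because the Galerkin equations may only be tested with elements of $V_n$ and in general $\Theta_n^-\notin V_n$; replacing it by $P_n\Theta_n^-$ leaves the time-derivative and diffusion terms unchanged, but it destroys the exact convective cancellation.
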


\begin{proof}
Define the negative part of the temperature by
\[
\Theta^-:=\max\{-\Theta,0\}.
\]
Testing equation \eqref{HeatND} by $-\Theta^-$ gives
\[
\frac12\frac{d}{dt}\|\Theta^-\|^2
+\frac1{Pe_\Theta}\|\nabla\Theta^-\|^2
-\int_\Omega(\mathbf u\cdot\nabla\Theta)\Theta^-\,dx=0.
\]
Since $\nabla\Theta=-\nabla\Theta^-$ on the set $\{\Theta<0\}$ and $\nabla\!\cdot\mathbf u=0$, integration by parts yields
\[
\int_\Omega(\mathbf u\cdot\nabla\Theta)\Theta^-\,dx
=-\frac12\int_{\partial\Omega}(\mathbf u\cdot\mathbf n)|\Theta^-|^2\,dS.
\]
The boundary integral vanishes because $\Theta^-=0$ on $\Gamma_\Theta$ and $\mathbf u=0$ on $\Gamma_{\Theta_N}\subseteq\Gamma_D$. Since $\Theta_0\ge0$, we have $\Theta^-(0)=0$, and therefore
\[
\Theta^-\equiv0.
\]
Hence $\Theta\ge0$ almost everywhere in $\Omega\times(0,T)$.
\end{proof}
\end{revisionfive}

We conclude this subsection by establishing the continuity of the heat solution operator with respect to the prescribed velocity field. This result will be one of the key ingredients in the fixed-point argument.

\begin{revisionfive}
\begin{proposition}[Continuity of the heat solution operator]
\label{prop:H-continuity}
Let \(\mathbf u_n,\mathbf u\in E_s(T)\) satisfy
\[
\mathbf u_n\to\mathbf u
\quad\text{strongly in }E_s(T),
\]
and let $\Theta_n=\mathcal H(\mathbf u_n)$ and
 $\Theta=\mathcal H(\mathbf u)$
denote the corresponding weak solutions given by Proposition \ref{heatwell}. Then
\[
\Theta_n\to\Theta
\quad\text{strongly in }L^2(0,T;L^2(\Omega)).
\]
\end{proposition}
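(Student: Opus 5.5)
We plan to argue by compactness combined with uniqueness of the limit problem, rather than by a direct difference estimate. A direct energy estimate on \(w_n:=\Theta_n-\Theta\) would produce the term \(((\mathbf u_n-\mathbf u)\cdot\nabla\Theta, w_n)\). Controlling it needs \(\nabla\Theta\) against \(\mathbf u_n-\mathbf u\) in \(H^s\) only, and that pairing is delicate. We therefore use a subsequence argument instead.

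\emph{Step 1 (uniform bounds).} Since \(\mathbf u_n\to\mathbf u\) in \(E_s(T)\), the sequence is bounded, say \(\|\mathbf u_n\|_{E_s(T)}\le R\). Proposition \ref{heatwell} then gives \(\|\Theta_n\|_{X_\Theta(T)}\le M_\Theta\) uniformly in \(n\). In particular, \(\widetilde\Theta_n=\Theta_n-\Theta_D\) is bounded in \(L^\infty(L^2)\cap L^2(H^1_{\Gamma_\Theta})\) and \(\partial_t\Theta_n\) is bounded in \(L^1(H^{-1})\). This uses the same estimate \(|\langle\mathbf u_n\cdot\nabla\widetilde\Theta_n,\varphi\rangle|\le C\|\mathbf u_n\|_{H^s}\|\nabla\widetilde\Theta_n\|\|\varphi\|_{H^1}\) as in the proof of Proposition \ref{heatwell}.

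\emph{Step 2 (compactness).} By the Aubin--Lions--Simon lemma with \(H^1\hookrightarrow\hookrightarrow L^2\hookrightarrow H^{-1}\), an \(L^2(H^1)\) bound and an \(L^1(H^{-1})\) bound on the time derivative give relative compactness in \(L^2(L^2)\). Hence every subsequence of \((\Theta_n)\) has a further subsequence with \(\Theta_{n_k}\to\Xi\) strongly in \(L^2(L^2)\) and weakly in \(L^2(H^1)\), and with \(\Theta_{n_k}\) weak-\(*\) convergent in \(L^\infty(L^2)\).

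\emph{Step 3 (identification of the limit, the main obstacle).} We pass to the limit in the weak formulation tested with \(\varphi\in C^1_c([0,T);H^1_{\Gamma_\Theta}(\Omega))\), writing the time derivative against \(\Theta_0\). The linear terms pass directly. The transport term is the hard one, and we first rewrite it using \(\nabla\cdot\mathbf u_n=0\) and the boundary conditions:
\[
\int_0^T(\mathbf u_n\cdot\nabla\Theta_n,\varphi)\,dt=-\int_0^T(\mathbf u_n\Theta_n,\nabla\varphi)\,dt .
\]
The boundary term \(\int_{\partial\Omega}(\mathbf u_n\cdot\mathbf n)\Theta_n\varphi\) vanishes because \(\varphi=0\) on \(\Gamma_\Theta\) and \(\mathbf u_n=0\) on \(\Gamma_{\Theta_N}\subseteq\Gamma_D\).

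Next, \(H^s\hookrightarrow L^{q}\) with \(q=2d/(d-2s)>2\). So \(\mathbf u_n\to\mathbf u\) in \(L^2(L^q)\), and if needed we take \(\varphi\) smoother, e.g. \(\nabla\varphi\in L^\infty\), which is still a dense class. It then suffices to have \(\Theta_n\to\Xi\) in \(L^2(L^{q'})\) with \(q'\le2\), which Step 2 provides. The product \(\mathbf u_n\Theta_n\) is strong\(\times\)strong in \(L^2(L^q)\times L^2(L^2)\), so it converges in \(L^1(L^{1})\) and the transport term passes to \(-\int(\mathbf u\,\Xi,\nabla\varphi)\). The remaining delicacy is the time integrability, since \(\Theta_n\) is only \(L^2\) in time against \(\mathbf u_n\in L^2\) in time. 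For this we use the \(L^\infty(L^2)\) bound together with weak-\(*\) convergence, so that strong \(\mathbf u_n\) in \(L^2(L^q)\) against weak-\(*\) \(\Theta_n\) in \(L^\infty(L^2)\) (with \(q\ge2\)) still yields convergence of the integral.

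Finally, the trace condition \(\Xi=\Theta_D\) on \(\Gamma_\Theta\) is preserved under weak \(L^2(H^1)\) limits. The initial datum is recovered from the test functions with \(\varphi(0)\neq0\).

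\emph{Step 4 (conclusion).} \(\Xi\) is a weak solution of the heat problem with velocity \(\mathbf u\), so the uniqueness part of Proposition \ref{heatwell} gives \(\Xi=\Theta=\mathcal H(\mathbf u)\). Since every subsequence has a further subsequence converging to the same limit \(\Theta\), the whole sequence satisfies \(\Theta_n\to\Theta\) in \(L^2(0,T;L^2(\Omega))\).
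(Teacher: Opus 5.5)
Your proposal is correct and follows essentially the same route as the paper: uniform bounds from Proposition~\ref{heatwell}, Aubin--Lions compactness in $L^2(L^2)$, identification of the limit, the uniqueness part of Proposition~\ref{heatwell}, and the subsequence principle. The only difference is in the transport term. You put it entirely in conservative form $-(\mathbf u_n\Theta_n,\nabla\varphi)$ and pass to the limit strong$\times$strong, which is why you need smoother test functions and a density step; your extra weak-$*$ time-integrability remark is unnecessary, since $L^2\cdot L^2\subset L^1$ in time already suffices. The paper instead splits the term as $(\mathbf u_n-\mathbf u)\cdot\nabla\Theta_n+\mathbf u\cdot\nabla(\Theta_n-\widetilde\Theta)$ and integrates by parts only in the second piece.
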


\begin{proof}
By Proposition \ref{heatwell}, the sequence $\{\Theta_n\}$ is uniformly bounded in
$L^\infty(L^2)\cap L^2(H^1)$, while
$\{\partial_t\Theta_n\}$ is uniformly bounded in
$L^1(H^{-1})$. Therefore, the Aubin--Lions compactness theorem yields the existence of a subsequence, still denoted by $\{\Theta_n\}$, and a function $\widetilde\Theta$ such that
\[
\Theta_n\to\widetilde\Theta
\quad\text{strongly in }L^2(L^2).
\]

It remains to identify the limit. Passing to the weak formulation, we write the difference of the transport terms as
\[
\mathbf u_n\cdot\nabla\Theta_n-
\mathbf u\cdot\nabla\widetilde\Theta
=
(\mathbf u_n-\mathbf u)\cdot\nabla\Theta_n
+
\mathbf u\cdot\nabla(\Theta_n-\widetilde\Theta).
\]
The first term converges to zero by the strong convergence of $\mathbf u_n$ in $E_s(T)$ together with the uniform $L^2(H^1)$ bound for $\Theta_n$. For the second term, we integrate the spatial derivative onto the test function and use the strong convergence of $\Theta_n$ in $L^2(L^2)$ together with the admissible boundary conditions.

Consequently, $\widetilde\Theta$ satisfies the weak formulation of the heat equation corresponding to the velocity field $\mathbf u$. By uniqueness of weak solutions, we conclude that $\widetilde\Theta=\Theta$. Finally, the standard subsequence argument implies that the entire sequence converges, completing the proof.
\end{proof}
\end{revisionfive}

\subsection{The Cahn--Hilliard Operator}\label{sec:Cahn-Hilliard}

We now analyze the Cahn--Hilliard system for prescribed velocity and temperature fields $(\mathbf{u}, \Theta)$. The main analytical difficulty is handling the temperature-dependent coefficient $\Theta$ in the free energy, which requires spatial regularity beyond what the heat equation provides. To address this, we introduce a mollified temperature field.

For a fixed mollification parameter $\delta>0$, we extend $\Theta$ by zero outside $\Omega$, denoted by $\widetilde{\Theta}$, and define
\begin{equation}\label{thetahatdefinition}
\hat\Theta := (\rho_\delta * \widetilde{\Theta})\big|_{\Omega},
\end{equation}
where $\rho_\delta$ is a standard smooth mollifier and the convolution is taken with respect to the spatial variables only.
Then $\hat\Theta \in C^\infty(\Omega)$ for almost every $t \in (0,T)$. Moreover, 
since $\rho_\delta \in C^\infty_c(\mathbb{R}^d)$, Young's convolution 
inequality gives
\begin{equation}\label{theonewiththeta}
\|\hat\Theta\|_{W^{2,\infty}(\Omega)} 
\le C_\delta \|\Theta\|,
\end{equation}
where $C_\delta$ depends only on the mollification parameter. \begin{revisionfive}
This estimate will play an important role in the compactness arguments that follow.

We establish existence and uniqueness for prescribed inputs by Galerkin approximation, energy estimates, and compactness. We then prove sequential continuity of the solution operator
\[
\mathcal C:(\mathbf u,\Theta)\mapsto c
\]
in the topology required by the fixed-point argument.
\end{revisionfive}

\begin{revisionfive}
\begin{definition}[Weak solution]\label{def:weakdef}
Let \(T>0\), let \(\mathbf u\in E_s(T)\), and let
\(\Theta\in X_\Theta(T)\), where the spaces $E_s(T)$ and $X_{\Theta}(T)$ are given by \eqref{theonewithEs} and \eqref{Xthetadef}, respectively, with \(\Theta\ge0\) almost everywhere. A pair
\((c,\mu)\) is a weak solution of the prescribed-input Cahn--Hilliard problem on \((0,T)\) if
\[
c\in L^\infty(0,T;H^1(\Omega))\cap L^2(0,T;H^2(\Omega)),
\quad
\partial_t c\in L^2(0,T;H^{-1}(\Omega)),
\quad
\mu\in L^2(0,T;H^1(\Omega)),
\]
and, for all admissible test functions \(\varphi,\psi\in L^2(0,T;H^1(\Omega))\),
\begin{align}
\int_0^T\left[
\langle\partial_t c,\varphi\rangle_{H^{-1},H^1}
+(\mathbf u\cdot\nabla c,\varphi)
+\frac1{Pe}(\nabla\mu,\nabla\varphi)
\right]dt=0,&\label{weakofc}\\[0.3em]
\int_0^T(\mu,\psi)\,dt
=\int_0^T\left[
(8c^3-2(1-\hat\Theta)c,\psi)
+Ch^2(\nabla c,\nabla\psi)
\right]dt.&\label{weakofmi}
\end{align}
The initial condition is \(c(0)=c_0\).
\end{definition}
\end{revisionfive}

\begin{revisionfive}
\begin{proposition}[Existence and regularity of weak solutions]
\label{prop:CH_exist}
Let \(\mathbf u\in E_s(T)\) and \(\Theta\in X_\Theta(T)\) satisfy
\(\Theta\ge0\) almost everywhere and
\[
\|\mathbf u\|_{E_s(T)}+\|\Theta\|_{X_\Theta(T)}\le R.
\]
For every fixed \(\delta>0\), there exists \(T_{CH}>0\), depending only on the fixed parameters, \(R\), \(\delta\), and \(c_0\), such that the prescribed-input Cahn--Hilliard problem has a weak solution in the sense of Definition \ref{def:weakdef} on \((0,T_{CH})\).
\end{proposition}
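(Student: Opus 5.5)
We follow the standard Faedo--Galerkin route: build approximate solutions, obtain $n$-independent energy-type estimates up to a short time $T_{CH}$, and pass to the limit by compactness. Let $\{w_k\}$ be the eigenfunctions of the Neumann Laplacian, orthonormal in $L^2(\Omega)$ and orthogonal in $H^1(\Omega)$; they lie in $H^2(\Omega)$ since $\Omega$ is $C^{1,1}$. We seek $c_n=\sum_{k\le n}a_k(t)w_k$ and $\mu_n=P_n\bigl(8c_n^3-2(1-\hat\Theta)c_n-Ch^2\Delta c_n\bigr)$, with $c_n(0)=P_nc_0$, satisfying \eqref{weakofc} for $\varphi\in\operatorname{span}\{w_1,\dots,w_n\}$. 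This is an ODE system whose coefficients involve $\mathbf u\in L^2(0,T;H^s)$ and $\hat\Theta$. The coefficients are Carath\'eodory in time, because \eqref{theonewiththeta} gives $\hat\Theta\in L^\infty(0,T;W^{2,\infty})$ with $\|\hat\Theta\|_{W^{2,\infty}}\le C_\delta\|\Theta\|_{L^\infty(L^2)}\le C_\delta R$. Local solvability therefore follows from the Carath\'eodory theorem, and the a priori bounds below extend the solution to a common interval.

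\textbf{Step 1: $H^1$-level estimate.} Test with $\mu_n$ and with $\partial_t c_n$, or equivalently differentiate the energy $E(c_n)=\int\bigl(2c_n^4-(1-\hat\Theta)c_n^2+\tfrac{Ch^2}{2}|\nabla c_n|^2\bigr)$. Since $\hat\Theta$ depends on time, this produces
\[
\frac{d}{dt}E+\frac1{Pe}\|\nabla\mu_n\|^2=\int\partial_t\hat\Theta\,c_n^2-(\mathbf u\cdot\nabla c_n,\mu_n).
\]
We only know $\partial_t\Theta\in L^1(H^{-1})$, and mollification gives $\|\partial_t\hat\Theta\|_{L^\infty}\le C_\delta\|\partial_t\Theta\|_{H^{-1}}$, which is integrable in time. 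The term $\int\partial_t\hat\Theta\,c_n^2$ is thus bounded by $a(t)\|c_n\|^2$ with $a\in L^1(0,T)$. The convection term is handled by integrating by parts onto $\mu_n$:
\[
(\mathbf u\cdot\nabla c_n,\mu_n)=-(c_n\mathbf u,\nabla\mu_n)\le \tfrac1{2Pe}\|\nabla\mu_n\|^2+C\|\mathbf u\|_{L^{3}}^2\|c_n\|_{L^6}^2,
\]
where the boundary term vanishes because $\mathbf u=0$ on $\Gamma_D$. It remains to control $\mathbf u\cdot\mathbf n$ on $\Gamma_N$. If this is troublesome, we instead estimate the term directly as $\|\mathbf u\|_{L^3}\|\nabla c_n\|\|\mu_n\|_{L^6}$, splitting $\mu_n$ into its mean and $\nabla\mu_n$. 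The mean of $\mu_n$ is estimated from the constitutive relation by $\|c_n\|_{L^3}^3+C_\delta R\|c_n\|$. Here $H^s\hookrightarrow L^{3}$ holds for $s\ge 1/2$ when $d=3$. The resulting inequality has the form $y'\le a(t)(1+y)^{p}$ with $a\in L^1$, where $p>1$ comes from the cubic mean term. This superlinear growth is exactly why only a short time $T_{CH}$, depending on $R,\delta,c_0$, is obtained. Since $E$ controls $\|c_n\|_{H^1}^2$ up to a lower-order term $C(1+C_\delta R)^2|\Omega|$, this yields uniform bounds for $c_n$ in $L^\infty(H^1)$ and for $\nabla\mu_n$ in $L^2(L^2)$, hence for $\mu_n$ in $L^2(H^1)$.

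\textbf{Step 2: $H^2$ and time-derivative bounds.} Elliptic regularity for the Neumann problem, applied to $Ch^2\Delta c_n=8P_n(c_n^3)-\dots-\mu_n$, gives
\[
\|c_n\|_{H^2}\le C\bigl(\|\mu_n\|+\|c_n\|_{H^1}^3+C_\delta R\|c_n\|\bigr),
\]
so $c_n$ is bounded in $L^2(H^2)$. The equation then bounds $\partial_tc_n$ in $L^2((H^1)')$ through $\|\mathbf u\cdot\nabla c_n\|_{(H^1)'}\le\|\mathbf u\|_{H^s}\|\nabla c_n\|_{L^{3}}\lesssim\|\mathbf u\|_{H^s}\|c_n\|_{H^2}^{1/2}\|c_n\|_{H^1}^{1/2}$. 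This is only $L^{4/3}$ in time in general, so to reach $L^2$ we use $\|\nabla c_n\|_{L^3}\le C\|c_n\|_{H^2}$ combined with $\|\mathbf u\|_{L^2(H^s)}$. If that gives only $L^1$, we accept $W^{1,1}$ for compactness and recover $\partial_tc\in L^2(H^{-1})$ a posteriori by duality.

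\textbf{Step 3: passage to the limit.} Aubin--Lions yields $c_n\to c$ strongly in $L^2(H^1)\cap C([0,T];L^2)$, together with weak limits of $\mu_n$ and $c_n$ in $L^2(H^1)$ and $L^2(H^2)$. The cubic term converges because $c_n^3\to c^3$ in $L^1(L^1)$ with uniform $L^2(L^2)$ bounds. The term $\hat\Theta c_n$ converges since $\hat\Theta$ is fixed and bounded, and $\mathbf u\cdot\nabla c_n\rightharpoonup\mathbf u\cdot\nabla c$ since $\nabla c_n\to\nabla c$ strongly. Weak continuity gives $c(0)=c_0$.

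The main obstacle is Step 1: closing the energy inequality with merely $\partial_t\Theta\in L^1(H^{-1})$ and $\mathbf u\in L^2(H^s)$ with $s<1$. The convective term paired with the chemical potential, including the mean of $\mu_n$ and the possible boundary flux on $\Gamma_N$, forces the superlinear Gronwall argument and hence the restriction to short time.
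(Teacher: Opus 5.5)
\textbf{The gap: time dependence of $\hat\Theta$.} Your overall architecture is the paper's: a Neumann-eigenfunction Galerkin scheme, testing with $\mu_n$, a nonlinear Gr\"onwall argument on a short interval, Neumann elliptic regularity, and Aubin--Lions. Two of your local choices are legitimate variants. Bounding $(\mathbf u\cdot\nabla c_n,\mu_n)$ by $\|\mathbf u\|_{L^3}\|\nabla c_n\|\|\mu_n\|_{L^6}$, with the mean of $\mu_n$ read off from the constitutive relation, plays the role of the paper's bound on $\|\mu_n\|$. Obtaining $L^2(H^2)$ from the elliptic estimate on the $\mu$-equation replaces the paper's second test with $c_n$.

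The genuine gap is your handling of the time dependence of $\hat\Theta$. Differentiating $E(c_n)$ produces $\int\partial_t\hat\Theta\,c_n^2$, so you need $\partial_t\hat\Theta\in L^1(0,T;L^\infty)$. Your bound $|\partial_t\hat\Theta(x,t)|\le C_\delta\|\partial_t\Theta(t)\|_{H^{-1}}$ amounts to pairing $\partial_t\Theta$ with $\rho_\delta(x-\cdot)|_\Omega$. That is admissible only if $H^{-1}$ is the dual of all of $H^1(\Omega)$.

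The temperatures actually fed into $\mathcal C$ come from Proposition \ref{heatwell}. There $\partial_t\Theta$ is controlled only against test functions vanishing on $\Gamma_\Theta$. For $x$ within distance $\delta$ of $\Gamma_\Theta$, the kernel does not vanish on $\Gamma_\Theta$. Formally, $\partial_t\hat\Theta(x,t)$ then contains the Dirichlet-boundary heat flux $\frac{1}{Pe_\Theta}\int_{\Gamma_\Theta}\rho_\delta(x-y)\,\partial_{\mathbf n}\Theta\,dS_y$, which $\Theta\in L^2(H^1)$ does not control. So your Step 1 uses time regularity of $\Theta$ beyond what $\mathcal H$ delivers, and the identity for $\frac{d}{dt}E$ that opens your argument is not justified.

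\textbf{How the paper avoids it.} The paper never differentiates $\hat\Theta$ in time; it differentiates only the $\hat\Theta$-free part of the energy:
\[
\langle\partial_t c_n,\mu_n\rangle=\frac12\frac{d}{dt}\bigl(4\|c_n^2\|^2+Ch^2\|\nabla c_n\|^2\bigr)-2\langle\partial_t c_n,(1-\hat\Theta)c_n\rangle .
\]
It bounds the last pairing by $\varepsilon\|\partial_t c_n\|_{H^{-1}}^2+C_\varepsilon(1+\|\hat\Theta\|_{W^{1,\infty}}^2)\|c_n\|_{H^1}^2$. It then uses the projected estimate \eqref{FirstBigH2}, $\|\partial_t c_n\|_{H^{-1}}\le C(\|\mathbf u\|_{H^s}\|\nabla c_n\|+\|\nabla\mu_n\|)$, to absorb the resulting $\|\nabla\mu_n\|^2$. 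This needs only $\hat\Theta\in L^\infty(W^{1,\infty})$, i.e.\ $\Theta\in L^\infty(L^2)$. Inserting it into your Step 1 repairs the argument.

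\textbf{Step 2.} Estimate \eqref{FirstBigH2} is also what your Step 2 needs. Use the split $\mathbf u\in L^3$, $\nabla c_n\in L^2$, test function in $L^6$, which you already use in Step 1. This gives $\|\mathbf u\cdot\nabla c_n\|_{H^{-1}}\le C\|\mathbf u\|_{H^s}\|\nabla c_n\|$, which is $L^2$ in time directly. Routing through $\|\nabla c_n\|_{L^3}$ is what cost you integrability.

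\textbf{Uniformity of $T_{CH}$.} The fixed-point argument on a ball requires $T_{CH}$ to depend only on $R$. For that, do not lump everything into $y'\le a(t)(1+y)^p$ with $a\in L^1$. Keep track of the following:
\begin{itemize}
\item The terms with coefficient $\|\mathbf u\|_{H^s}^2$ are linear in $y$, so they are handled by an exponential factor bounded by $e^{CR^2}$.
\item The superlinear terms carry either a constant coefficient or $\|\mathbf u\|_{H^s}$, and $\int_0^t\|\mathbf u\|_{H^s}\le\sqrt t\,R$.
\end{itemize}
Otherwise the existence time depends on how $\|\mathbf u\|_{H^s}^2$ is distributed in time, not just on $R$. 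The paper's displayed differential inequality is loose in the same way.
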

\end{revisionfive}

\begin{proof}[Proof of Proposition \ref{prop:CH_exist}]

\noindent\textbf{Step 1: Galerkin approximation.}
We construct a Galerkin approximation by projecting the system onto a finite-dimensional space. Let
$\{\phi_k\}_{k=1}^\infty$ be the eigenfunctions of the Neumann Laplacian satisfying
$\partial_n\phi_k=0$ on $\partial\Omega$ \cite{Evans}, and define
\[
V_n=\operatorname{span}\{\phi_1,\dots,\phi_n\}.
\]
Since the Neumann eigenfunctions belong to $H^2(\Omega)$, we have
\[
V_n\subset H^2(\Omega)\subset H^1(\Omega).
\]

We seek approximate solutions of the form
\[
c_n(t,x)=\sum_{j=1}^n g_j(t)\phi_j(x),\quad
\mu_n(t,x)=\sum_{j=1}^n h_j(t)\phi_j(x),
\]
which satisfy, for every $\phi,\psi\in V_n$,
\begin{align}
\langle \partial_t c_n,\phi\rangle_{H^{-1},H^1}
+(\mathbf u\!\cdot\!\nabla c_n,\phi)
+\frac1{Pe}(\nabla\mu_n,\nabla\phi)=0,
&\label{weak}\\[0.3em]
(\mu_n,\psi)
=
(8c_n^3-2(1-\hat\Theta)c_n,\psi)
+
Ch^2(\nabla c_n,\nabla\psi).
&\label{definitionofmu_n}
\end{align}
Here, $\hat{\Theta}$ denotes the regularized temperature introduced in \eqref{thetahatdefinition}.

The initial condition is prescribed as
\[
c_n(0) = P_n c_0,
\]
where $P_n$ denotes the $L^2$-projection onto $V_n$.
Since the coefficients are measurable in time and locally Lipschitz in the Galerkin variables, standard ODE theory guarantees a unique local solution on some interval \((0,t^*(n))\).

We now derive uniform estimates independent of $n$, starting with a bound for the time derivative.

\medskip
\begin{revisionfive}
\noindent\textbf{Estimate for \(\partial_t c_n\) in \(H^{-1}\).}
Since the Galerkin formulation is valid only for test functions in \(V_n\), we first project arbitrary test functions onto the Galerkin space. Let \(P_n\) denote the spectral projection onto \(V_n\). Since the Neumann eigenfunctions diagonalize the Neumann Laplacian, \(P_n\) is uniformly bounded on \(H^1(\Omega)\). Moreover, \(\partial_t c_n\in V_n\), and therefore, for every \(\phi\in H^1(\Omega)\),
\[
\langle\partial_t c_n,\phi\rangle
=
\langle\partial_t c_n,P_n\phi\rangle.
\]
Using \eqref{weak} with \(P_n\phi\), H\"older's inequality, and
\[
H^s(\Omega)\hookrightarrow L^{6/(3-2s)}(\Omega),
\quad
H^1(\Omega)\hookrightarrow L^{3/s}(\Omega),
\]
we obtain
\[
|\langle\mathbf u\cdot\nabla c_n,P_n\phi\rangle|
\le C\|\mathbf u\|_{H^s}\|\nabla c_n\|\|\phi\|_{H^1},
\quad
|(\nabla\mu_n,\nabla P_n\phi)|
\le C\|\nabla\mu_n\|\|\phi\|_{H^1}.
\]
Consequently,
\begin{equation}\label{FirstBigH2}
\|\partial_t c_n\|_{H^{-1}}
\le C\left(\|\mathbf u\|_{H^s}\|\nabla c_n\|
+\|\nabla\mu_n\|\right).
\end{equation}
\end{revisionfive}
\revfive{The projected estimate above is used throughout the remainder of the proof.}

\medskip
We next test the weak formulation with $\mu_n$ to control $\|\nabla\mu_n\|$ and $\|\nabla c_n\|$.

\medskip
\noindent\textbf{Multiplying by $\mu_n$.}
Testing \eqref{weak} with $\mu_n \in V_n$ and using \eqref{definitionofmu_n}, we treat each term separately.

\medskip
\noindent\textit{Time-dependent term.}
We obtain
\[
\textcolor{revisionfivecolor}{
\langle \partial_t c_n, \mu_n \rangle_{H^{-1},H^1}}
=
\frac12\frac{d}{dt}\big(4\|c_n^2\|^2+Ch^2\|\nabla c_n\|^2\big)
\textcolor{revisionfivecolor}{
-2\langle \partial_t c_n,(1-\hat{\Theta})c_n\rangle_{H^{-1},H^1}.}
\]
\textcolor{revisionfivecolor}{Using the duality pairing between $H^{-1}(\Omega)$ and $H^1(\Omega)$, we obtain}
\[
\textcolor{revisionfivecolor}{
\begin{aligned}
\big|\langle \partial_t c_n,(1-\hat{\Theta})c_n\rangle_{H^{-1},H^1}\big|
\le\;&
\|\partial_t c_n\|_{H^{-1}}
\|(1-\hat{\Theta})c_n\|_{H^1}.
\end{aligned}}
\]

\begin{revisionfive}
For the \(H^1\)-norm, the product rule gives
\[
\nabla\big((1-\hat\Theta)c_n\big)
=(1-\hat\Theta)\nabla c_n-c_n\nabla\hat\Theta.
\]
Therefore, using \(\hat\Theta\in W^{1,\infty}(\Omega)\),
\end{revisionfive}
\[
\textcolor{revisionfivecolor}{
\begin{aligned}
\|(1-\hat{\Theta})c_n\|_{H^1}
\le\;
C\bigl(1+\|\hat{\Theta}\|_{W^{1,\infty}}\bigr)
\|c_n\|_{H^1}.
\end{aligned}}
\]

\textcolor{revisionfivecolor}{Combining the above estimate with Young's inequality, we conclude}
\[
\textcolor{revisionfivecolor}{
\begin{aligned}
\big|\langle \partial_t c_n,(1-\hat{\Theta})c_n\rangle_{H^{-1},H^1}\big|
\le\;
\varepsilon\|\partial_t c_n\|_{H^{-1}}^2+
C_{\varepsilon}\bigl(1+\|\hat{\Theta}\|_{W^{1,\infty}}^2\bigr)
\|c_n\|_{H^1}^2.
\end{aligned}}
\]

\medskip
\noindent\textit{Convective term.}
\revfive{Using the same H\"older--Sobolev estimate as in \eqref{FirstBigH2}, we obtain}
\begin{align*}
|(\mathbf{u}\!\cdot\!\nabla c_n,\mu_n)|
&\le
C_\varepsilon\|\mathbf u\|_{H^s}^2\|\nabla c_n\|^2
+\varepsilon\|\mu_n\|_{H^1}^2 =
C_\varepsilon\|\mathbf u\|_{H^s}^2\|\nabla c_n\|^2
+\varepsilon\|\mu_n\|^2
+\varepsilon\|\nabla\mu_n\|^2.
\end{align*}
\textcolor{revisionfivecolor}{Taking $\psi=\mu_n$ in \eqref{definitionofmu_n}, applying Young's inequality, and using the Sobolev embedding $H^1(\Omega)\hookrightarrow L^6(\Omega)$, we deduce}
\[
\textcolor{revisionfivecolor}{
\|\mu_n\|^2
\le
\varepsilon\|\nabla\mu_n\|^2
+
C_\varepsilon\left(
\|c_n\|_{H^1}^2
+
\|c_n\|_{H^1}^6
\right).
}
\]
\textcolor{revisionfivecolor}{Substituting this estimate into the previous inequality, we conclude that}
\[
|(\mathbf{u}\!\cdot\!\nabla c_n,\mu_n)|
\le
C(\|\mathbf{u}\|_{H^s},\|\hat{\Theta}\|_{W^{2,\infty}})
(\|c_n\|_{H^1}^2+\|c_n\|_{H^1}^6)
+\varepsilon\|\nabla\mu_n\|^2.
\]

\medskip
\noindent\textit{Dissipative term.}
The remaining term satisfies
\[
\frac{1}{Pe}(\nabla\mu_n,\nabla\mu_n)=\frac{1}{Pe}\|\nabla\mu_n\|^2.
\]

\medskip
\noindent\textit{Combined estimate.}
Combining the above bounds and using \eqref{FirstBigH2}, the term $\|\nabla\mu_n\|^2$ can be absorbed, yielding
\begin{align}\label{secondbig}
&\frac12\frac{d}{dt}\big(4\|c_n^2\|^2+Ch^2\|\nabla c_n\|^2\big)
+ C\|\nabla\mu_n\|^2 \nonumber \\[0.3em]
&~~~~\le  C(\|\mathbf{u}\|_{H^s},\|\hat{\Theta}\|_{W^{2,\infty}})
(\|c_n\|_{H^1}^2+\|c_n\|_{H^1}^6).
\end{align}

\medskip
We now test the weak formulation with $c_n$ to obtain a complementary estimate, which, together with previous estimates, provides, among other things, control of the full $H^1(\Omega)$-norm of $c_n$.

\noindent\textbf{Multiplying by $c_n$.}
Testing \eqref{weak} with $c_n \in V_n$ and integrating by parts yields
\begin{align}\label{cnmult1}
   \frac{1}{2}\frac{d}{dt}\|c_n\|^2
   + \frac{Ch^2}{Pe}\|\Delta c_n\|^2
   &= -(\mathbf{u}\!\cdot\!\nabla c_n, c_n)
   - \frac{1}{Pe}(\nabla(8c_n^3 - 2(1-\hat{\Theta})c_n), \nabla c_n).
\end{align}
Expanding the nonlinear gradient gives
\[
\nabla(8c_n^3 - 2(1-\hat{\Theta})c_n)
= 24c_n^2\nabla c_n - 2(1-\hat{\Theta})\nabla c_n
+ 2(\nabla \hat{\Theta})\,c_n.
\]
Substitution yields
\begin{align}\label{preestimate}
   \frac{1}{2}\frac{d}{dt}\|c_n\|^2
   + \frac{Ch^2}{Pe}\|\Delta c_n\|^2
   + \frac{24}{Pe}\|c_n\nabla c_n\|^2
   + \frac{2}{Pe}(\hat{\Theta}\nabla c_n,\nabla c_n)
   = \frac{2}{Pe}\|\nabla c_n\|^2 + I_1 + I_2,
\end{align}
where
\[
I_1 := -(\mathbf{u}\!\cdot\!\nabla c_n,\,c_n),
\quad
I_2 := -\frac{2}{Pe}(\nabla\hat{\Theta}\,c_n,\,\nabla c_n).
\]

The term $(\hat{\Theta}\nabla c_n,\nabla c_n)$ on the left-hand side is nonnegative. This follows from the nonnegativity of the prescribed temperature $\Theta$ (established in Proposition \ref{pro:maxprinciple}), a property preserved by extension by zero and convolution with a nonnegative mollifier, and hence $\hat{\Theta}\ge 0$.

\medskip
\noindent
\textit{Estimates for $I_1$ and $I_2$.}
Applying Hölder's and Young's inequalities to the terms $I_1$ and $I_2$ defined above, we obtain
\begin{align*}
|I_1| &\le \|\mathbf{u}\|_{H^s}\,\|c_n\nabla c_n\|
      \le \varepsilon\|c_n\nabla c_n\|^2
      + C_{\varepsilon}\|\mathbf{u}\|_{H^s}^2,\\[0.3em]
|I_2| &\le \frac{2}{Pe}\|c_n\nabla c_n\|\,\|\nabla \hat{\Theta}\|
      \le \varepsilon\|c_n\nabla c_n\|^2
      + C_{\varepsilon}\|\nabla \hat{\Theta}\|^2.
\end{align*}

Combining these bounds with \eqref{preestimate}, dropping the nonnegative term $(\hat{\Theta}\nabla c_n,\nabla c_n)$ from the left-hand side, and choosing $\varepsilon_1,\varepsilon_2$ sufficiently small, we obtain
\begin{align}\label{ThirdBig}
   \frac{1}{2}\frac{d}{dt}\|c_n\|^2
   + \frac{Ch^2}{Pe}\|\Delta c_n\|^2
   + C_3\|c_n\nabla c_n\|^2
   \le C(\|\mathbf{u}\|_{H^s},\|\nabla \hat{\Theta}\|)
   + \frac{2}{Pe}\|\nabla c_n\|^2,
\end{align}
where $C_3>0$ absorbs the small parameters $\varepsilon_1$ and $\varepsilon_2$.

Together with the previous estimates, \eqref{ThirdBig} provides the differential inequality from which uniform $n$-independent bounds for $c_n$ and $\mu_n$ are obtained.

\medskip
\noindent\textbf{Step 2: Uniform bounds.}
Using \eqref{secondbig} and \eqref{ThirdBig}, together with standard absorption arguments and Young's inequality, we obtain
\begin{align}\label{finalboundexistence}
&\frac12\frac{d}{dt}E_n(t)
+C_1\big(\|\nabla\mu_n\|^2+\|\Delta c_n\|^2+\|c_n\nabla c_n\|^2\big)
\nonumber\\[0.3em]
&\qquad\le
C(\|\mathbf u\|_{H^s},\|\hat\Theta\|_{W^{2,\infty}})
\bigl(1+\|c_n\|_{H^1}^2+\|c_n\|_{H^1}^6\bigr),
\end{align}
where
\[
E_n(t)
:=
4\|c_n^2(t)\|^2
+
Ch^2\|\nabla c_n(t)\|^2
+
\|c_n(t)\|^2.
\]

Since $c_n(0)=P_nc_0$, the projection property together with the embedding
$H^2(\Omega)\hookrightarrow L^\infty(\Omega)$ implies that $E_n(0)$ is bounded independently of $n$. Moreover,
\begin{revisionfive}
\[
\|\mathbf u\|_{E_s(T)}+
\|\Theta\|_{X_\Theta(T)}
\le R.
\]
\end{revisionfive}
\begin{revisionfive}
Furthermore, the spatial mollification satisfies
\[
\|\hat\Theta(t)\|_{W^{2,\infty}(\Omega)}
\le C_\delta\|\Theta(t)\|
\quad\text{for a.e. }t.
\]
Hence, the $L^\infty(L^2)$ bound for $\Theta$ implies that
$\hat\Theta$ is uniformly bounded in
$L^\infty(W^{2,\infty})$. Therefore, the dependence on
$\hat\Theta$ can be absorbed into the constant in
\eqref{finalboundexistence}, yielding
\[
\frac{d}{dt}E_n(t)+c_0\mathcal D_n(t)
\le C_{R,\delta}\bigl(1+\|\mathbf u(t)\|_{H^s}^2\bigr)
\bigl(1+E_n(t)^3\bigr),
\]
where
\[
\mathcal D_n
=
\|\nabla\mu_n\|^2
+
\|\Delta c_n\|^2
+
\|c_n\nabla c_n\|^2.
\]
Since $\mathbf u\in E_s(T)$, with $E_s(T)$ defined in \eqref{theonewithEs}, we have
$1+\|\mathbf u\|_{H^s}^2\in L^1(0,T)$. Therefore, a nonlinear version of
Gr\"onwall's inequality \cite{Dragomir2003} yields a time $T_{CH}>0$, depending only on $R$,
$\delta$, and the initial energy, such that $E_n$ remains uniformly bounded
on $(0,T_{CH})$. Integrating the same inequality gives uniform bounds for \(\mathcal D_n\) in \(L^1(0,T_{CH})\).

\end{revisionfive} \textcolor{revisionfivecolor}{Following this, integrating \eqref{finalboundexistence} over $(0,T_{CH})$ immediately gives
\[
\nabla\mu_n,\;
\Delta c_n,\;
c_n\nabla c_n
\in
L^2(0,T_{CH};L^2(\Omega))
\]
uniformly in $n$. Furthermore, the estimate obtained from
\eqref{definitionofmu_n} implies
\[
\|\mu_n\|^2
\le
\varepsilon\|\nabla\mu_n\|^2
+
C_\varepsilon
\left(
\|c_n\|_{H^1}^2
+
\|c_n\|_{H^1}^6
\right),
\]
and therefore $\mu_n
\in
L^2(0,T_{CH};H^1(\Omega))$ uniformly in $n$. Lastly, since
\[
\Delta c_n\in L^2(0,T_{CH};L^2(\Omega))
\quad\text{and}\quad
c_n\in L^\infty(0,T_{CH};H^1(\Omega)),
\]
\revfive{the Neumann elliptic estimate for functions satisfying the homogeneous Neumann condition gives \cite{Grisvard}
\[
\|c_n\|_{H^2}
\le
C\bigl(\|\Delta c_n\|+\|c_n\|_{H^1}\bigr)
\]}
so that
\[
c_n
\in
L^2(0,T_{CH};H^2(\Omega))
\]
uniformly in $n$.
Consequently,
\[
c_n
\in
L^\infty(0,T_{CH};H^1(\Omega))
\cap
L^2(0,T_{CH};H^2(\Omega)),
\]
\[
\mu_n
\in
L^2(0,T_{CH};H^1(\Omega)),
\quad
\partial_t c_n
\in
L^2(0,T_{CH};H^{-1}(\Omega)),
\]
with bounds independent of $n$. These estimates provide the compactness required to pass to the limit in the Galerkin approximation.}

\medskip \textcolor{revisionfivecolor}{
\noindent {\bf Step 3: Compactness and passage to the limit.}
Using the uniform bounds established above, the Aubin--Lions lemma yields the
existence of a subsequence (not relabeled) and a limit function $c$ such that
\begin{equation}\label{theonewithstrongconvergence}
c_n \to c \quad\text{strongly in }L^2(H^r), \quad \forall\, r\in[0,2).
\end{equation}
By weak compactness, we further obtain
\[
c_n \rightharpoonup c \text{ in }L^2(H^2), \quad
\partial_t c_n \rightharpoonup \partial_t c \text{ in }L^2(H^{-1}), \quad
\nabla\mu_n \rightharpoonup \nabla\mu \text{ in }L^2(L^2),
\]
for some limit function $\mu$.}

\medskip
\noindent \textcolor{revisionfivecolor}{
{\it Passage to the limit.}
It remains to pass to the limit in the Galerkin formulation. We verify convergence of each term in the weak formulation of
Definition \ref{def:weakdef}. Parts (a)--(b) establish the convergence in
\eqref{weakofc}, while parts (c)--(d) establish the convergence in
\eqref{weakofmi}.
~~\\
\noindent ({\it a}) Since
$\partial_t c_n \rightharpoonup \partial_t c
\quad\text{in }L^2(H^{-1})$ and $\nabla\mu_n \rightharpoonup \nabla\mu
\quad\text{in }L^2(L^2)$, the time-derivative and diffusive terms converge directly by the definition of weak convergence.
~~\\
\revfive{\noindent ({\it b}) For the convective term, let $\varphi\in L^\infty(H^1)$. Using the same H\"older--Sobolev estimate as in \eqref{FirstBigH2}, we obtain}
\[
|(\mathbf u\!\cdot\!\nabla(c_n-c),\varphi)|
\le
C
\|\mathbf u\|_{H^s}
\|\nabla(c_n-c)\|
\|\varphi\|_{H^1}.
\]
~~
Integrating over $(0,T)$, applying Hölder's inequality in time, and using \eqref{theonewithstrongconvergence}, we have
\[
\int_0^T
|(\mathbf u\!\cdot\!\nabla(c_n-c),\varphi)|
\,dt
\le
C
\|\mathbf u\|_{L^2(H^s)}
\|\nabla(c_n-c)\|_{L^2(L^2)}
\|\varphi\|_{L^\infty(H^1)}
\to0.
\]
Hence,
\[
(\mathbf u\!\cdot\!\nabla c_n,\varphi)
\to
(\mathbf u\!\cdot\!\nabla c,\varphi).
\]}

\noindent ({\it c}) For the cubic nonlinearity in $\mu$, we write
\[
c_n^3 - c^3 = (c_n - c)(c_n^2 + c_nc + c^2).
\]
Testing against $\psi\in L^\infty(H^1)$ and using the Sobolev embedding
$H^{3/2+\varepsilon}(\Omega)\hookrightarrow L^\infty(\Omega)$ gives
\[
|(c_n^3 - c^3,\psi)|
\le C\|c_n - c\|_{H^{3/2+\varepsilon}}
\|c_n^2 + c_nc + c^2\|\,\|\psi\|.
\]
Integrating in time and applying Hölder’s inequality yields
\[
\int_0^T |(c_n^3 - c^3,\psi)|\,dt
\le C\|c_n - c\|_{L^2(H^{3/2+\varepsilon})}
\|c_n^2 + c_nc + c^2\|_{L^2(L^2)}
\|\psi\|_{L^\infty(H^1)}.
\]
The first factor converges to zero by
\eqref{theonewithstrongconvergence}, while the remaining
terms are uniformly bounded. Consequently,
\[
(8c_n^3,\psi)\to(8c^3,\psi).
\]
\noindent ({\it d}) It remains to pass to the limit in the weak formulation defining the chemical potential. The convergence of the cubic term follows from part (c),
while the linear term converges by the strong convergence
$c_n\to c$ in $L^2(L^2)$. Finally, since
$\nabla c_n\rightharpoonup\nabla c
\quad\text{in }L^2(L^2),
$ the diffusion term converges directly by weak convergence.

Combining (a)--(d), we conclude that the pair $(c,\mu)$ satisfies the weak
formulation of the Cahn--Hilliard system in the sense of
Definition \ref{def:weakdef}.

\end{proof}

We now establish continuous dependence of the solution with respect to the input data $(\mathbf{u}, \Theta)$, which is essential for the fixed-point argument.

\begin{revisionfive}
\begin{proposition}[Uniqueness and sequential continuity of the Cahn--Hilliard solution map]
\label{prop:CH_continuity_full}
Let \(R>0\) and \(T\le T_{CH}(R)\). For every prescribed pair
\((\mathbf u,\Theta)\in E_s(T)\times X_\Theta(T)\) with
\(\Theta\ge0\) and norm bounded by \(R\), the weak solution of the prescribed-input Cahn--Hilliard problem is unique. Moreover, suppose
\[
\mathbf u_n\to\mathbf u\quad\text{strongly in }E_s(T),
\quad
\Theta_n\to\Theta\quad\text{strongly in }L^2(0,T;L^2(\Omega)),
\]
and let $c_n=\mathcal C(\mathbf u_n,\Theta_n)$ and $c=\mathcal C(\mathbf u,\Theta)$ denote the corresponding weak solutions given by Proposition \ref{prop:CH_exist}. If the pairs \((\mathbf u_n,\Theta_n)\) are uniformly bounded in
\(E_s(T)\times X_\Theta(T)\), with \(\Theta_n\ge0\), then, for every $r\in [0,2)$,
\[
c_n\to c
\quad\text{strongly in }L^2(0,T;H^r(\Omega)).
\]
\end{proposition}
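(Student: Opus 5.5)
The plan has two parts. First, uniqueness through an $H^{-1}$-type (or $L^2$) energy estimate for the difference of two solutions. Second, continuity through uniform bounds, Aubin--Lions compactness, identification of the limit, and uniqueness.

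\emph{Uniqueness.} Let $(c_1,\mu_1)$ and $(c_2,\mu_2)$ be two weak solutions with the same data $(\mathbf u,\Theta)$. Set $w=c_1-c_2$ and $m=\mu_1-\mu_2$. Both solutions lie in $L^2(H^2)$, so $w\in L^2(H^2)$, $\partial_t w\in L^2(H^{-1})$ and $\partial_n w=0$ weakly. I would test \eqref{weakofc} for the difference with $\varphi=w$ and \eqref{weakofmi} with $\psi=-\Delta w$, which is admissible at a.e.\ time after an approximation or density argument. This mirrors the $c_n$-testing in Step 1 and gives
\[
\tfrac12\tfrac{d}{dt}\|w\|^2+\tfrac{Ch^2}{Pe}\|\Delta w\|^2
=-(\mathbf u\cdot\nabla w,w)-\tfrac1{Pe}\bigl(\nabla(8(c_1^3-c_2^3)-2(1-\hat\Theta)w),\nabla w\bigr).
\]
I would estimate the terms as follows.
\begin{itemize}
\item The convective term is bounded by $C\|\mathbf u\|_{H^s}\|\nabla w\|\|w\|_{H^1}$.
\item The cubic term is bounded after moving the derivative onto $\Delta w$: $|(c_1^3-c_2^3,\Delta w)|\le C\|w\|_{L^6}\|c_1^2+c_1c_2+c_2^2\|_{L^3}\|\Delta w\|$. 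The middle factor is controlled by $L^\infty(H^1)$.
\item The $\hat\Theta$ term is bounded using $\|\hat\Theta\|_{W^{1,\infty}}\le C_\delta\|\Theta\|$.
\end{itemize}
Interpolation $\|\nabla w\|^2\le\|w\|\|\Delta w\|$ and $\|w\|_{H^1}\le C(\|w\|+\|\Delta w\|)$ then lets me absorb $\|\Delta w\|^2$. This yields $\frac{d}{dt}\|w\|^2\le C(1+\|\mathbf u\|_{H^s}^2+\|c_i\|_{H^1}^4)\|w\|^2$ with an $L^1(0,T)$ coefficient. Since $w(0)=0$, Gr\"onwall gives $w\equiv0$, and hence $m=0$ by \eqref{weakofmi}.

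\emph{Continuity.} Because $(\mathbf u_n,\Theta_n)$ are uniformly bounded by some $R'$, the constants in Proposition~\ref{prop:CH_exist} depend only on $R'$, $\delta$ and $c_0$. The uniform bounds of Step 2 therefore hold for $c_n$ on $(0,T)$: $c_n$ is bounded in $X_c(T)$ and $\mu_n$ in $L^2(H^1)$. This is where I use $T\le T_{CH}$, taking $R$ large enough to dominate the sequence. Aubin--Lions gives a subsequence with $c_n\to\tilde c$ strongly in $L^2(H^r)$ for $r<2$, together with weak limits $c_n\rightharpoonup\tilde c$ in $L^2(H^2)$, $\partial_tc_n\rightharpoonup\partial_t\tilde c$ in $L^2(H^{-1})$ and $\mu_n\rightharpoonup\tilde\mu$ in $L^2(H^1)$. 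The new ingredients compared with Step 3 are the varying inputs.
\begin{itemize}
\item For the convection term I split $\mathbf u_n\cdot\nabla c_n-\mathbf u\cdot\nabla\tilde c=(\mathbf u_n-\mathbf u)\cdot\nabla c_n+\mathbf u\cdot\nabla(c_n-\tilde c)$. The first piece tends to zero by $\|\mathbf u_n-\mathbf u\|_{L^2(H^s)}\|c_n\|_{L^\infty(H^1)}$, and the second by strong convergence.
\item For the temperature term, linearity of the mollification and \eqref{theonewiththeta} give $\|\hat\Theta_n-\hat\Theta\|_{L^2(W^{2,\infty})}\le C_\delta\|\Theta_n-\Theta\|_{L^2(L^2)}\to0$. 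Then $(\hat\Theta_nc_n-\hat\Theta\tilde c,\psi)\to0$ for $\psi\in L^\infty(H^1)$, using the $L^\infty(L^2)$ bound on $c_n$.
\item Nonnegativity passes to the limit, so $\Theta\ge0$.
\item The cubic and time-derivative terms are handled exactly as in Step 3(a),(c).
\end{itemize}
The initial condition $c_n(0)=c_0=\tilde c(0)$ is preserved because $c_n\to\tilde c$ in $C([0,T];H^{-1})$. Hence $(\tilde c,\tilde\mu)$ is a weak solution with data $(\mathbf u,\Theta)$, and by uniqueness $\tilde c=c$. Every subsequence has a further subsequence converging to the same limit $c$, so the whole sequence converges in $L^2(H^r)$ for every $r\in[0,2)$.

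\emph{Main obstacle.} I expect the main difficulty to be the uniqueness estimate, in particular the rigorous justification of testing with $-\Delta w$ at the weak-solution level. If that proves delicate, I would instead test the difference equation with $\mathcal N^{-1}(w-\bar w)$, the inverse Neumann Laplacian, exploiting that the mean of $w$ is conserved because $\nabla\cdot\mathbf u=0$ and the boundary flux vanishes. The cubic term is then monotone: $(c_1^3-c_2^3,w)\ge0$. The convective term is $(\mathbf u w,\nabla\mathcal N^{-1}w)$, which is controlled by the $H^s$ norm of $\mathbf u$ and interpolation between $H^{-1}$ and $H^1$.
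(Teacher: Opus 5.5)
Your proposal is correct and follows the paper's proof essentially step for step. For uniqueness it uses an $L^2$ estimate for the difference: you test the $c$-equation by $w$ and substitute the relation for the chemical potential, which produces the dissipation $\frac{Ch^2}{Pe}\|\Delta w\|^2$, and close with Gr\"onwall. For continuity it uses Aubin--Lions compactness, the same splitting $(\mathbf u_n-\mathbf u)\cdot\nabla c_n+\mathbf u\cdot\nabla(c_n-\tilde c)$, the mollifier bound \eqref{theonewiththeta}, and uniqueness to pass from subsequences to the full sequence.

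The only technical deviation is the convective term, and your version works. The paper uses Gagliardo--Nirenberg to get powers $q_i<2$ of $\|\mathbf u\|_{H^s}$. Your interpolation $\|\nabla w\|^2\le\|w\|\|\Delta w\|$ followed by a weighted Young inequality gives the coefficient $\|\mathbf u\|_{H^s}^2$ instead, which is still in $L^1(0,T)$ and is enough. Your $H^{-1}$ fallback is unnecessary, and its claim that the mean of $w$ is conserved would fail here anyway, since $\mathbf u\cdot\mathbf n$ need not vanish on the traction boundary $\Gamma_N$.
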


\begin{proof}

\noindent\textbf{Step 1: Uniqueness.} Let $(c_1,\mu_1)$ and $(c_2,\mu_2)$ be two weak solutions corresponding to the same prescribed pair $(\mathbf u,\Theta)$ and the same initial datum. Define
\[
e:=c_1-c_2,
\qquad
q:=\mu_1-\mu_2.
\]
Then $(e,q)$ satisfies
\begin{equation}\label{theonewithe}
\partial_t e+\mathbf u\cdot\nabla e
=\frac1{Pe}\Delta q,
\end{equation}
together with
\[
q
=
8(c_1^3-c_2^3)
-
2(1-\hat\Theta)e
-
Ch^2\Delta e.
\]

We begin by establishing two auxiliary estimates that will be used repeatedly throughout the proof. For the cubic nonlinearity, we observe that
\[
c_1^3-c_2^3
=
(c_1-c_2)(c_1^2+c_1c_2+c_2^2).
\]
Using H\"older's inequality together with the embedding
$H^1(\Omega)\hookrightarrow L^6(\Omega)$, the uniform
$L^\infty(H^1)$ bounds from Proposition
\ref{prop:CH_exist} imply
\begin{equation}
    \label{theonewithcubes}
    \|c_1^3-c_2^3\|
\le
C
\left(
\|c_1\|_{H^1}^2
+
\|c_2\|_{H^1}^2
\right)
\|e\|_{H^1}.
\end{equation}

Next, we derive an interpolation estimate for $e$. By the interpolation inequality together with the Neumann elliptic estimate \cite{Grisvard},
\[
\|e\|_{H^1}
\le
C
\|e\|^{1/2}
\|e\|_{H^2}^{1/2}
\quad \text{ and } \quad
\|e\|_{H^2}
\le
C\bigl(\|\Delta e\|+\|e\|_{H^1}\bigr).
\]
Therefore,
\[
\|e\|_{H^1}^2
\le
C\|\Delta e\|\|e\|
+
C\|e\|_{H^1}\|e\|.
\]
Applying Young's inequality to both terms and absorbing the resulting
$\|e\|_{H^1}^2$ contribution into the left-hand side yields
\begin{equation}\label{H1interpolationdifference}
\|e\|_{H^1}^2
\le
\varepsilon\|\Delta e\|^2
+
C_\varepsilon\|e\|^2.
\end{equation}

We now test \eqref{theonewithe} by $e$.
Using the homogeneous Neumann boundary condition and the expression for $q$, we obtain
\begin{align*}
\frac12\frac{d}{dt}\|e\|^2
+
\frac{Ch^2}{Pe}\|\Delta e\|^2
={}&
-(\mathbf u\cdot\nabla e,e)
+\frac8{Pe}(c_1^3-c_2^3,\Delta e)
-
\frac2{Pe}((1-\hat\Theta)e,\Delta e).
\end{align*}

It remains to estimate each term on the right-hand side.

\medskip

We begin with the cubic term. Using
\eqref{theonewithcubes},
\eqref{H1interpolationdifference},
and Young's inequality, we obtain
\[
\begin{aligned}
|(c_1^3-c_2^3,\Delta e)|
\le
\varepsilon\|\Delta e\|^2
+
C_\varepsilon\|e\|_{H^1}^2 \le
\varepsilon\|\Delta e\|^2
+
C_\varepsilon\|e\|^2.
\end{aligned}
\]

For the linear temperature term, the regularity
$\hat\Theta\in W^{2,\infty}(\Omega)$
together with Young's inequality yields
\[
|((1-\hat\Theta)e,\Delta e)|
\le
\varepsilon\|\Delta e\|^2
+
C_\varepsilon\|e\|^2.
\]

Lastly, to estimate the transport term, we proceed as follows. 
As in the derivation of \eqref{FirstBigH2}, H\"older's inequality 
together with the Sobolev embeddings yields
\[
|(\mathbf u\cdot\nabla e,e)|
\le
C
\|\mathbf u\|_{H^s}
\|\nabla e\|
\|e\|_{L^{3/s}}.
\]
A direct application of the interpolation estimate
\eqref{H1interpolationdifference} to bound
$\|e\|_{L^{3/s}}$ by $\|e\|_{H^1}$ would produce the factor
$\|\mathbf u\|_{H^s}$ in front of the absorbable term
$\|\Delta e\|^2$. Since
$\mathbf u\in L^2(H^s)$, this coefficient cannot be controlled 
uniformly in time. We therefore estimate the $L^{3/s}$ norm 
directly by Gagliardo--Nirenberg interpolation 
\[
\|e\|_{L^{3/s}}
\le
C
\|e\|_{H^2}^{\frac{3-2s}{4}}
\|e\|^{\frac{1+2s}{4}}.
\]
Combining the Neumann elliptic estimate with 
\eqref{H1interpolationdifference} gives 
$\|e\|_{H^2}\le C(\|\Delta e\|+\|e\|)$, and therefore
\[
\|e\|_{L^{3/s}}
\le
C
\left(
\|\Delta e\|^{\frac{3-2s}{4}}
\|e\|^{\frac{1+2s}{4}}
+
\|e\|
\right).
\]
Similarly, from the interpolation inequality 
$\|e\|_{H^1}\le C\|e\|^{1/2}\|e\|_{H^2}^{1/2}$ 
and the same bound for $\|e\|_{H^2}$,
\[
\|\nabla e\|
\le
C\|e\|_{H^1}
\le
C
\left(
\|\Delta e\|^{1/2}
\|e\|^{1/2}
+
\|e\|
\right).
\]
Substituting these estimates and expanding the product yields
\[
|(\mathbf u\cdot\nabla e,e)|
\le
C\|\mathbf u\|_{H^s}
\left(
\|\Delta e\|^{\frac{5-2s}{4}}\|e\|^{\frac{3+2s}{4}}
+
\|\Delta e\|^{1/2}\|e\|^{3/2}
+
\|\Delta e\|^{\frac{3-2s}{4}}\|e\|^{\frac{5+2s}{4}}
+
\|e\|^2
\right).
\]
Applying Young's inequality to each term so that every power 
of $\|\Delta e\|$ is raised to 2 to be absorbed into the 
left-hand side, we obtain
\[
|(\mathbf u\cdot\nabla e,e)|
\le
\varepsilon\|\Delta e\|^2
+
C_\varepsilon
\bigl(1+\|\mathbf u\|_{H^s}^{q_1}
+\|\mathbf u\|_{H^s}^{q_2}
+\|\mathbf u\|_{H^s}^{q_3}
+\|\mathbf u\|_{H^s}\bigr)
\|e\|^2,
\]
where \[q_1=\frac{8}{3+2s}, \quad q_2=\frac{4}{3}, \quad 
q_3=\frac{8}{5+2s}.\] 
Since $q_1,q_2,q_3<2$ with $1/2<s<1$ and 
$\mathbf u\in L^2(H^s)$, H\"older's inequality  implies that 
$\|\mathbf u\|_{H^s}^{q_i}\in L^1(0,T)$ for $i=1,2,3$, 
and likewise $\|\mathbf u\|_{H^s}\in L^1(0,T)$.
Combining the previous estimates and choosing 
$\varepsilon>0$ sufficiently small yields
\[
\frac{d}{dt}\|e\|^2
+
c\|\Delta e\|^2
\le
C
\bigl(
1+\|\mathbf u\|_{H^s}^{q_1}
+\|\mathbf u\|_{H^s}^{q_2}
+\|\mathbf u\|_{H^s}^{q_3}
+\|\mathbf u\|_{H^s}
\bigr)
\|e\|^2.
\]
Since the coefficient on the right-hand side belongs to
$L^1(0,T)$, Gr\"onwall's lemma together with the initial 
condition $e(0)=0$ implies that $e\equiv0$. This completes 
the proof of uniqueness.

\medskip

\noindent\textbf{Step 2: Compactness.} Let
$c_n=\mathcal C(\mathbf u_n,\Theta_n)$. By Proposition \ref{prop:CH_exist}, the sequence $\{c_n\}$ is uniformly bounded in
\[
L^\infty(H^1)
\cap
L^2(H^2)\quad \text {and} \quad
\{\partial_t c_n\}
\subset
L^2(H^{-1}),
\quad
\{\mu_n\}
\subset
L^2(H^1).
\]
Therefore, the Aubin--Lions compactness theorem yields the existence of a subsequence, which we still denote by $\{c_n\}$, and a function $\widetilde c$ such that
\[
c_n
\to
\widetilde c
\quad\text{strongly in }
L^2(H^r),
\quad
r<2.
\]
By weak compactness, the remaining quantities also admit weakly convergent subsequences in their respective natural spaces.

\medskip
 \noindent\textbf{Step 3: Identification of the limit.}
We now verify that $\widetilde c$ satisfies the weak formulation
\eqref{weakofc}--\eqref{weakofmi} corresponding to the prescribed pair
$(\mathbf u,\Theta)$.

For the convective term, we write
\[
\mathbf u_n\cdot\nabla c_n-
\mathbf u\cdot\nabla\widetilde c
=(\mathbf u_n-\mathbf u)\cdot\nabla c_n
+\mathbf u\cdot\nabla(c_n-\widetilde c).
\]
The first term converges to zero by the strong convergence in
\(E_s(T)\) together with the uniform
\(L^\infty(H^1)\)-bound for \(c_n\), while the second converges by the strong convergence of
\(c_n\) in \(L^2(H^1)\).

Similarly,
\[
\|c_n^3-\widetilde c^{\,3}\|_{L^2(L^2)}
\le
C\bigl(\|c_n\|_{L^\infty (H^1)}^2
+\|\widetilde c\|_{L^\infty (H^1)}^2\bigr)
\|c_n-\widetilde c\|_{L^2(H^1)}
\longrightarrow0.
\]

Moreover, by \eqref{theonewiththeta},
\[
\|\hat\Theta_n-\hat\Theta\|_{L^2(W^{2,\infty})}
\le
C_\delta
\|\Theta_n-\Theta\|_{L^2(L^2)}
\longrightarrow0,
\]
and therefore
\[
(1-\hat\Theta_n)c_n
\to
(1-\hat\Theta)\widetilde c
\quad\text{strongly in }L^2(L^2).
\]

Lastly, since \(c_n(0)=c_0\) for every \(n\), the Lions--Magenes continuity theorem implies that
$c_n$ and $\widetilde c$ are in $C([0,T];L^2(\Omega))$,
and therefore $\widetilde c(0)=c_0$.

Passing to the limit in \eqref{weakofc}--\eqref{weakofmi} shows that
$\widetilde c$ is a weak solution of the prescribed-input problem corresponding to
$(\mathbf u,\Theta)$, with associated chemical potential given by the weak limit
$\widetilde\mu$ of $\{\mu_n\}$.
Since every subsequence admits a further subsequence converging to the same limit
$c=\mathcal C(\mathbf u,\Theta)$, the convergence holds for the full sequence. Hence,
\[
c_n\to c
\quad\text{strongly in }L^2(H^r),
\quad r<2,
\]
which completes the proof.
\end{proof}
\end{revisionfive}

\subsection{The Stokes Operator} \label{sec:Stokes}
\begin{revisionfive}
We now analyze the Stokes subsystem for a prescribed concentration $c$ arising from the Cahn--Hilliard analysis. More precisely, we establish the existence and regularity of weak solutions and study the continuity of the associated solution operator
\[
\mathcal S:c\mapsto\mathbf u.
\]

Before proving these results, we briefly collect the assumptions and notation used throughout this section. Outside the physical interval \([-0.5,0.5]\), we extend the viscosity and density functions to bounded positive functions satisfying
\[
\eta\in W^{1,\infty}(\mathbb R),
\quad
0<\eta_{\min}\le\eta(r)\le\eta_{\max}<\infty,
\]
and
\[
\rho\in C_b(\mathbb R),
\quad
0<\rho(r)\le\rho_{\max},
\qquad r\in\mathbb R.
\]
These extensions coincide with \eqref{DefEtac} and \eqref{DefRhoc} on the physical interval.

The weak formulation is posed in the divergence-free space
\(V_D\) and its closure
\(L^2_\sigma(\Omega)=\overline{V_D}^{L^2}\), introduced before Theorem
\ref{thm:main-existence}. We endow \(V_D\) with the norm
\[
\|\mathbf v\|_{V_D}:=\|\nabla\mathbf v\|_{L^2(\Omega)},
\]
which is equivalent to the \(H^1(\Omega)^d\)-norm by Korn's and Poincar\'e's inequalities. Since the pressure does not appear explicitly in this formulation, it is recovered afterwards by the standard de Rham--inf-sup theory. The boundary condition on \(\Gamma_N\) is the corresponding natural traction condition. If \(\Gamma_N=\varnothing\), the pressure is normalized by imposing zero mean; otherwise, the prescribed traction determines the additive constant. The mixed velocity--pressure finite-element formulation introduced later is the corresponding saddle-point formulation of the same Stokes subproblem.

Although the existence theory is naturally developed in the energy space
\(L^2(V_D)\), the continuous embedding
\[
V_D\hookrightarrow H^1(\Omega)^d
\hookrightarrow H^s(\Omega)^d,
\qquad \frac12<s<1,
\]
shows that every weak solution also belongs to
$\mathbf u\in L^2(H^s(\Omega)^d)$.

This choice of regularity is dictated by the Cahn--Hilliard analysis in Section \ref{sec:Cahn-Hilliard}, where the \(H^s\)-regularity of the velocity field is required to control the convective term.
\end{revisionfive}

\begin{revisionfive}

\begin{proposition}[Existence and regularity of weak solutions to the Stokes subproblem]
\label{prop:Stokes_exist}

Let \(c\in X_c(T)\), where \(X_c(T)\) is defined in
\eqref{Xcdef}, let the coefficient hypotheses above hold, and let
\(\mathbf u_0\in L^2_\sigma(\Omega)\). Then the prescribed-input Stokes problem
\eqref{NDStokes}
admits a unique weak solution satisfying
\[
\mathbf u\in L^\infty(0,T;L^2_\sigma(\Omega))
\cap
L^2(0,T;V_D),
\quad
\partial_t\mathbf u\in L^2(0,T;V_D').
\]
Moreover,
\[
\|\mathbf u\|_{L^\infty (L^2)}^2
+
\|\mathbf u\|_{L^2(V_D)}^2
+
\|\partial_t\mathbf u\|_{L^2(V_D')}^2
\le
C_{\alpha,\eta_{\min},\eta_{\max},\rho_{\max},\Omega}
\left(
\|\mathbf u_0\|^2
+
T\|\mathbf G\|^2
\right).
\]
Consequently,
\(\mathbf u\in E_s(T)\), where \(E_s(T)\) is defined in
\eqref{theonewithEs}, for every \(s\in(1/2,1)\).

\end{proposition}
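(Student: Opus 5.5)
Since $c$ is prescribed, the problem is linear in $\mathbf u$. We treat it as a linear parabolic problem with bounded measurable, uniformly elliptic coefficient $\eta(c(t,x))$, posed in the Gelfand triple $V_D\hookrightarrow L^2_\sigma(\Omega)\hookrightarrow V_D'$. The weak formulation reads: for all $\mathbf v\in V_D$ and a.e. $t$,
\[
\alpha\langle\partial_t\mathbf u,\mathbf v\rangle+\bigl(2\eta(c)\mathcal E(\mathbf u),\mathcal E(\mathbf v)\bigr)=\bigl(\rho(c)\mathbf G,\mathbf v\bigr),\qquad \mathbf u(0)=\mathbf u_0 .
\]
The traction condition on $\Gamma_N$ is natural, so no boundary term appears. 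The pressure is absent because $\mathbf v$ is divergence free. The bilinear form $a(t;\mathbf u,\mathbf v)=(2\eta(c)\mathcal E(\mathbf u),\mathcal E(\mathbf v))$ is measurable in $t$ because $c$ is measurable and $\eta$ is continuous. It is bounded by $2\eta_{\max}\|\nabla\mathbf u\|\|\nabla\mathbf v\|$. It is coercive, $a(t;\mathbf u,\mathbf u)\ge2\eta_{\min}\|\mathcal E(\mathbf u)\|^2\ge C_K\eta_{\min}\|\mathbf u\|_{V_D}^2$, by Korn's inequality on $\{\mathbf v\in H^1:\mathbf v=0\text{ on }\Gamma_D\}$, which holds since $|\Gamma_D|>0$.

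With these properties, existence and uniqueness follow from the J.-L. Lions theorem for abstract parabolic equations. Alternatively, one can run a Galerkin scheme on a basis of $V_D$ that is orthonormal in $L^2_\sigma$; this is the version I would write out, to parallel Sections \ref{sec:Heat}--\ref{sec:Cahn-Hilliard}. For the energy estimate, test with $\mathbf u$:
\[
\frac\alpha2\frac{d}{dt}\|\mathbf u\|^2+C_K\eta_{\min}\|\mathbf u\|_{V_D}^2\le\rho_{\max}|\mathbf G||\Omega|^{1/2}\|\mathbf u\|\le \frac{C_K\eta_{\min}}2\|\mathbf u\|_{V_D}^2+C\|\mathbf G\|^2,
\]
using Poincaré on $V_D$. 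Here $\|\mathbf G\|$ denotes the $L^2$ norm of the constant field, or equivalently $|\mathbf G|$ times a domain constant. Integrating over $(0,T)$ bounds $\|\mathbf u\|_{L^\infty(L^2)}^2+\|\mathbf u\|_{L^2(V_D)}^2$ by $C(\|\mathbf u_0\|^2+T\|\mathbf G\|^2)$. Because no Gronwall factor enters, the constant depends only on $\alpha,\eta_{\min},\rho_{\max},\Omega$.

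For the time derivative, the equation gives $\alpha\|\partial_t\mathbf u\|_{V_D'}\le 2\eta_{\max}\|\mathbf u\|_{V_D}+C\rho_{\max}\|\mathbf G\|$. Squaring and integrating yields the $L^2(V_D')$ bound in the same form, with $\eta_{\max}$ now entering the constant. For Galerkin approximants this bound is obtained through the $L^2_\sigma$-orthogonal projection. That projection is $V_D$-stable if the basis is taken as eigenfunctions of the Stokes-type operator associated with the $V_D$ inner product.

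Passage to the limit needs only weak convergence, since the problem is linear and $\eta(c)$ is fixed. The initial condition holds because $\mathbf u\in C([0,T];L^2_\sigma)$ by the Lions--Magenes lemma. Uniqueness follows by applying the energy identity to the difference of two solutions, which has zero data. Finally, $V_D\hookrightarrow Y_s$ continuously, so $\mathbf u\in E_s(T)$ for every $s\in(1/2,1)$.

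The main delicate point is the functional-analytic setting, not the estimates. One must verify Korn's inequality with mixed boundary conditions on a $C^{1,1}$ domain, and that $V_D$ is dense in $L^2_\sigma$, which holds by definition. The real subtlety is pressure recovery when $\Gamma_N\neq\varnothing$. There one applies de Rham's theorem in the form of an inf-sup condition for the divergence on $\{\mathbf v\in H^1:\mathbf v|_{\Gamma_D}=0\}$. This yields $p$ as a distribution in time, for example $\int_0^t p\in C([0,T];L^2)$, because $\partial_t\mathbf u$ lies only in $V_D'$. The statement defers pressure recovery, so I would remark on it rather than prove it in full.
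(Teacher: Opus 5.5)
Your proposal is correct and follows essentially the same route as the paper: a Galerkin scheme on an $L^2_\sigma$-orthonormal basis of $V_D$, and the energy estimate via Korn, Poincar\'e and Young. It then reads off the $L^2(V_D')$ bound on $\partial_t\mathbf u_n$ from the equation, passes to weak limits in the linear problem, recovers the initial datum by Lions--Magenes, and proves uniqueness by the energy identity for the difference. Your remark that the $\partial_t\mathbf u_n$ bound needs a $V_D$-stable projection (for instance a Stokes-type eigenbasis) fills in a detail the paper leaves implicit and is worth keeping.
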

\end{revisionfive}

\begin{revisionfive}
\begin{proof}
We construct a Galerkin approximation by projecting the weak formulation onto a finite-dimensional subspace of \(V_D\). Let
\(\{\boldsymbol\varphi_k\}_{k\ge1}\subset V_D\)
be an orthonormal basis of \(L^2_\sigma(\Omega)\) that is total in \(V_D\), and define
\[
V_n=\operatorname{span}\{\boldsymbol\varphi_1,\ldots,\boldsymbol\varphi_n\}.
\]

We seek approximate solutions of the form
\[
\mathbf u_n(t)
=
\sum_{j=1}^n
d_j(t)\boldsymbol\varphi_j,
\]
which satisfy, for every \(\boldsymbol\varphi\in V_n\),
\begin{equation}\label{StokesWeak}
\alpha(\partial_t\mathbf u_n,\boldsymbol\varphi)
+
(2\eta(c)\mathcal E(\mathbf u_n),\mathcal E(\boldsymbol\varphi))
=
(\rho(c)\mathbf G,\boldsymbol\varphi).
\end{equation}

The initial condition is prescribed by
\[
\mathbf u_n(0)=P_n\mathbf u_0,
\]
where \(P_n\) denotes the \(L^2\)-projection onto \(V_n\). Since the coefficients are measurable in time and uniformly bounded, while the mass matrix is positive definite, standard ODE theory guarantees the existence of a unique solution on \([0,T]\).

We next derive estimates that are uniform with respect to \(n\).

Testing \eqref{StokesWeak} by \(\mathbf u_n\), using Korn's and Poincar\'e's inequalities together with Young's inequality, yields
\[
\frac\alpha2\frac{d}{dt}\|\mathbf u_n\|^2
+
c\eta_{\min}\|\mathbf u_n\|_{V_D}^2
\le
C\rho_{\max}^2\|\mathbf G\|^2.
\]
Consequently,
\[
\alpha\|\mathbf u_n\|_{L^\infty(L^2)}^2
+
\eta_{\min}\|\mathbf u_n\|_{L^2(V_D)}^2
\le
C\left(
\alpha\|\mathbf u_0\|_{L^2}^2
+
T\rho_{\max}^2\|\mathbf G\|^2
\right).
\]

Furthermore, \eqref{StokesWeak} together with the upper bound for \(\eta\) implies
\[
\|\partial_t\mathbf u_n\|_{V_D'}
\le
\frac C\alpha
\left(
\eta_{\max}\|\mathbf u_n\|_{V_D}
+
\rho_{\max}\|\mathbf G\|
\right),
\]
and therefore
\(\{\partial_t\mathbf u_n\}\) is uniformly bounded in
\(L^2(V_D')\).

By weak compactness, there exists a subsequence, still denoted by
\(\{\mathbf u_n\}\), such that
\[
\mathbf u_n
\stackrel{*}{\rightharpoonup}
\mathbf u
\quad
\text{in }
L^\infty(L^2_\sigma),\quad
\mathbf u_n
\rightharpoonup
\mathbf u
\quad
\text{in }
L^2(V_D), \quad
\partial_t\mathbf u_n
\rightharpoonup
\partial_t\mathbf u
\quad
\text{in }
L^2(V_D').
\]

Since \(c\) is prescribed and the coefficients
\(\eta(c)\) and \(\rho(c)\) are bounded, passage to the limit in the linear weak formulation is immediate.

By the Lions--Magenes continuity theorem,
$\mathbf u\in C([0,T];L^2_\sigma(\Omega)),$
and therefore
$\mathbf u(0)=\mathbf u_0.$

Finally, uniqueness follows by testing the difference of two weak solutions by their difference. The energy estimates pass to the limit by weak lower semicontinuity, completing the proof.
\end{proof}
\end{revisionfive}

Having established existence and regularity, we now prove continuity of the solution operator with respect to the prescribed concentration field.

\begin{revisionfive}
\begin{proposition}[Continuity of the Stokes solution map]
\label{prop:Stokes_cont}

Let \(r\in(3/2,2)\), and suppose
\[
c_n\to c
\quad\text{strongly in }L^2(0,T;H^r(\Omega)).
\]

Let $\mathbf u_n=\mathcal S(c_n)$ and $\mathbf u=\mathcal S(c)$. Then, for every \(s\in(1/2,1)\),
\[
\mathbf u_n\to\mathbf u
\quad\text{strongly in }L^2(0,T;H^s(\Omega)^d).
\]
\end{proposition}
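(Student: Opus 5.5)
Our plan is a direct energy estimate for the difference $\mathbf w_n:=\mathbf u_n-\mathbf u$. This gives strong convergence in $L^2(V_D)$, and the embedding $V_D\hookrightarrow H^s(\Omega)^d$ then yields the claim. Since $r>3/2$, the embedding $H^r(\Omega)\hookrightarrow L^\infty(\Omega)$ turns the hypothesis into $c_n\to c$ in $L^2(0,T;L^\infty(\Omega))$. Passing to a subsequence, we may also assume $c_n\to c$ a.e. in $\Omega\times(0,T)$.

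Subtracting the weak formulations \eqref{StokesWeak} (in their limit form, valid for all test functions in $V_D$) for $\mathbf u_n$ and $\mathbf u$, we get
\[
\alpha\langle\partial_t\mathbf w_n,\boldsymbol\varphi\rangle+(2\eta(c_n)\mathcal E(\mathbf w_n),\mathcal E(\boldsymbol\varphi))
=-(2(\eta(c_n)-\eta(c))\mathcal E(\mathbf u),\mathcal E(\boldsymbol\varphi))+((\rho(c_n)-\rho(c))\mathbf G,\boldsymbol\varphi),
\]
with $\mathbf w_n(0)=0$. We test with $\boldsymbol\varphi=\mathbf w_n$, which is legitimate by the Lions--Magenes lemma because $\mathbf w_n\in L^2(V_D)$ and $\partial_t\mathbf w_n\in L^2(V_D')$. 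Using Korn's inequality, the bound $\eta\ge\eta_{\min}$, and Young's inequality, we obtain
\[
\frac\alpha2\frac{d}{dt}\|\mathbf w_n\|^2+c\,\eta_{\min}\|\mathbf w_n\|_{V_D}^2
\le C\|(\eta(c_n)-\eta(c))\mathcal E(\mathbf u)\|^2+C\|(\rho(c_n)-\rho(c))\mathbf G\|^2 .
\]
Integrating over $(0,T)$ then gives
\[
\|\mathbf w_n\|_{L^2(V_D)}^2\le C\int_0^T\!\!\int_\Omega|\eta(c_n)-\eta(c)|^2|\mathcal E(\mathbf u)|^2+|\rho(c_n)-\rho(c)|^2|\mathbf G|^2\,dx\,dt .
\]

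The main obstacle is the viscosity term, because $\mathcal E(\mathbf u)$ lies only in $L^2(L^2)$ and no uniform integrability beyond that is available. We handle it with the Lipschitz bound $\eta\in W^{1,\infty}$:
\[
\|(\eta(c_n)-\eta(c))\mathcal E(\mathbf u)\|_{L^2(L^2)}^2\le\|\eta'\|_\infty^2\int_0^T\|c_n-c\|_{L^\infty}^2\|\mathcal E(\mathbf u)\|^2\,dt .
\]
Because the time integrability of the two factors does not match directly, we use dominated convergence instead. The integrand $|\eta(c_n)-\eta(c)|^2|\mathcal E(\mathbf u)|^2$ tends to $0$ a.e. along the subsequence, and it is bounded by $4\eta_{\max}^2|\mathcal E(\mathbf u)|^2\in L^1$. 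For the density term, $\rho\in C_b$, so $\rho(c_n)\to\rho(c)$ a.e. with bound $2\rho_{\max}$, and dominated convergence applies again since $\mathbf G$ is constant and $\Omega$ is bounded.

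Hence $\mathbf w_n\to0$ in $L^2(V_D)\hookrightarrow L^2(H^s)$ along the subsequence. The limit $\mathbf u=\mathcal S(c)$ is unique by Proposition \ref{prop:Stokes_exist}, so every subsequence has a further subsequence converging to the same $\mathbf u$, and the whole sequence therefore converges. As a by-product, the estimate also gives $\mathbf u_n\to\mathbf u$ in $L^\infty(L^2)$.
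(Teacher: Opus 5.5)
Your argument is correct, but it takes a different route from the paper's.

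\textbf{The paper's route.} The paper argues by compactness. The uniform bounds of Proposition~\ref{prop:Stokes_exist} and Aubin--Lions give a subsequence converging strongly in $L^2(L^2)$ and weakly in $L^2(V_D)$. The limit is identified against test functions $\boldsymbol\varphi\in C_c^\infty(0,T;V_D)$. For this the paper uses $\eta(c_n)\to\eta(c)$ in $L^2(L^\infty)$, which is where $r>3/2$ and the Lipschitz bound on $\eta$ enter, and dominated convergence for $\rho(c_n)$. Uniqueness then gives convergence of the whole sequence in $L^2(L^2)$. Finally, interpolation between $L^2$ and $H^1$, together with the uniform $L^2(H^1)$ bound, upgrades this to $L^2(H^s)$.

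\textbf{Your route.} You instead use the linearity of the problem in $\mathbf u$ and derive a stability estimate for the error $\mathbf w_n$. Testing with $\mathbf w_n$ itself is exactly what produces the time-integrability mismatch (two factors that are only $L^1(0,T)$). The paper avoids this because its test functions are bounded in time. Your dominated-convergence step resolves the mismatch correctly.

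\textbf{What each buys.} Your route gives a stronger conclusion under weaker hypotheses. You obtain $\mathbf u_n\to\mathbf u$ in $L^2(0,T;V_D)\cap L^\infty(0,T;L^2)$, so convergence at the $H^1$ level rather than only in $H^s$ with $s<1$. You use only $c_n\to c$ almost everywhere along subsequences, so $L^2(L^2)$ convergence of $c_n$ would suffice, together with continuity and two-sided bounds on $\eta$ and $\rho$. The embedding $H^r\hookrightarrow L^\infty$ and the Lipschitz bound you write down are never actually used. Your argument also needs no compactness and no interpolation. The paper's approach, in turn, follows the same compactness--identification--uniqueness template as its heat and Cahn--Hilliard continuity proofs, where a direct error estimate is less convenient.

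One minor remark: the appeal to uniqueness at the end is superfluous. The function $\mathbf u=\mathcal S(c)$ is fixed from the outset, so the subsequence principle is simply applied to the real numbers $\|\mathbf w_n\|_{L^2(V_D)}$.
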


\begin{proof}
Let $\mathbf u_n=\mathcal S(c_n).$ By Proposition \ref{prop:Stokes_exist}, the sequence
\(\{\mathbf u_n\}\) is uniformly bounded in
\[
L^\infty(L^2_\sigma)
\cap
L^2(V_D),\quad
\{\partial_t\mathbf u_n\}
\subset
L^2(V_D').
\]
Therefore, the Aubin--Lions compactness theorem yields the existence of a subsequence, which we still denote by
\(\{\mathbf u_n\}\), and a function
\(\widetilde{\mathbf u}\) such that
\[
\mathbf u_n
\to
\widetilde{\mathbf u}
\quad\text{strongly in }
L^2(L^2(\Omega)^d),
\]
while
\[
\mathbf u_n
\rightharpoonup
\widetilde{\mathbf u}
\quad\text{weakly in }
L^2(V_D).
\]

We now verify that \(\widetilde{\mathbf u}\) satisfies the prescribed-input Stokes problem corresponding to \(c\). Since
\(r>3/2\), the embedding
\[
H^r(\Omega)\hookrightarrow L^\infty(\Omega)
\]
holds. Consequently, the Lipschitz continuity of \(\eta\) yields
\[
\eta(c_n)
\to
\eta(c)
\quad\text{strongly in }
L^2(L^\infty).
\]

Moreover, after passing to a subsequence,
\(c_n\to c\) almost everywhere. Since \(\rho\) is bounded and continuous, dominated convergence implies
\[
\rho(c_n)
\to
\rho(c)
\quad\text{strongly in }
L^2(L^2).
\]

It therefore remains to identify the limit in the weak formulation. For every
\(\boldsymbol\varphi\in C_c^\infty(0,T;V_D)\),
\begin{align*}
&\left|
\int_0^T
((\eta(c_n)-\eta(c))
\mathcal E(\mathbf u_n),
\mathcal E(\boldsymbol\varphi))
\,dt
\right|
\\[0.3em]
&\qquad\le
\|\eta(c_n)-\eta(c)\|_{L^2(L^\infty)}
\|\mathcal E(\mathbf u_n)\|_{L^2(L^2)}
\|\mathcal E(\boldsymbol\varphi)\|_{L^\infty (L^2)}
\longrightarrow0.
\end{align*}

The remaining viscous term converges by the weak convergence of
\(\mathbf u_n\), while the forcing term converges by the strong convergence of
\(\rho(c_n)\). The time-derivative term passes to the limit by weak convergence in
\(L^2(V_D')\). Moreover, by the Lions--Magenes continuity theorem,
\[
\widetilde{\mathbf u}\in C([0,T];L^2_\sigma(\Omega))
\quad\text{and}\quad
\widetilde{\mathbf u}(0)=\mathbf u_0.
\]
Consequently,
\(\widetilde{\mathbf u}\) satisfies the prescribed-input Stokes problem corresponding to \(c\).

By the uniqueness established in Proposition \ref{prop:Stokes_exist},
\[
\widetilde{\mathbf u}
=
\mathcal S(c).
\]
Since every subsequence admits a further subsequence converging to the same limit, the full sequence converges strongly in
\[
L^2(L^2(\Omega)^d).
\]

Finally,
\[
\|\mathbf u_n-\widetilde{\mathbf u}\|_{L^2(H^s)}^2
\le
C
\|\mathbf u_n-\widetilde{\mathbf u}\|_{L^2(L^2)}^{2(1-s)}
\|\mathbf u_n-\widetilde{\mathbf u}\|_{L^2(H^1)}^{2s}.
\]
Since \(\{\mathbf u_n\}\) is uniformly bounded in
\(L^2(0,T;H^1(\Omega)^d)\), it follows that
\[
\mathbf u_n
\to
\widetilde{\mathbf u}
\quad\text{strongly in }
L^2(H^s(\Omega)^d),
\]
completing the proof.
\end{proof}
\end{revisionfive}

\subsection{The Fixed-Point Argument}\label{sec:fixedpoint}

\begin{revisionfive}

We now combine the existence and continuity results established for the heat, Cahn--Hilliard, and Stokes subproblems to prove Theorem \ref{thm:main-existence}. To this end, we apply the Schauder fixed-point theorem to the operator
\[
\mathcal T(\mathbf u)
:=
\mathcal S\bigl(\mathcal C(\mathbf u,\mathcal H(\mathbf u))\bigr),
\]
acting on the Banach space
\[
E_s(T)=L^2(0,T;Y_s) \quad \text{ with } \quad
Y_s
=
\{\mathbf v\in H^s(\Omega)^d:
\nabla\cdot\mathbf v=0,\;
\operatorname{Tr}\mathbf v=0
\text{ on }\Gamma_D\},~~
s \in (1/2,1).
\]

\begin{proof}[Proof of Theorem \ref{thm:main-existence}]
Choose \(R>0\) and define
\[
K:=
\{\mathbf u\in E_s(T):
\|\mathbf u\|_{E_s(T)}
\le R\}.
\]
Since \(Y_s\) is a closed subspace of \(H^s(\Omega)^d\), the set \(K\) is nonempty, closed, bounded, and convex. To apply the Schauder fixed-point theorem, it remains to show that
\(\mathcal T\) maps \(K\) into itself, is continuous, and has relatively compact image.

We begin by verifying that the operator \(\mathcal T\) is well defined. Let
\(T\le1\) and let \(\mathbf u\in K\). Proposition
\ref{heatwell} defines $\Theta=\mathcal H(\mathbf u),$
and provides the estimate
$\|\Theta\|_{X_\Theta(T)}
\le
M_\Theta(R)$, while Proposition \ref{pro:maxprinciple} guarantees that
\(\Theta\ge0\). The bound on \(\mathbf u\) together with this estimate satisfies the hypotheses of Proposition \ref{prop:CH_exist}. Consequently, $c=\mathcal C(\mathbf u,\Theta).$ Finally, Proposition \ref{prop:Stokes_exist} defines $\mathcal T(\mathbf u)
=
\mathcal S(c)$. We therefore choose \(T>0\) sufficiently small so that all of the preceding existence results apply simultaneously.

\medskip

\noindent\textbf{Self-map property.}
By the estimate in Proposition \ref{prop:Stokes_exist},
\[
\|\mathcal T(\mathbf u)\|_{E_s(T)}^2
\le
C_{\rm emb}
\|\mathcal T(\mathbf u)\|_{L^2(V_D)}^2
\le
C_0\alpha\|\mathbf u_0\|^2
+
C_1T\|\mathbf G\|^2,
\]
where \(C_0\) and \(C_1\) are independent of \(\mathbf u\in K\). 

We first choose \(R>0\) sufficiently large so that
$R^2>
2C_0\alpha
\|\mathbf u_0\|^2,$
and, if necessary, enlarge \(R\) to account for the initial data. We then choose \(T>0\) sufficiently small so that $C_1T\|\mathbf G\|^2
\le
\frac{R^2}{2}.$ Combining the two estimates yields
\[
\|\mathcal T(\mathbf u)\|_{E_s(T)}
\le
R,
\]
and therefore $\mathcal T(K)\subseteq K.$

\medskip
\noindent\textbf{Continuity.}
Let
\[
\mathbf u_n
\to
\mathbf u
\quad\text{strongly in }
E_s(T).
\]

By Proposition \ref{prop:H-continuity},
\[
\Theta_n
:=
\mathcal H(\mathbf u_n)
\to
\Theta
:=
\mathcal H(\mathbf u)
\quad
\text{strongly in }
L^2(L^2).
\]
Moreover, Proposition \ref{heatwell} provides a uniform bound for
\(\{\Theta_n\}\)
in
\(X_\Theta(T)\), with $X_\Theta(T)$ given by \eqref{Xthetadef}. Therefore, Proposition
\ref{prop:CH_continuity_full}
yields
\[
c_n
:=
\mathcal C(\mathbf u_n,\Theta_n)
\to
c
:=
\mathcal C(\mathbf u,\Theta)
\quad
\text{strongly in }
L^2(H^r),
\]
for every
\(r\in(3/2,2)\).
Finally, Proposition
\ref{prop:Stokes_cont}
yields
\[
\mathcal T(\mathbf u_n)
=
\mathcal S(c_n)
\to
\mathcal S(c)
=
\mathcal T(\mathbf u)
\quad
\text{strongly in }
E_s(T).
\]

\medskip
\noindent\textbf{Relative compactness.}
Let
\(\{\mathbf u_n\}\subset K\).
By Proposition
\ref{prop:Stokes_exist},
the sequence
\(\{\mathcal T(\mathbf u_n)\}\)
is uniformly bounded in
$L^2(V_D),$
while
\[
\{\partial_t\mathcal T(\mathbf u_n)\}
\subset
L^2(V_D')
\]
is uniformly bounded.
Since the embedding $V_D
\hookrightarrow
Y_s$ is compact and
$Y_s
\hookrightarrow
V_D'$ is continuous, the Aubin--Lions compactness theorem implies that
$\mathcal T(K)$ is relatively compact in
\(E_s(T)\).

Since \(\mathcal T\) is well defined, maps \(K\) into itself, is continuous, and has relatively compact image, all the hypotheses of the Schauder fixed-point theorem are satisfied. Therefore, there exists
$\mathbf u^*
\in
K$ such that $\mathcal T(\mathbf u^*)
=
\mathbf u^*.$ Setting
\[
\Theta^*
=
\mathcal H(\mathbf u^*),
\qquad
c^*
=
\mathcal C(\mathbf u^*,\Theta^*),
\]
and letting
\(\mu^*\)
denote the corresponding chemical potential, we obtain the weak solution asserted in Theorem
\ref{thm:main-existence}. The regularity and initial data follow directly from Propositions
\ref{heatwell},
\ref{prop:CH_exist},
and
\ref{prop:Stokes_exist}.
\end{proof}
\end{revisionfive}

\begin{revisionfive}
With local existence established for the regularized auxiliary system, we now turn to the numerical approximation of the corresponding unregularized quasi-static model. The parameter \(\alpha>0\) and the spatial mollification of the temperature are used only in the analytical construction above. In the numerical experiments we set \(\alpha=0\) and use the original temperature field. The simulations illustrate qualitative features of that related model and are not presented as a numerical validation of Theorem \ref{thm:main-existence}.
\end{revisionfive}

\section{Numerical experiments}
\label{sec:numerics}

This section reports computations for simplified quasi-static numerical variants of the model implemented in FreeFEM++, with $\alpha=0$ and without temperature mollification.
Experiments~1--3 use either a prescribed temperature or no thermal solve. In Experiment~4, the temperature is advanced by a diffusion-only update and the material contrasts are allowed to depend piecewise on temperature (see details below). The numerical experiments are intended as qualitative, parameter-specific illustrations and are not presented as validation of the analytical existence result.

The scalar fields $c$, $\mu$, and $\Theta$ are approximated by continuous $P_1$ finite elements, while the Stokes velocity and pressure are approximated by the Taylor--Hood pair $P_2$--$P_1$. Time integration is first order and sequential: when present, heat diffusion is updated first, followed by the quasi-static Stokes solve and the Cahn--Hilliard update. The bulk derivative in
the chemical potential is evaluated at the previous time level, so every algebraic system solved in the computations is linear. In the FreeFEM++ implementation we use the sparse direct solver \texttt{UMFPACK}. No Newton tolerance, damping parameter, or nonlinear stopping criterion is applicable. In the all-Dirichlet cavity problem, the pressure nullspace is regularized by $\varepsilon_p=10^{-10}$, whereas $\varepsilon_p=0$ in the mixed Dirichlet--traction problems.

\revfive{We use the FreeFEM++ boundary labels
\[
\Gamma_1=\{(x,0)\},\qquad
\Gamma_2=\{(L_x,y)\},\qquad
\Gamma_3=\{(x,L_y)\},\qquad
\Gamma_4=\{(0,y)\}.
\]
The random initial data in Experiments~2--4 are Bernoulli nodal data,
\begin{equation}
 c_h^0(x_i)=
 \begin{cases}
 -0.5, & \text{with probability }0.7,\\
 +0.5, & \text{with probability }0.3,
 \end{cases}
 \qquad \mathbb E[c_h^0]=-0.2.
 \label{eq:bernoulli_initial_data}
\end{equation}
The nodal levels have magnitude $0.5$ and total range $1$.  Relative to the expected mean $-0.2$, the two deviations are $-0.3$ and $+0.7$. These data are Bernoulli nodal values rather than a symmetric small-amplitude perturbation.}

\subsection{Spatial discretization}

Let $\mathcal T_h$ be a shape-regular triangulation of $\Omega$ with mesh size
$h$. We use the scalar $P_1$ space
\[
Q_h:=\bigl\{q_h\in H^1(\Omega):q_h|_K\in P_1(K),\ 
K\in\mathcal T_h\bigr\}
\]
for $c_h$, $\mu_h$, $\Theta_h$, and $p_h$.  For the velocity, let
\[
V_{h,0}:=\bigl\{\mathbf v_h\in H^1(\Omega)^d:
\mathbf v_h|_K\in P_2(K)^d,\ \mathbf v_h=\mathbf0
\text{ on }\Gamma_D\bigr\},
\]
and let $V_{h,\mathbf g_D}:=\mathbf g_{D,h}+V_{h,0}$ denote the corresponding affine trial space when nonhomogeneous Dirichlet data are prescribed. Thus the velocity trial function belongs to $V_{h,\mathbf g_D}$ and the test function
to $V_{h,0}$. The pair $P_2$--$P_1$ is the standard Taylor--Hood pair. In the all-Dirichlet cavity problem the pressure is defined only up to an additive constant and is fixed numerically by the penalty described below. In the mixed Dirichlet--traction problems the traction condition fixes the pressure level.

For the thermal problem we also introduce
\[
Q_{h,\Theta_D}
:= \bigl\{ \theta_h \in Q_h \;:\; \theta_h=\Theta_D
\text{ on } \Gamma_\Theta \bigr\},
\qquad
Q_{h,0}^{\Theta}
:= \bigl\{ \omega_h \in Q_h \;:\; \omega_h=0
\text{ on } \Gamma_\Theta \bigr\}.
\]

\subsection{Time discretization and coupling strategy}

Given the discrete fields at time $t^m$, the active subproblems are applied sequentially. When present, the heat equation is solved first, otherwise the prescribed temperature is used. The experiment-specific material coefficients are then updated and the Stokes problem is solved when flow is active. In the purely diffusive experiment we set $\mathbf u^{m+1}=\mathbf0$. Finally, the Cahn--Hilliard system is advanced. The three substeps are stated below in weak form.

\subsubsection{Heat-diffusion update used in the thermal runs}

\revfive{Given $\Theta_h^m\in Q_h$, seek
$\Theta_h^{m+1}\in Q_{h,\Theta_D}$ such that
\begin{equation}
\int_\Omega \frac{\Theta_h^{m+1}-\Theta_h^m}{\Delta t}\,\omega_h\,dx
+\frac{1}{Pe_\Theta}\int_\Omega
\nabla\Theta_h^{m+1}\cdot\nabla\omega_h\,dx=0
\label{Heat_discrete_revised}
\end{equation}
for every $\omega_h\in Q_{h,0}^{\Theta}$. The homogeneous Neumann condition
on $\Gamma_{\Theta_N}$ is natural. In Experiment~4,
$1/Pe_\Theta=0.4$, and hence $Pe_\Theta=2.5$. No advective heat term is included in the experiment.}

\subsubsection{Stokes subproblem}

Given $c^m$ and, in Experiment~4, the already computed temperature
$\Theta^{m+1}$, we define the experiment-specific coefficient fields
$\rho_h^m$ and $\eta_h^m$ by the laws stated in the corresponding experiment.
We seek $(\mathbf u^{m+1},p^{m+1})\in V_{h,\mathbf g_D}\times Q_h$ such that
\begin{align}
\int_\Omega 2\eta_h^m\mathcal E(\mathbf u^{m+1}):
\mathcal E(\mathbf v_h)\,dx
-\int_\Omega p^{m+1}\nabla\!\cdot\mathbf v_h\,dx
&=\int_\Omega G\rho_h^m\widehat{\mathbf g}\cdot\mathbf v_h\,dx,
\label{Stokes_discrete_momentum}\\[0.3em]
\int_\Omega q_h\nabla\!\cdot\mathbf u^{m+1}\,dx
+\varepsilon_p\int_\Omega p^{m+1}q_h\,dx&=0,
\label{Stokes_discrete_continuity}
\end{align}
for every $(\mathbf v_h,q_h)\in V_{h,0}\times Q_h$.  We use
$\varepsilon_p=10^{-10}$ only in the all-Dirichlet cavity problem and
$\varepsilon_p=0$ in Experiments~3--4.  The traction-free condition is encoded naturally, whereas velocity Dirichlet data are imposed strongly.  This is a linear saddle-point system at every time step.

\subsubsection{Linearized Cahn--Hilliard update used in Experiments~2--4}
\revfive{Let $I_h:C(\overline\Omega)\to Q_h$ denote the nodal $P_1$ interpolation operator and define
\[
 g_h^m:=I_h\!\left[(c_h^m)^3\right].
\]
Given $c_h^m\in Q_h$, $\mathbf u_h^{m+1}\in V_{h,\mathbf g_D}$, and the
available coefficient $\beta_h^{m+1}$ (computed from $\Theta_h^{m+1}$ in
Experiment~4 and prescribed in Experiments~2--3), seek
$(c_h^{m+1},\mu_h^{m+1})\in Q_h\times Q_h$ such that
\begin{align}
\int_\Omega \frac{c_h^{m+1}-c_h^m}{\Delta t}\,\psi_h\,dx
+\int_\Omega(\mathbf u_h^{m+1}\cdot\nabla c_h^{m+1})\psi_h\,dx
+\frac1{Pe}\int_\Omega\nabla\mu_h^{m+1}\cdot\nabla\psi_h\,dx&=0,
\label{CH_discrete_1_revised}\\
\int_\Omega \mu_h^{m+1}\phi_h\,dx
-Ch^2\int_\Omega\nabla c_h^{m+1}\cdot\nabla\phi_h\,dx
-\int_\Omega\left(8g_h^m-2(1-\beta_h^{m+1})c_h^m\right)\phi_h\,dx&=0
\label{CH_discrete_2_revised}
\end{align}
for all $\psi_h,\phi_h\in Q_h$. The second equation is the weak
finite-element representation of the formal relation
\[
\mu_h^{m+1}=8g_h^m-2(1-\beta_h^{m+1})c_h^m
-Ch^2\Delta c_h^{m+1}.
\]
The bulk derivative is evaluated at time level $m$, so
\eqref{CH_discrete_1_revised}--\eqref{CH_discrete_2_revised} form one linear mixed system. Experiment~1 uses the same explicit nodal treatment of the cubic term, with the phase scaling and interfacial coefficient stated in Section~\ref{sec:exp1-cavity}.}

\subsection{Sequential time-stepping algorithm}

At a time level at which a thermal solve is present, one step
$t^m\mapsto t^{m+1}$ is performed as follows:
\begin{enumerate}
  \item compute $\Theta^{m+1}$ from
  \eqref{Heat_discrete_revised}; in isothermal experiments, use the prescribed
  constant temperature instead;
  \item define the experiment-specific fields $\rho_h^m$ and $\eta_h^m$ from
  $c^m$ and, in Experiment~4, $\Theta^{m+1}$;
  \item compute $(\mathbf u^{m+1},p^{m+1})$ from
  \eqref{Stokes_discrete_momentum}--\eqref{Stokes_discrete_continuity}; in the
  purely diffusive experiment set $\mathbf u^{m+1}=\mathbf0$;
  \item compute $(c^{m+1},\mu^{m+1})$ from
  \eqref{CH_discrete_1_revised}--\eqref{CH_discrete_2_revised}.
\end{enumerate}
\revfive{The splitting is formally first order in time. Every subproblem is linear, because the material coefficients and the bulk derivative are evaluated from already available fields. For $\mathbf u_h=\mathbf0$, choosing $\psi_h\equiv1$ in \eqref{CH_discrete_1_revised} gives conservation of the finite-element mass up to direct-solver roundoff. More generally, when $\varepsilon_p=0$, the discrete continuity equation holds for every $q_h\in Q_h$. Together with $\mathbf u_h\cdot\mathbf n=0$ on $\partial\Omega$, this implies conservation of the discrete mass. No unconditional energy-stability result is claimed for the linearized Cahn--Hilliard update, and no global discrete energy law is claimed for the non-isothermal sequential scheme.
}

\medskip
\noindent

\begin{revisionfive}
{\subsection{Stand-alone verification of the isothermal diffusive
Cahn--Hilliard update}
\label{sec:CH_standalone_verification}

To separate numerical-method verification from the qualitative coupled examples, we performed an independent $P_1$ finite-element assembly of the linearized isothermal Cahn--Hilliard update \eqref{CH_discrete_1_revised}--\eqref{CH_discrete_2_revised} with $\mathbf u=0$.  The test uses
\[
\Omega=(0,1)^2,\quad 100\times100\text{ subdivisions},\quad
Pe=1000,\quad Ch=0.01,\quad \Theta=0.3,\quad
\Delta t=0.01,\quad t_{\rm fin}=2,
\]
and the Bernoulli initialization \eqref{eq:bernoulli_initial_data}. 
The Bernoulli field in this verification was generated with the NumPy PCG64 generator and seed $20260803$. The same integer seed in NumPy and FreeFEM++ does not imply an identical random realization. 
We monitor the normalized mass defect
\[
\delta_M^m=
\frac{\left|\int_\Omega c_h^m\,dx-\int_\Omega c_h^0\,dx\right|}
{\max\left\{1,\left|\int_\Omega c_h^0\,dx\right|\right\}}
\]
and the discrete free energy
\[
\mathcal E_h(c_h^m)=
\int_\Omega\left(
2(c_h^m)^4-(1-\Theta)(c_h^m)^2
+\frac{Ch^2}{2}|\nabla c_h^m|^2
\right)dx.
\]
The maximum mass defect over $0\le t\le2$ is
$6.94\times10^{-16}$.  The energy decreases from $0.380337$ to
$-0.034942$, with no positive time increment in this test.  The relative algebraic residual is defined by
\[
 r^m=\frac{\|A_hx^m-b^m\|_2}{\max\{1,\|b^m\|_2\}},
\]
and its maximum over the run is $4.97\times10^{-18}$. These results document the behaviour of this specific test. {All $L^2$ norms were evaluated with the consistent $P_1$ mass matrix, and the quartic bulk-energy term was integrated with a degree-four triangle quadrature rule.}

\begin{figure}[h!]
\centering
\includegraphics[width=0.63\textwidth]{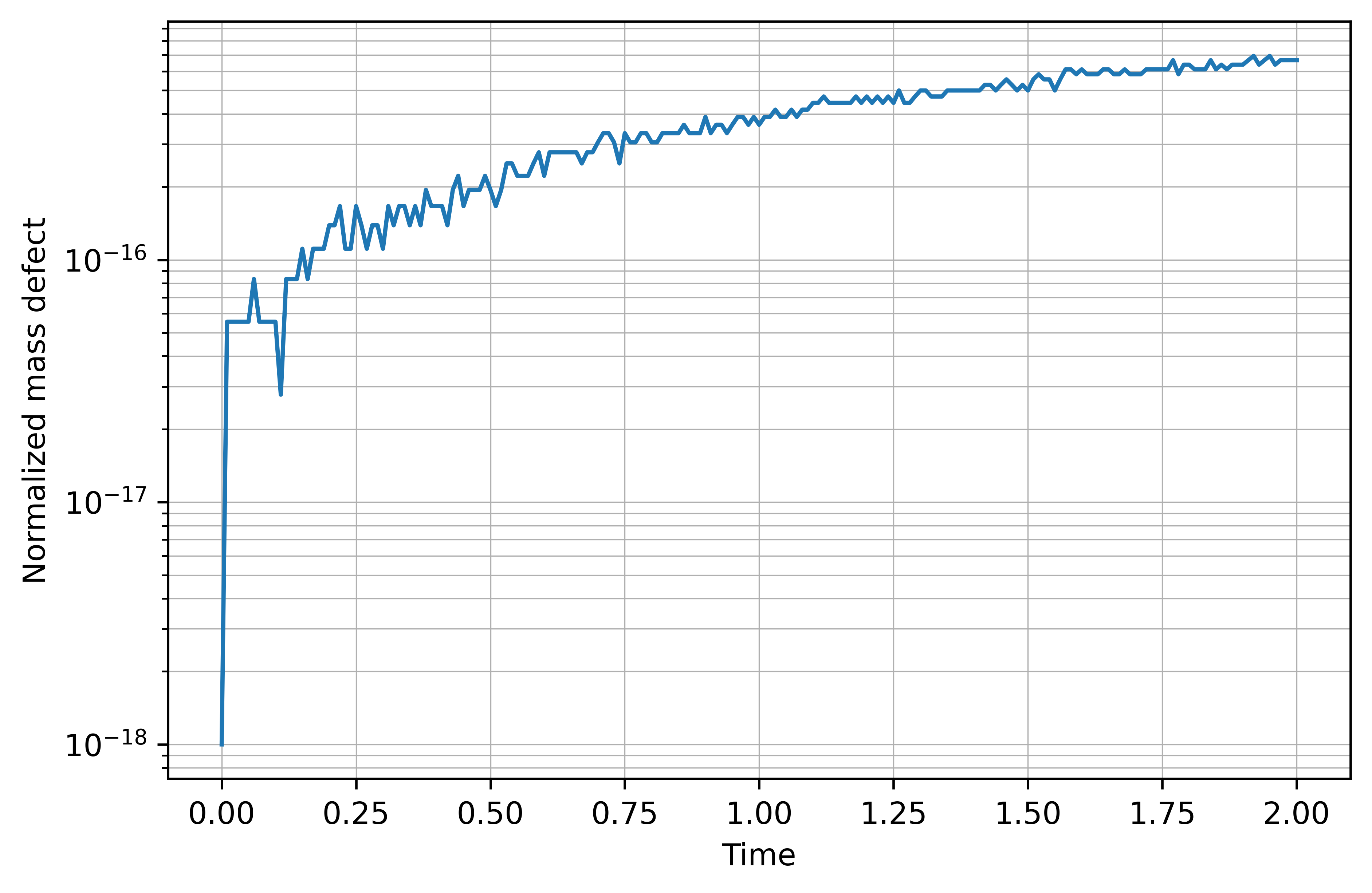}
\caption{Scaled mass defect for the stand-alone isothermal diffusive
Cahn--Hilliard verification.  The defect remains at floating-point roundoff.}
\label{fig:ch_mass_diagnostic}
\end{figure}

\begin{figure}[h!]
\centering
\includegraphics[width=0.63\textwidth]{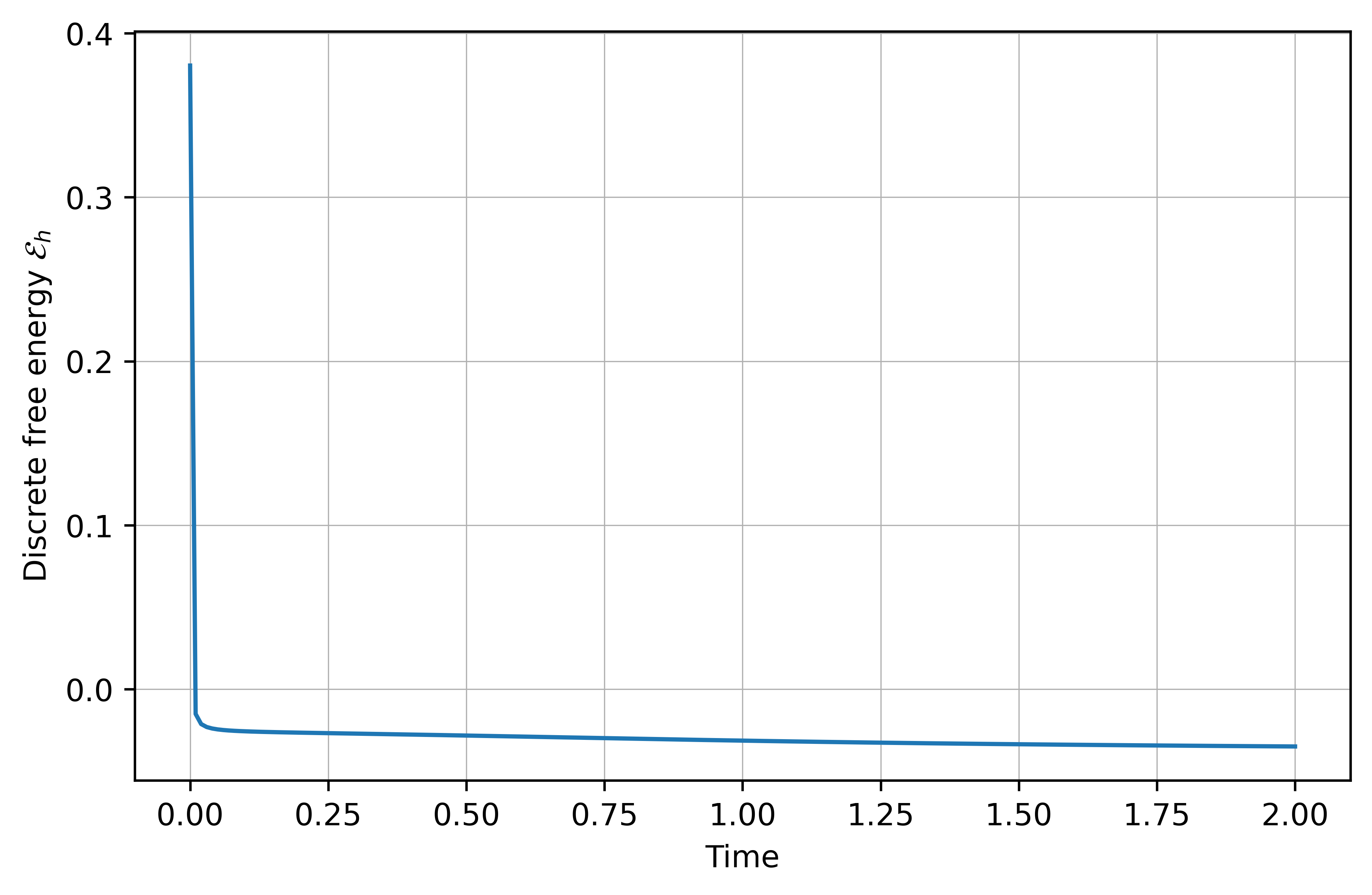}
\caption{Discrete free energy in the same stand-alone verification.  The
energy is monotone decreasing for the tested parameter set.}
\label{fig:ch_energy_diagnostic}
\end{figure}

For time refinement, we fix $t=0.5$ and compare successive time-step
solutions on the $100\times100$ mesh. The last row of Table~\ref{tab:ch_time_refinement} compares
$\Delta t=2.5\times10^{-3}$ with an additional solution computed using
$\Delta t=1.25\times10^{-3}$. The differences and observed rates are consistent with the formal first-order time discretization.

\begin{table}[h!]
\centering
\begin{tabular}{ccc}
\hline
$\Delta t$ &
$\|c_{\Delta t}(0.5)-c_{\Delta t/2}(0.5)\|_{L^2}$ & rate\\
\hline
$2.0\times10^{-2}$ & $6.3875\times10^{-3}$ & --\\
$1.0\times10^{-2}$ & $3.0820\times10^{-3}$ & $1.051$\\
$5.0\times10^{-3}$ & $1.4955\times10^{-3}$ & $1.043$\\
$2.5\times10^{-3}$ & $7.2791\times10^{-4}$ & $1.039$\\
\hline
\end{tabular}
\caption{Time-step refinement for the stand-alone isothermal diffusive
Cahn--Hilliard update.}
\label{tab:ch_time_refinement}
\end{table}

For mesh refinement, we use the smooth initial datum
\[
c_0(x,y)=-0.2+0.05\cos(2\pi x)\cos(2\pi y),
\]
with $Pe=1000$, $Ch=0.04$, $\Theta=0.3$, $\Delta t=5\times10^{-4}$,
and $t_{\rm fin}=0.05$.  The $200\times200$ solution is used as the reference. Each coarser continuous $P_1$ solution is evaluated at the nodes of the nested reference mesh before the consistent-mass $L^2$ difference is computed (Table~\ref{tab:ch_mesh_refinement}). The observed rates are approximately second order for this smooth test. 

\begin{table}[h!]
\centering
\begin{tabular}{ccc}
\hline
subdivisions per direction & $L^2$ error vs. $200\times200$ & rate\\
\hline
$25$  & $3.1688\times10^{-4}$ & --\\
$50$  & $7.6371\times10^{-5}$ & $2.053$\\
$100$ & $1.5940\times10^{-5}$ & $2.260$\\
\hline
\end{tabular}
\caption{Mesh refinement for a smooth stand-alone Cahn--Hilliard test.}
\label{tab:ch_mesh_refinement}
\end{table}

Finally, Table~\ref{tab:ch_limited_sensitivity} gives a limited one-factor check of the two parameters that enter the stand-alone Cahn--Hilliard step. Here
\[
\sigma_c(t)=\left(|\Omega|^{-1}\int_\Omega
|c_h(t)-\overline c_h|^2dx\right)^{1/2}.
\]
At fixed time, lower $Pe$ or lower $Ch$ produces a more developed separated
state in this diagnostic.  The mass defect remains below $9\times10^{-16}$ in
all listed runs.

All sensitivity runs use the $100\times100$ mesh, $\Theta=0.3$,
$\Delta t=0.01$, $t_{\rm fin}=2$, and the same Bernoulli realization as the baseline verification. At fixed time, decreasing $Pe$ increases the mobility $1/Pe$, while decreasing $Ch$ reduces the interfacial penalty. Both changes increase $\sigma_c(2)$ over the tested values. Again, these results are descriptive and are not used to claim mesh-independent sensitivity.

\begin{table}[h!]
\centering
\begin{tabular}{cccc}
\hline
varied parameter & value & fixed companion parameter & $\sigma_c(2)$\\
\hline
$Pe$ & $500$  & $Ch=0.01$ & $0.25687$\\
$Pe$ & $1000$ & $Ch=0.01$ & $0.22804$\\
$Pe$ & $2000$ & $Ch=0.01$ & $0.18037$\\
$Ch$ & $0.0075$ & $Pe=1000$ & $0.25324$\\
$Ch$ & $0.0100$ & $Pe=1000$ & $0.22804$\\
$Ch$ & $0.0150$ & $Pe=1000$ & $0.13567$\\
\hline
\end{tabular}
\caption{Limited one-factor sensitivity of the stand-alone isothermal
Cahn--Hilliard test.  This is not a sensitivity study of the fully coupled
model.}
\label{tab:ch_limited_sensitivity}
\end{table}

A systematic sweep over $Pe_\Theta$, $G$, the density ratio, and the viscosity ratio would require a new fully coupled computational campaign.}
    \end{revisionfive}

We now turn to the numerical experiments.

\subsection{Experiment 1: qualitative lid-driven-cavity benchmark comparison}
\label{sec:exp1-cavity}
This test is a qualitative implementation check rather than validation against the full model of Khatavkar et al. The reference momentum equation contains capillary forcing, whereas the present Stokes solve does not. Consequently, a pointwise or norm-based error would compare solutions of different PDE systems and would not provide a meaningful quantitative validation measure.

For this benchmark only, we use a benchmark-specific phase field $q_h$,
initialized at the levels $\pm1$. It is not identified with the nondimensional
order parameter $c_h$ used in Experiments~2--4. The implemented chemical-
potential update is
\[
\mu_{{\rm cav},h}^{m+1}
=8I_h[(q_h^m)^3]-2(1-\Theta)q_h^m
-\varepsilon_{\rm cav}^2\Delta q_h^{m+1},
\qquad
\varepsilon_{\rm cav}^2=5\times10^{-4}.
\]
The initial condition is
\[
q_h^0(x,y)=\begin{cases}1,&y<0.5,\\-1,&y\ge0.5.\end{cases}
\]
This is equivalent to the conventional top-lid orientation by the reflection $y\mapsto1-y$.
Homogeneous no-flux conditions are used for $q$ and $\mu$.

The parameters are
\[
Pe=10^4,\qquad \Theta=0.3,\qquad \Delta t=0.01,
\qquad t_{\rm fin}=5,
\]
and the interfacial-gradient coefficient in this benchmark is
\[
\varepsilon_{\rm cav}^2
=\tfrac12\,Ch_{\rm code}=5\times10^{-4},
\qquad Ch_{\rm code}=10^{-3}.
\]

\begin{figure}[h!]
  \centering

  \begin{subfigure}[t]{0.48\textwidth}
    \centering
    \includegraphics[width=\textwidth]{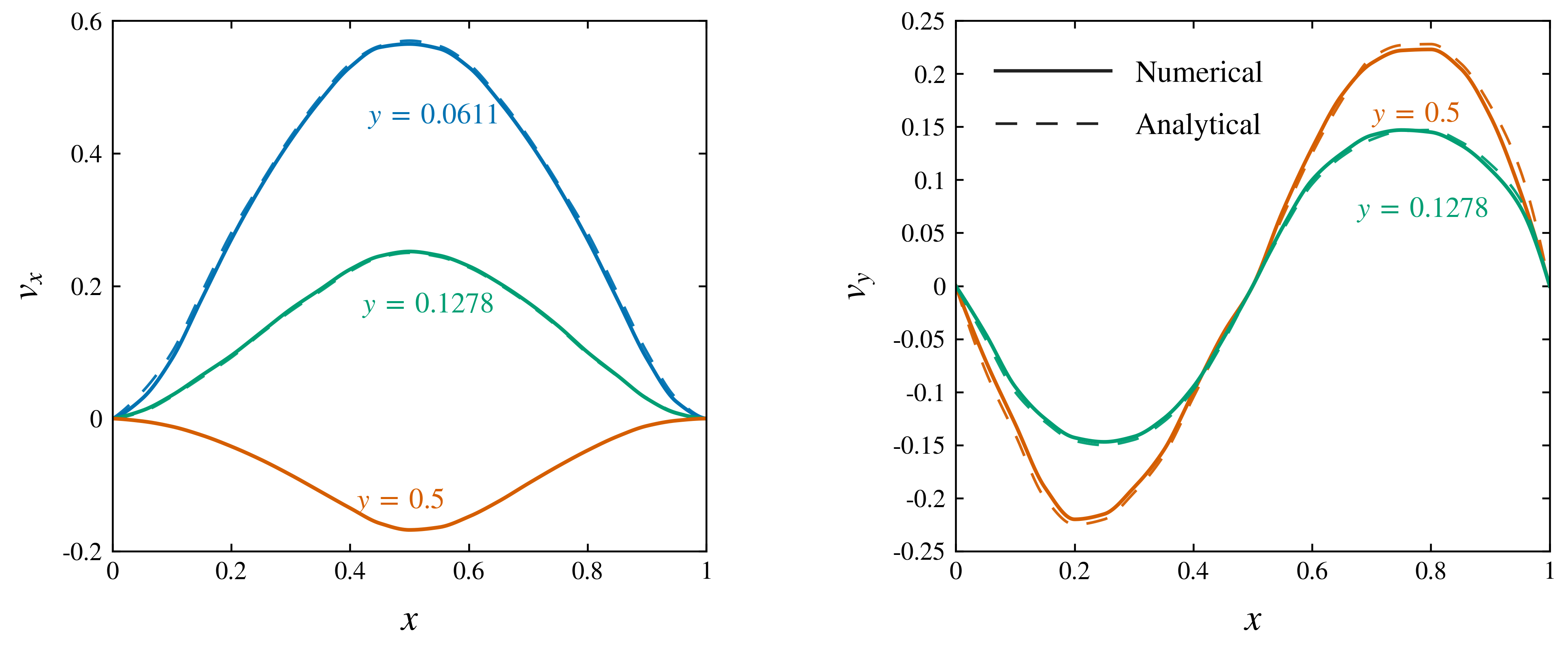}
    \caption{}
    \label{fig:khat_v_a}
  \end{subfigure}
  \hfill
  \begin{subfigure}[t]{0.48\textwidth}
    \centering
    \includegraphics[width=\textwidth]{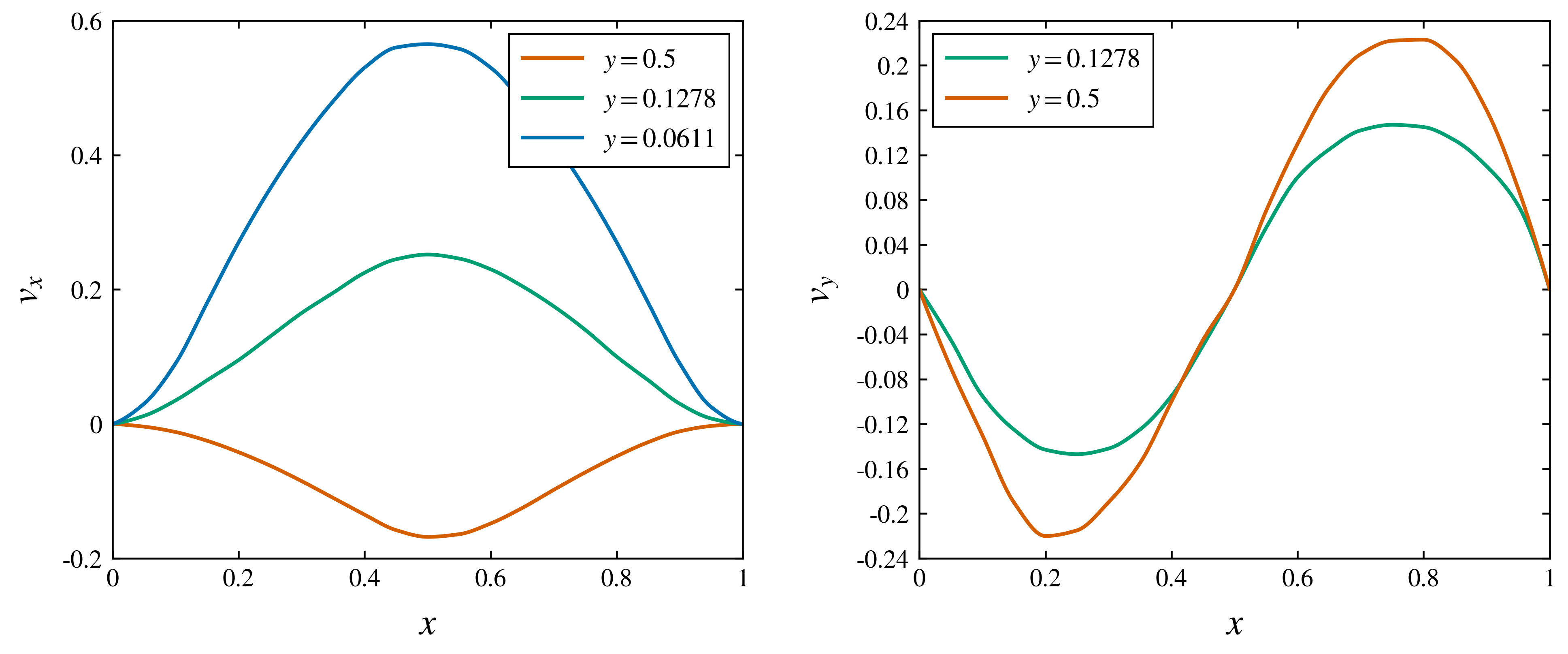}
    \caption{}
    \label{fig:khat_v_b}
  \end{subfigure}

 \caption[Comparison of the present and reference velocity profiles.]
{Comparison of the present velocity profiles and analytical reference curves (a) with the profiles reported by Khatavkar et al.\ (b), \revfive{reference curves digitized and replotted} from~\cite{Khat}.}
  \label{fig:khat_v}
\end{figure}

The benchmark viscosity is
\[
\eta_{\rm cav}(q)=\frac{q+1}{2}+\frac{1-q}{2\lambda},
\]
so that $\eta_{\rm cav}(1)=1$ and
$\eta_{\rm cav}(-1)=1/\lambda$.
\revfive{We use a $72\times72$ mesh and viscosity contrasts $\lambda=1$ and $\lambda=10$.  Gravity is absent, $G=0$. The comparison at
$t=1$ shows the same qualitative cavity circulation and interface-deformation trend as the reference results (but no claim of quantitative agreement or full-model validation is made).}

\begin{table}[h]
\centering
\begin{tabularx}{\textwidth}{@{}l l X@{}}
\hline
\textbf{Parameter} & \textbf{Symbol} & \textbf{Value}\\
\hline
Domain & $\Omega$ & $(0,1)^2$\\
Benchmark phase field & $q=0$ & $q_0=\pm1$\\
Initial interface & $q_0$ & $q_0=1$ for $y<0.5$, $q_0=-1$ for $y\ge0.5$\\
Moving-boundary amplitude & $\gamma$ & $16$\\
Mass P\'eclet number & $Pe$ & $10^4$\\
Prescribed temperature & $\Theta$ & $0.3$\\
FreeFEM interfacial parameter & $Ch_{\rm code}$ & $10^{-3}$\\
Gradient coefficient & $\varepsilon_{\rm cav}^2$ & $5\times10^{-4}$\\
Effective interfacial parameter & $\varepsilon_{\rm cav}$ & $2.236\times10^{-2}$\\
Time step & $\Delta t$ & $0.01$\\
Final time & $t_{\rm fin}$ & $5$\\
Mesh & & $72\times72$ for both viscosity contrasts\\
Viscosity contrast & $\lambda=\eta(q=1)/\eta(q=-1)$ & $1$ and $10$\\
Gravity parameter & $G$ & $0$\\
\hline
\end{tabularx}
\caption{Parameters used in Experiment~1.}
\label{tab:exp1}
\end{table}

\begin{figure}[h!]
  \centering
  \includegraphics[width=0.5\textwidth]{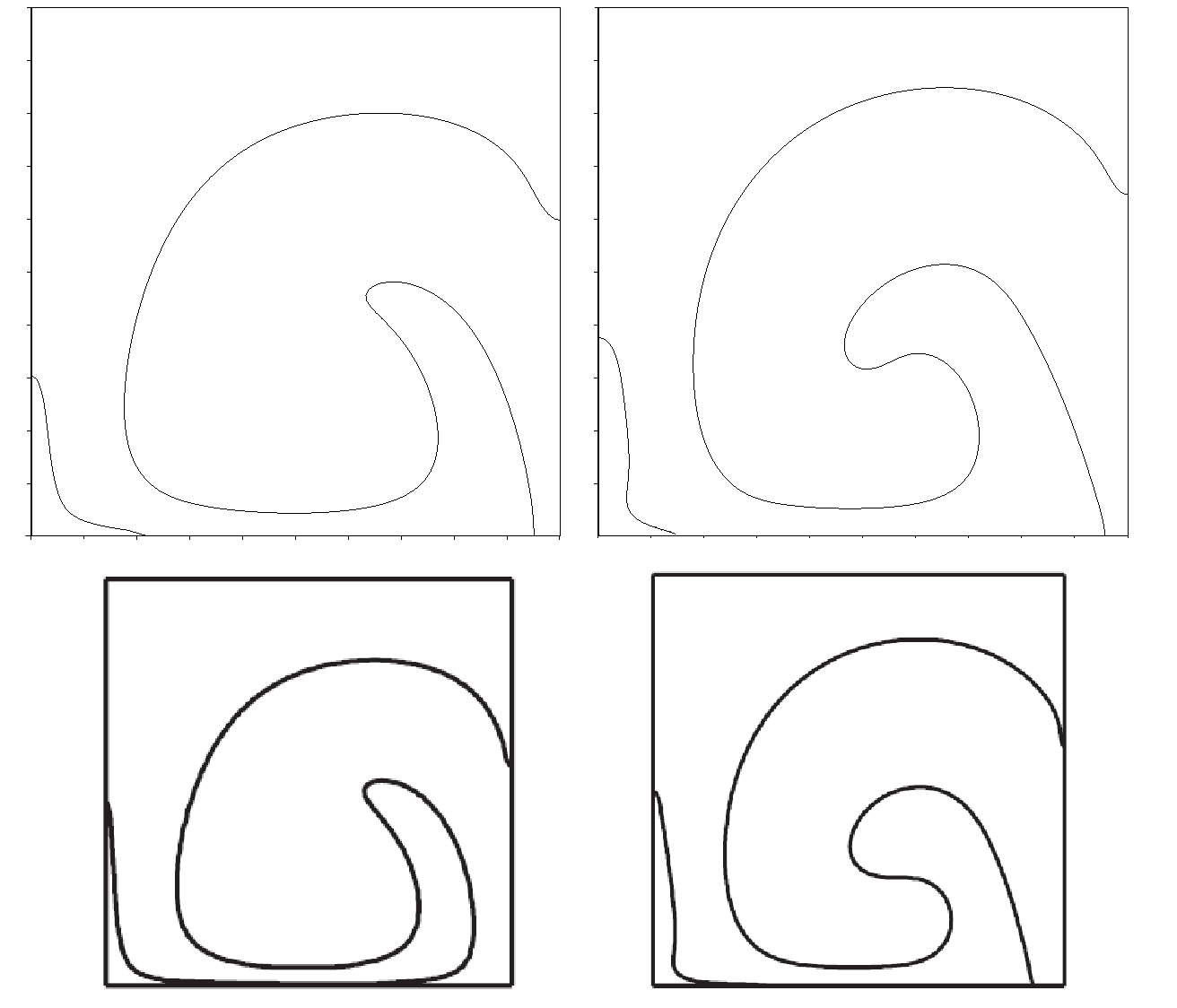}
  \caption[Comparison of interface profiles at $t=1$ for $\lambda=1$ (left) and \(\lambda=10\) (right).]
  {Comparison of the interface profiles $q=0$ at $t=1$ for $\lambda=1$ (left) and $\lambda=10$ (right).  The bottom row reproduces the corresponding results of Khatavkar et al.\ from~\cite{Khat}.}
  \label{together}
\end{figure}

\subsection{Experiment 2: Isothermal phase separation above and below the transition temperature}

In this experiment we suppress advection by setting \(\mathbf u\equiv0\) and do not solve the heat equation.  The evolution is therefore governed solely by the Cahn--Hilliard dynamics with a prescribed constant temperature.  This allows us to isolate the effect of the temperature-dependent bulk potential on phase separation.

The domain is $\Omega=(0,1)^2$, discretized by a structured
$100\times100$ mesh.  We set $\mathbf u=0$, impose homogeneous no-flux
conditions for $c$ and $\mu$, and initialize the nodal phase field by
\eqref{eq:bernoulli_initial_data}. The two prescribed dimensionless temperatures are
\[
\Theta=1.5 \qquad\text{and}\qquad \Theta=0.3,
\]
with
\[
Pe=1000,\qquad Ch=0.01,\qquad \Delta t=0.01,
\qquad t_{\rm fin}=2.
\]
For $\Theta>1$, the quadratic coefficient in the free energy is positive and the bulk potential is single-well, with its unconstrained minimum at $c=0$. Because the no-flux Cahn--Hilliard dynamics conserve the spatial mean, the solution instead relaxes toward a nearly uniform state with the realized average concentration. For $\Theta<1$, the bulk potential is double-well and the solution undergoes phase separation followed by coarsening, as shown in Figures~\ref{exp2_2}--\ref{hsv}.

\begin{table}[h!]
\centering
\begin{tabularx}{\textwidth}{@{}l l X@{}}
\hline
\textbf{Parameter} & \textbf{Symbol} & \textbf{Value}\\
\hline
Domain & $\Omega$ & $(0,1)^2$\\
Mesh & & $100\times100$\\
Boundary conditions & & $\partial_n c=\partial_n\mu=0$ on $\partial\Omega$\\
Initial phase field & $c_h^0$ & $-0.5$ (probability $0.7$), $+0.5$ (probability $0.3$)\\
Random seed & & $20260803$\\
Temperature, case A & $\Theta$ & $1.5$\\
Temperature, case B & $\Theta$ & $0.3$\\
Mass P\'eclet number & $Pe$ & $1000$\\
Cahn number & $Ch$ & $0.01$\\
Time step & $\Delta t$ & $0.01$\\
Final time & $t_{\rm fin}$ & $2$\\
\hline
\end{tabularx}
\caption{Parameters used in Experiment~2.  Temperature is dimensionless, so $\Theta=1$ is the transition value.}
\label{tab:exp2}
\end{table}

\begin{figure}[hbt!]
\begin{subfigure}{.33\textwidth}
  \centering
  \includegraphics[width=.9\linewidth]{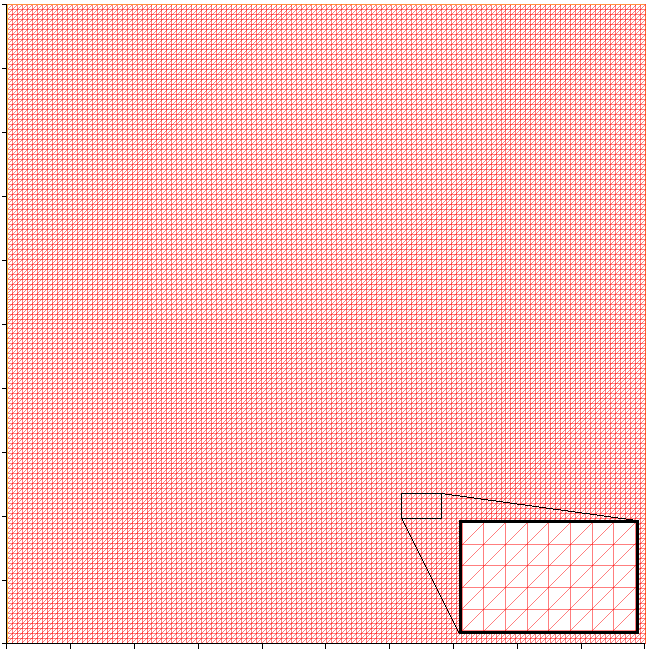}
  \caption{\(100\times100\) mesh}
\end{subfigure}%
\begin{subfigure}{.33\textwidth}
  \centering
  \includegraphics[width=.9\linewidth]{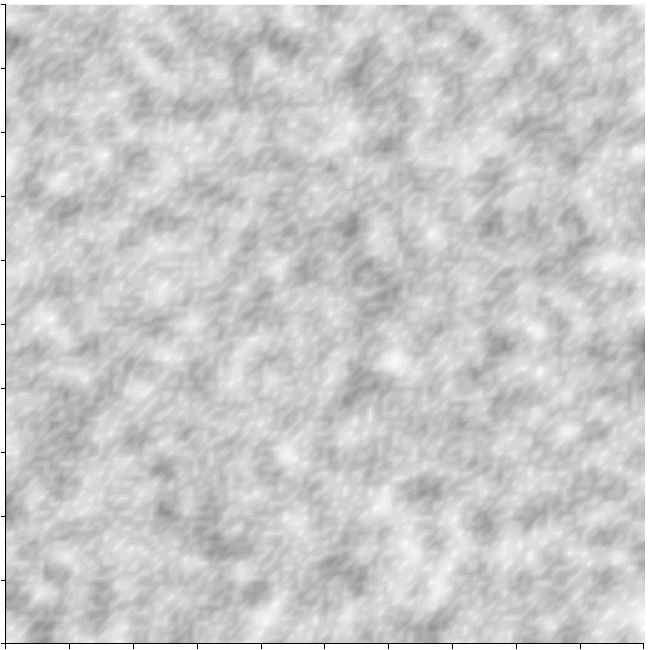}
  \caption{\(t=0\)}
\end{subfigure}
\begin{subfigure}{.33\textwidth}
  \centering
  \includegraphics[width=.9\linewidth]{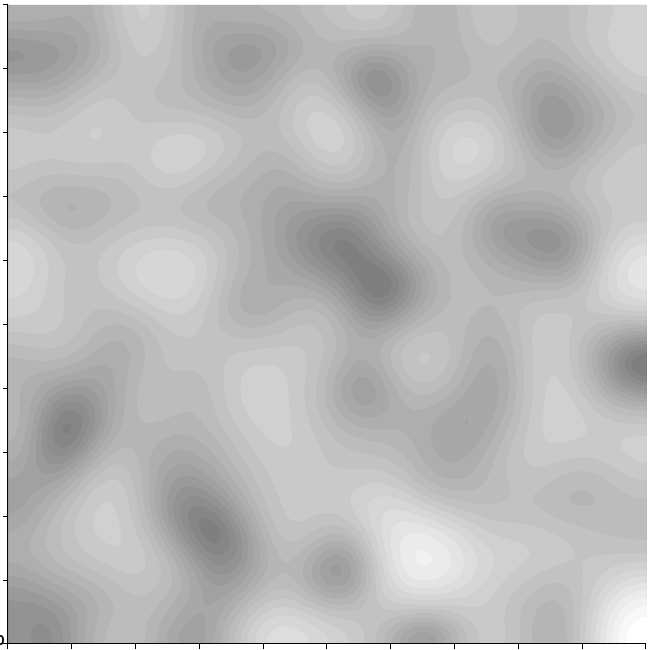}
  \caption{\(t=0.5\)}
\end{subfigure}
\begin{subfigure}{.33\textwidth}
  \centering
  \includegraphics[width=.9\linewidth]{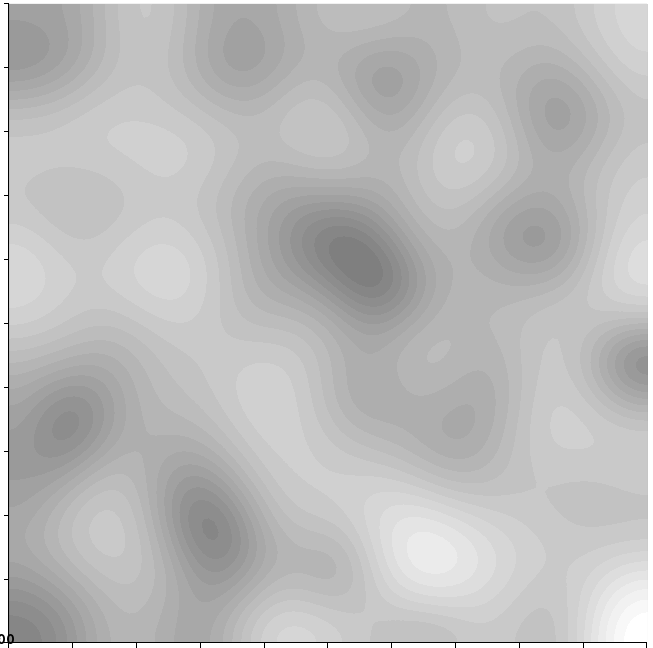}
  \caption{\(t=1\)}
\end{subfigure}%
\begin{subfigure}{.33\textwidth}
  \centering
  \includegraphics[width=.9\linewidth]{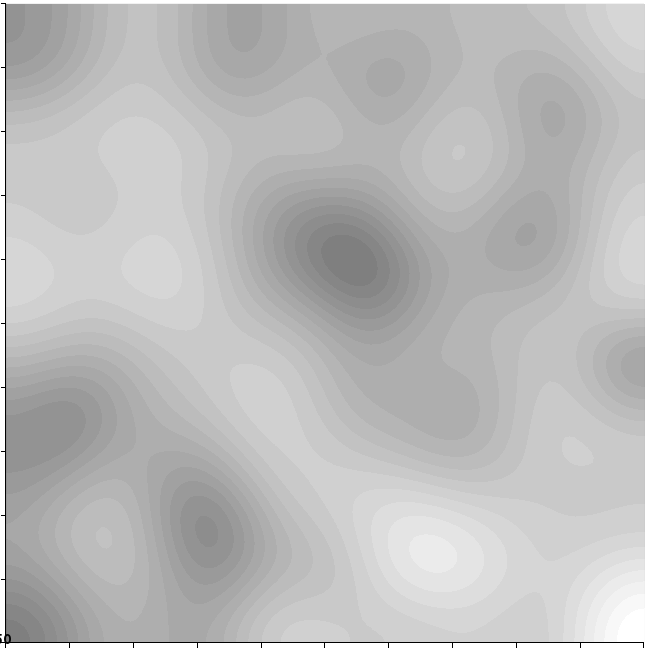}
  \caption{\(t=1.5\)}
\end{subfigure}
\begin{subfigure}{.33\textwidth}
  \centering
  \includegraphics[width=.9\linewidth]{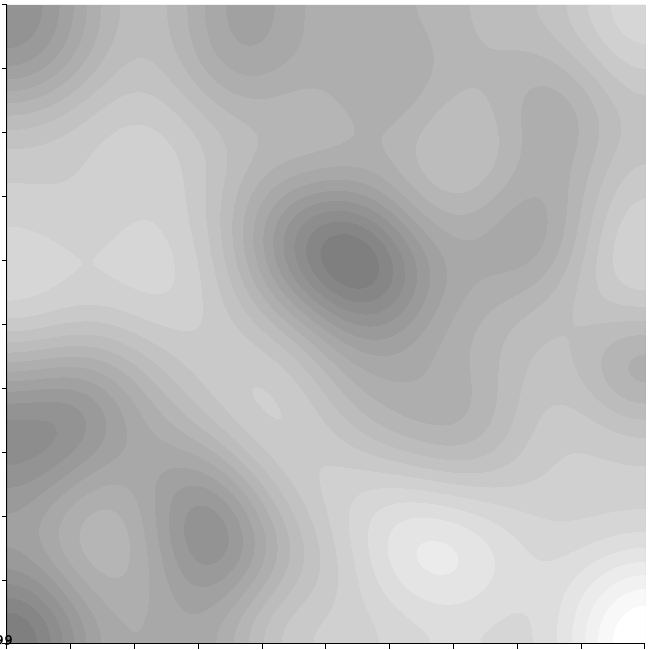}
  \caption{\(t=2\)}
\end{subfigure}
\caption{Snapshots of the phase field for $\Theta=1.5$ (single-well regime). The Bernoulli initial field relaxes rapidly toward a nearly spatially uniform state.}
\label{exp2_1}
\end{figure}

\begin{figure}[h]
\begin{subfigure}{.33\textwidth}
  \centering
  \includegraphics[width=.9\linewidth]{ma_mesh.png}
  \caption{\(100\times100\) mesh}
\end{subfigure}%
\begin{subfigure}{.33\textwidth}
  \centering
  \includegraphics[width=.9\linewidth]{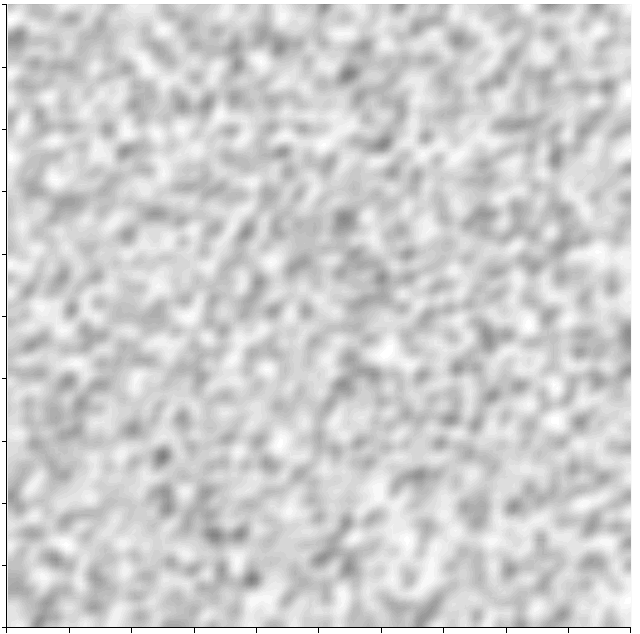}
  \caption{\(t=0\)}
\end{subfigure}
\begin{subfigure}{.33\textwidth}
  \centering
  \includegraphics[width=.9\linewidth]{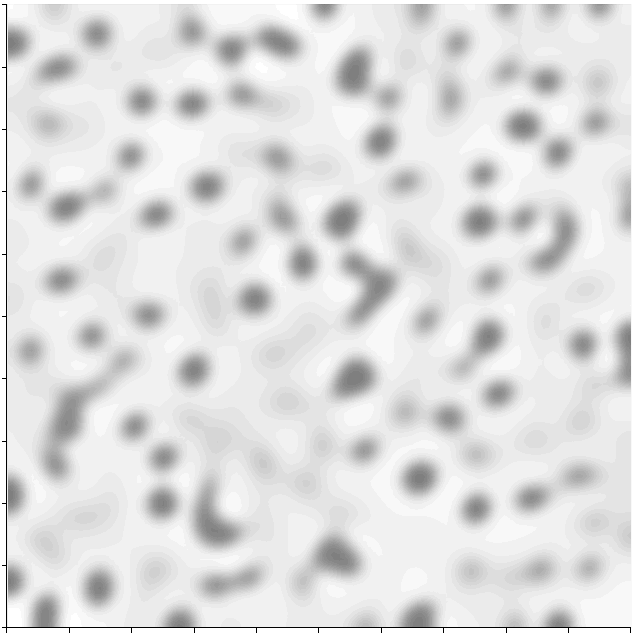}
  \caption{\(t=0.5\)}
\end{subfigure}
\begin{subfigure}{.33\textwidth}
  \centering
  \includegraphics[width=.9\linewidth]{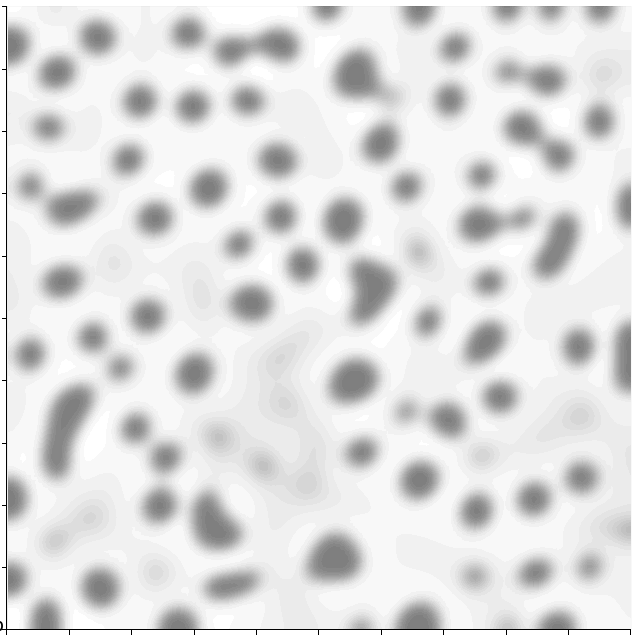}
  \caption{\(t=1\)}
\end{subfigure}%
\begin{subfigure}{.33\textwidth}
  \centering
  \includegraphics[width=.9\linewidth]{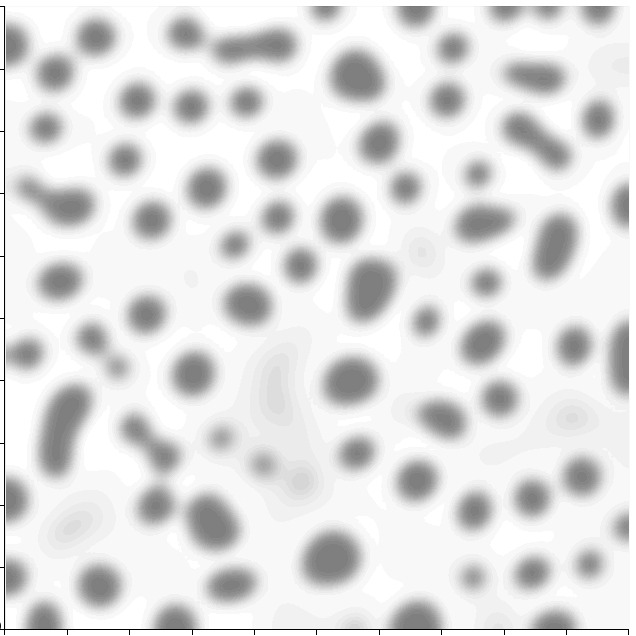}
  \caption{\(t=1.5\)}
\end{subfigure}
\begin{subfigure}{.33\textwidth}
  \centering
  \includegraphics[width=.9\linewidth]{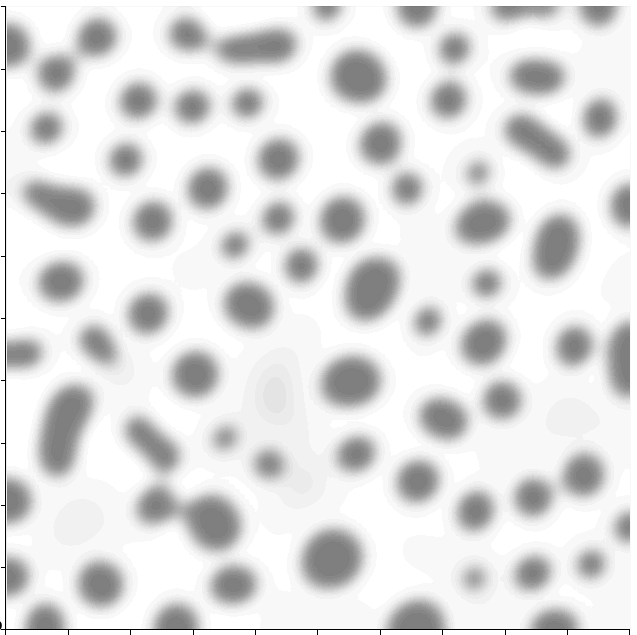}
  \caption{\(t=2\)}
\end{subfigure}
\caption{Snapshots of the phase field for $\Theta=0.3$ (double-well regime). Phase separation sets in rapidly and is followed by coarsening.}
\label{exp2_2}
\end{figure}

\begin{figure}[h]
  \centering
  \includegraphics[width=0.4\textwidth]{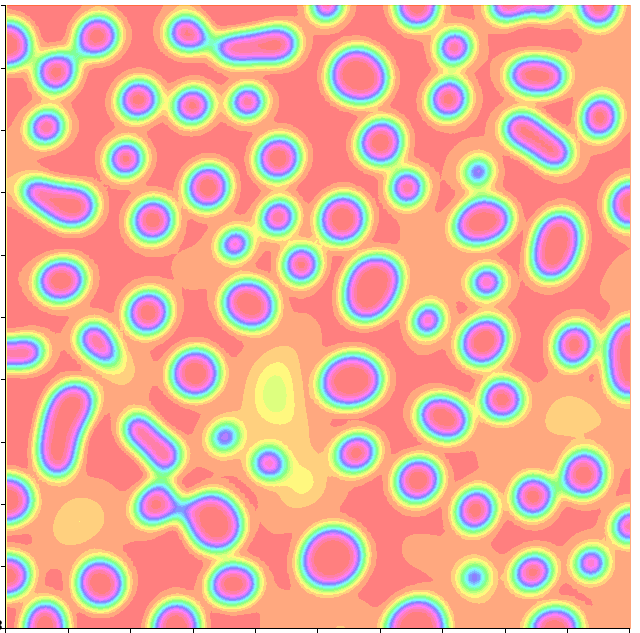}
    \caption{Phase distribution at $t=2$ for $\Theta=0.3$, visualized with an HSV colormap to emphasize the diffuse interfaces.}
  \label{hsv}
\end{figure}

\subsection{Experiment 3: Gravity-driven phase separation with variable density and viscosity}

The third experiment activates the coupling between the Cahn--Hilliard equation
and the quasi-static Stokes system in the presence of gravity.  The goal is to
illustrate the effect of buoyancy when the two phases have different densities
and viscosities.

The domain is $(0,1)^2$ with a $100\times100$ mesh and boundary labels
$\Gamma_1=\{y=0\}$, $\Gamma_2=\{x=1\}$,
$\Gamma_3=\{y=1\}$, and $\Gamma_4=\{x=0\}$.  We impose
$\mathbf u=0$ on $\Gamma_1$ and the natural traction condition on
$\Gamma_2\cup\Gamma_3\cup\Gamma_4$.  The phase field is initialized by
\eqref{eq:bernoulli_initial_data}, the temperature is fixed at $\Theta=0.3$,
and the heat equation is not solved.

\revfive{The gravity term used in the simulations is
\[
G=1,\qquad \widehat{\mathbf g}=(0,-1).
\]}

The coefficient laws used in this experiment are
\[
\rho_{\rm num}(c)=(0.5+c)+\lambda_\rho(0.5-c),\qquad
\eta_{\rm num}(c)=(0.5+c)+\lambda_\eta(0.5-c).
\]
Consequently,
$\rho_{\rm num}(0.5)=\eta_{\rm num}(0.5)=1$,
$\rho_{\rm num}(-0.5)=\lambda_\rho$, and
$\eta_{\rm num}(-0.5)=\lambda_\eta$.  Thus the association between the sign
of $c$ and the material phases is reversed relative to
\eqref{DefRhoc}--\eqref{DefEtac}; all phase-specific interpretations in this
experiment use the numerical labeling above.  With
$\lambda_\rho=0.0009$ and $\lambda_\eta=0.08$, the phase
$c\approx0.5$ is the denser and more viscous phase. 
The resulting figures are interpreted only as a qualitative sedimentation
trend under this particular parameter set.
All parameters are summarized in Table~\ref{tab:exp3}.

Snapshots of the phase field are shown in Figure~\ref{exp3}. Starting from the heterogeneous Bernoulli field, regions associated with the denser numerical phase migrate downward under the combined action of diffuse-interface relaxation and gravity and accumulate near the bottom boundary. The result is consistent with the expected qualitative sedimentation trend for this parameter set and these mixed no-slip/traction-free boundary conditions.

\begin{table}[h!]
\centering
\begin{tabularx}{\textwidth}{@{}l l X@{}}
\hline
\textbf{Parameter} & \textbf{Symbol} & \textbf{Value}\\
\hline
Domain & $\Omega$ & $(0,1)^2$\\
Mesh & & $100\times100$\\
Phase boundary conditions & & $\partial_n c=\partial_n\mu=0$ on $\partial\Omega$\\
Velocity boundary conditions & & $\mathbf u=0$ on $\Gamma_1$; traction-free on $\Gamma_2\cup\Gamma_3\cup\Gamma_4$\\
Initial phase field & $c_h^0$ & $-0.5$ (probability $0.7$), $+0.5$ (probability $0.3$)\\
Random seed & & $20260803$\\
Prescribed temperature & $\Theta$ & $0.3$\\
Mass P\'eclet number & $Pe$ & $1000$\\
Cahn number & $Ch$ & $0.01$\\
Gravity & $G\widehat{\mathbf g}$ & $(0,-1)$, i.e. $G=1$\\
Density contrast coefficient & $\lambda_\rho$ & $0.0009$\\
Viscosity contrast coefficient & $\lambda_\eta$ & $0.08$\\
Time step & $\Delta t$ & $0.01$\\
Final time & $t_{\rm fin}$ & $2$\\
\hline
\end{tabularx}
\caption{Parameters used in Experiment~3.}
\label{tab:exp3}
\end{table}

\begin{figure}[h]
\begin{subfigure}{.33\textwidth}
  \centering
  \includegraphics[width=.9\linewidth]{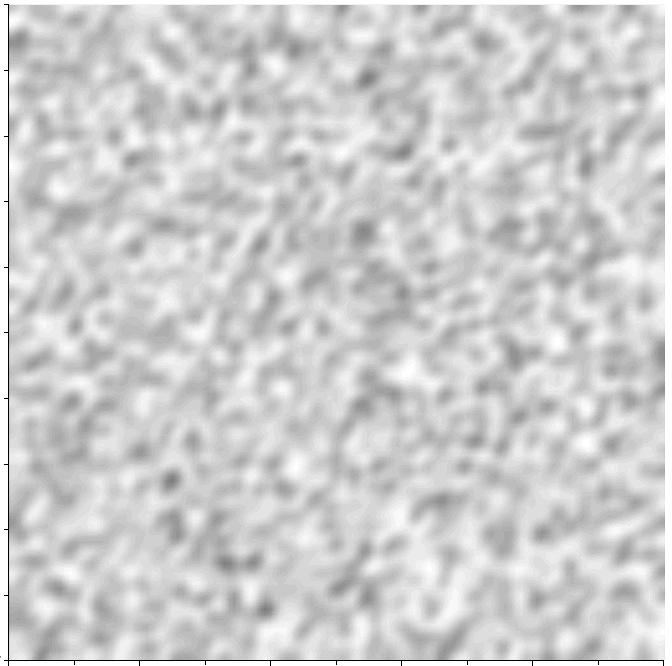}
  \caption{\(t=0\)}
\end{subfigure}%
\begin{subfigure}{.33\textwidth}
  \centering
  \includegraphics[width=.9\linewidth]{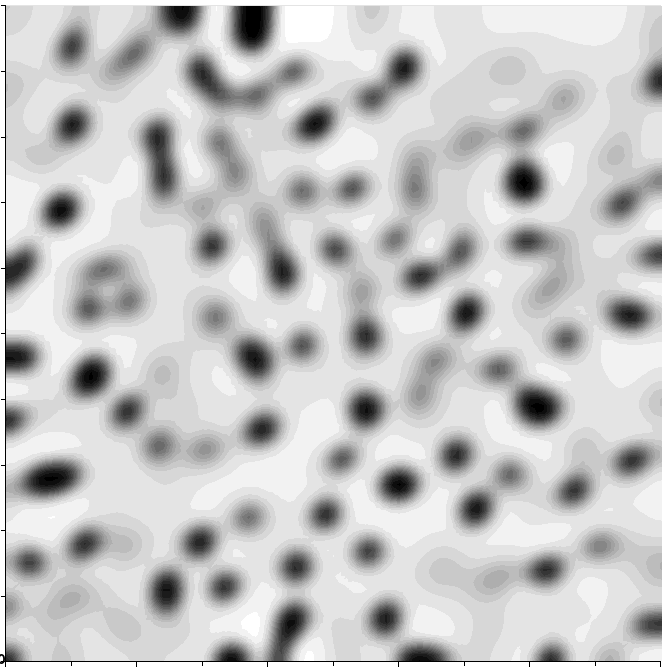}
  \caption{\(t=0.5\)}
\end{subfigure}
\begin{subfigure}{.33\textwidth}
  \centering
  \includegraphics[width=.9\linewidth]{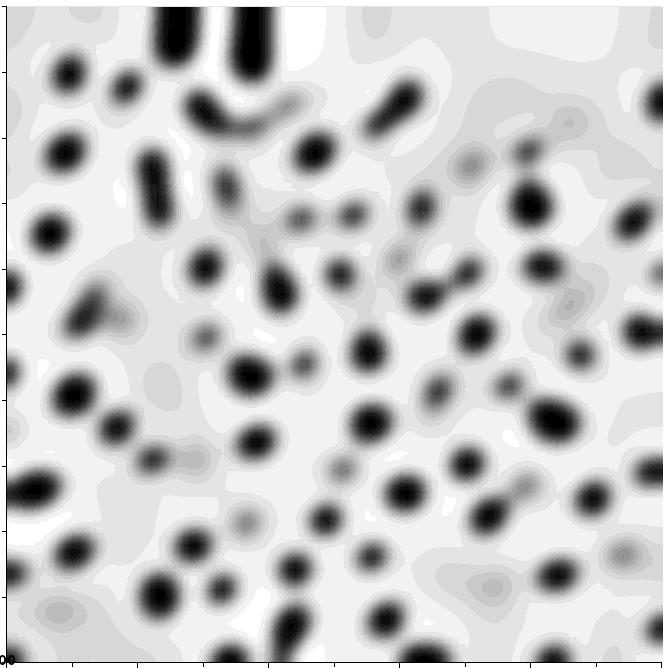}
  \caption{\(t=1\)}
\end{subfigure}
\begin{subfigure}{.33\textwidth}
  \centering
  \includegraphics[width=.9\linewidth]{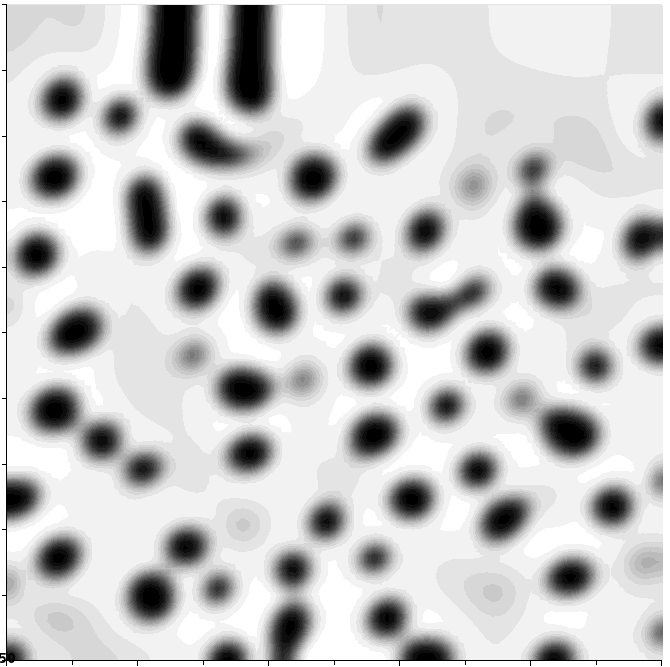}
  \caption{\(t=1.5\)}
\end{subfigure}%
\begin{subfigure}{.33\textwidth}
  \centering
  \includegraphics[width=.9\linewidth]{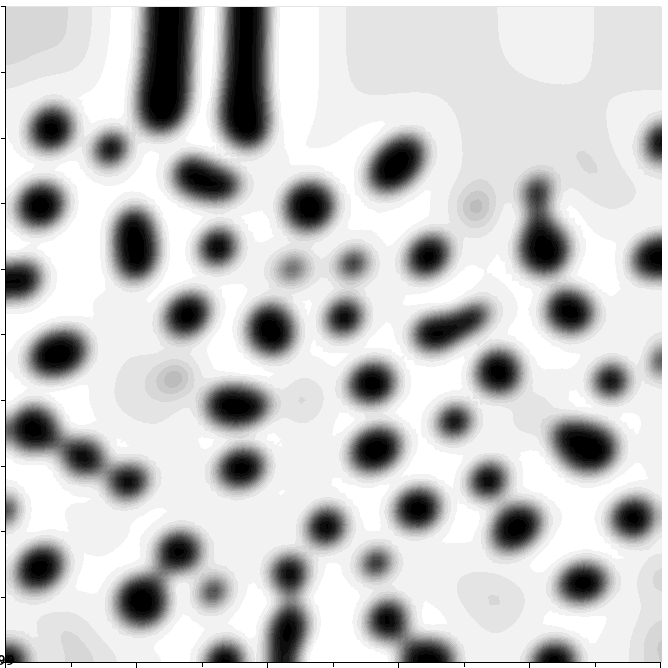}
  \caption{\(t=2\)}
\end{subfigure}
\caption{Phase-field snapshots for Experiment~3.  Dense domains migrate toward the lower boundary under the action of gravity and merge into a heavier layer.}
\label{exp3}
\end{figure}

\subsection{Experiment 4: Temperature-driven phase-field response near a heated or cooled boundary}

In the final experiment we study the response of the coupled system to an
imposed thermal gradient and a dynamically evolving temperature field.  The domain is $\Omega=(0,2)\times(0,1)$, discretized by a
$128\times64$ mesh.  The boundary labels are
$\Gamma_1=\{y=0\}$, $\Gamma_2=\{x=2\}$,
$\Gamma_3=\{y=1\}$, and $\Gamma_4=\{x=0\}$.  We impose
$\mathbf u=0$ on $\Gamma_1$ and the natural traction condition on
$\Gamma_2\cup\Gamma_3\cup\Gamma_4$.  For temperature,
$\Gamma_\Theta=\Gamma_4$ and
$\Gamma_{\Theta_N}=\Gamma_1\cup\Gamma_2\cup\Gamma_3$.

The parameters are
\[
Pe=1000,\qquad Ch=0.01,\qquad Pe_\Theta=2.5,
\qquad G=1,\qquad \widehat{\mathbf g}=(0,-1),
\qquad \Delta t=0.01,\qquad t_{\rm fin}=2.
\]
The temperature update is the pure-diffusion step
\eqref{Heat_discrete_revised}.  The initial phase field is given by
\eqref{eq:bernoulli_initial_data}.

We use the following exploratory,
piecewise-temperature-dependent contrast functions:
\[
\lambda_\rho(\Theta)=
\begin{cases}
0.0009,&\Theta<1,\\
1.6,&\Theta\ge1,
\end{cases}
\qquad
\lambda_\eta(\Theta)=
\begin{cases}
0.08,&\Theta<1,\\
1.67,&\Theta\ge1.
\end{cases}
\]
The numerical coefficient laws are
\[
\rho_{\rm num}(c,\Theta)
=(0.5+c)+\lambda_\rho(\Theta)(0.5-c),
\qquad
\eta_{\rm num}(c,\Theta)
=(0.5+c)+\lambda_\eta(\Theta)(0.5-c).
\]
This piecewise constitutive dependence is not part of the analytical model in
Sections~\ref{sec:model}--\ref{sec:analysis}.

In the heated-boundary case, $\Theta_0=0.3$ and
$\Theta|_{\Gamma_4}=1.5$.  In the cooled-boundary case,
$\Theta_0=1.5$ and $\Theta|_{\Gamma_4}=0.3$.  The remaining thermal boundaries
are insulated. \revfive{The figures illustrate the qualitative response of the phase field to these imposed thermal profiles under the above 
constitutive choices.}

The temperature affects this experiment in two ways: it changes the Landau coefficient $\beta(\Theta)$ and it changes the density and viscosity contrasts through the piecewise functions above. Because both contrast functions jump from values below one to values above one
at $\Theta=1$, the sign of $c$ associated with the denser and more viscous phase reverses at the temperature threshold. Gravity remains active with $G=1$.  Consequently, this experiment does not isolate the effect of $\beta(\Theta)$ alone.  It is an exploratory constitutive extension of the model analyzed in Sections~\ref{sec:model}--\ref{sec:analysis}, and the figures are interpreted as a combined thermal, material-contrast, and buoyancy response. The thermal equation is diffusion-only, so the temperature drives the phase and flow fields but is not advected by the computed velocity. Although Dirichlet temperature data are imposed on $\Gamma_4$, this boundary
is mechanically traction-free and should therefore be described as a
thermally controlled boundary rather than a no-slip wall.

We consider two thermal boundary configurations.

\paragraph{Case 1 (heated boundary)}
The interior temperature degrees of freedom are initialized at
$\Theta_0=0.3$, while the Dirichlet value $\Theta_D=1.5$ is imposed on
$\Gamma_4$ from the first heat-diffusion update. The remaining thermal
boundaries are insulated.

The resulting phase-field evolution is shown in Figure~\ref{exp4_1}, and the temperature distribution at $t=1$ in Figure~\ref{exp4_temp1}.  As heat diffuses from $\Gamma_4$, the local Landau potential changes from a double-well to a single-well form when $\Theta$ crosses $1$.  At the same time, the density and viscosity contrast functions switch to their high-temperature values. The observed near-boundary homogenization is consistent with this combined constitutive response and is not attributed exclusively to $\beta(\Theta)$.

\begin{figure}[hbt!]
\begin{subfigure}{.5\textwidth}
  \centering
  \includegraphics[width=.9\linewidth]{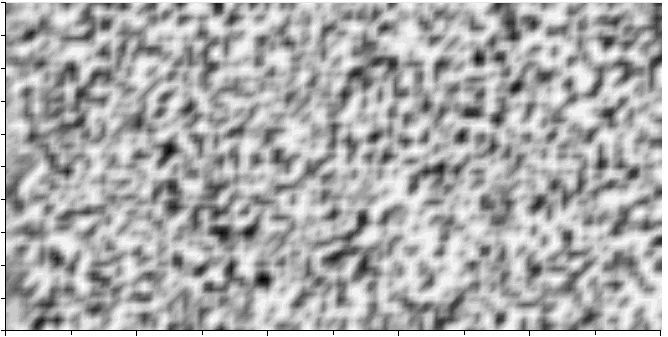}
  \caption{\(t=0\)}
\end{subfigure}
\begin{subfigure}{.5\textwidth}
  \centering
  \includegraphics[width=.9\linewidth]{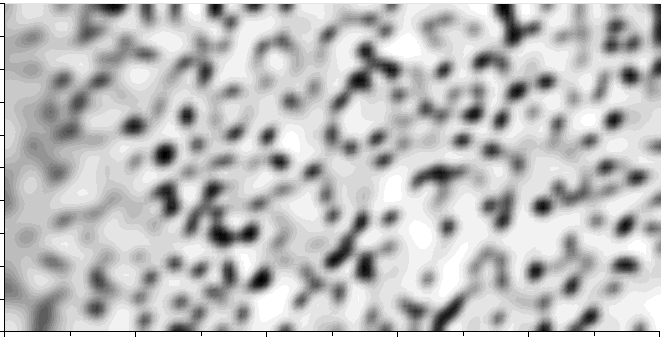}
  \caption{\(t=0.2\)}
\end{subfigure}
\begin{subfigure}{.5\textwidth}
  \centering
  \includegraphics[width=.9\linewidth]{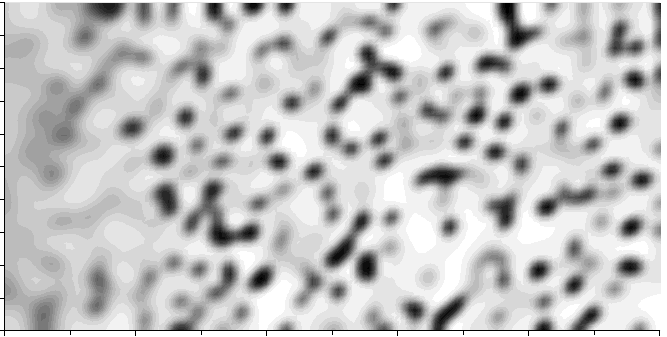}
  \caption{\(t=0.4\)}
\end{subfigure}
\begin{subfigure}{.5\textwidth}
  \centering
  \includegraphics[width=.9\linewidth]{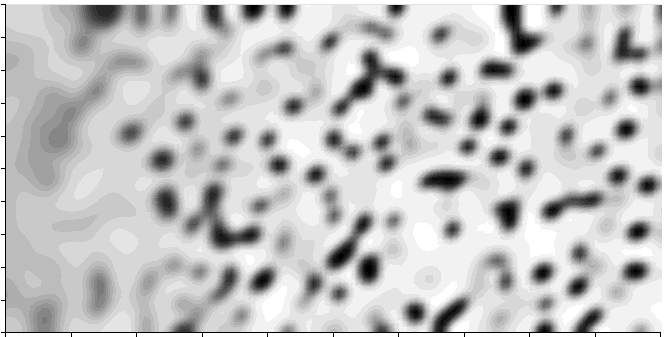}
  \caption{\(t=0.6\)}
\end{subfigure}
\begin{subfigure}{.5\textwidth}
  \centering
  \includegraphics[width=.9\linewidth]{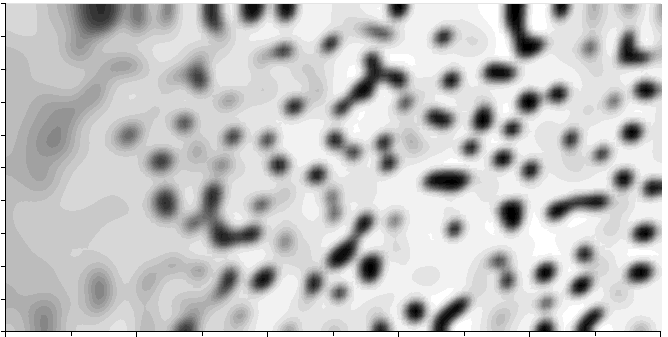}
  \caption{\(t=0.8\)}
\end{subfigure}
\begin{subfigure}{.5\textwidth}
  \centering
  \includegraphics[width=.9\linewidth]{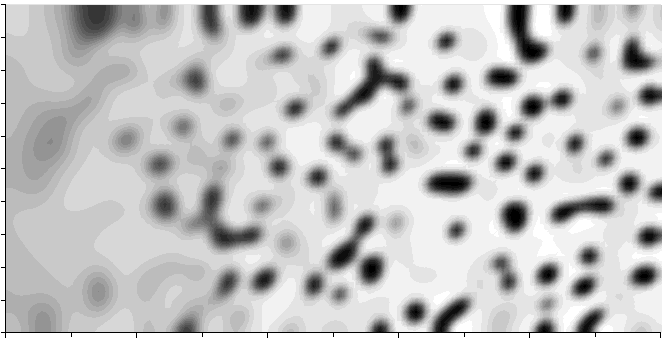}
  \caption{\(t=1\)}
\end{subfigure}
\caption{Phase-field snapshots for the heated-boundary configuration:
$\Theta_0=0.3$ in the interior and
$\Theta|_{\Gamma_4}=1.5\,\Theta_S$.  Heating near the left boundary suppresses
phase separation locally.}
\label{exp4_1}
\end{figure}

\begin{figure}[h!]
  \centering
  \includegraphics[width=0.6\textwidth]{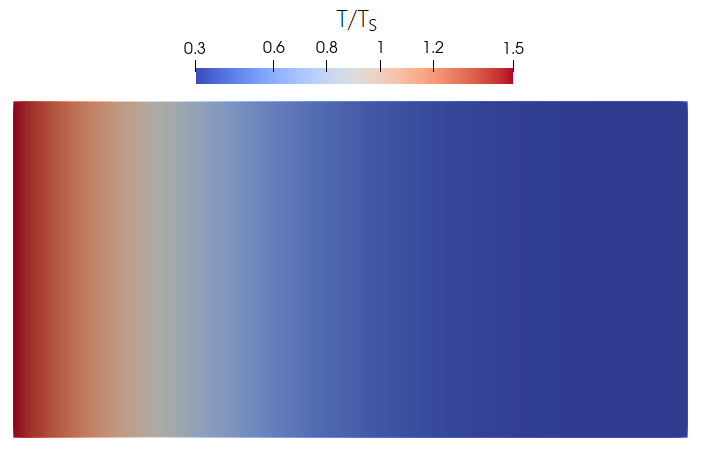}
  \caption{Temperature distribution at \(t=1\) for the heated-boundary configuration.}
  \label{exp4_temp1}
\end{figure}

\paragraph{Case 2 (cooled boundary)}
The interior temperature degrees of freedom are initialized at
$\Theta_0=1.5$, while the Dirichlet value $\Theta_D=0.3$ is imposed on
$\Gamma_4$ from the first heat-diffusion update. The remaining thermal
boundaries are insulated.
All other parameters are unchanged. The evolution of the phase field is shown in Figure~\ref{exp4_2}, and the temperature distribution at $t=2$ in Figure~\ref{exp4_temp2}.  Cooling lowers $\beta(\Theta)$ near $\Gamma_4$ and restores a local double-well bulk potential, while the material contrast functions simultaneously switch to their low-temperature values. Phase-separated domains appear near the cooled
boundary. This is interpreted as the combined response of the potential and the temperature-dependent material coefficients.

\begin{table}[h]
\centering
\begin{tabularx}{\textwidth}{@{}l l X@{}}
\hline
\textbf{Parameter} & \textbf{Symbol} & \textbf{Value}\\
\hline
Domain & $\Omega$ & $(0,2)\times(0,1)$\\
Mesh & & $128\times64$\\
Phase boundary conditions & & $\partial_n c=\partial_n\mu=0$ on $\partial\Omega$\\
Velocity boundary conditions & & $\mathbf u=0$ on $\Gamma_1$; traction-free on $\Gamma_2\cup\Gamma_3\cup\Gamma_4$\\
Thermal boundary conditions & & Dirichlet on $\Gamma_4$; insulated on $\Gamma_1\cup\Gamma_2\cup\Gamma_3$\\
Initial phase field & $c_h^0$ & $-0.5$ (probability $0.7$), $+0.5$ (probability $0.3$)\\
Random seed & & $20260803$\\
Initial/boundary temperature, heated case & $(\Theta_0,\Theta_D)$ & $(0.3,1.5)$\\
Initial/boundary temperature, cooled case & $(\Theta_0,\Theta_D)$ & $(1.5,0.3)$\\
Mass P\'eclet number & $Pe$ & $1000$\\
Thermal P\'eclet number & $Pe_\Theta$ & $2.5$\\
Cahn number & $Ch$ & $0.01$\\
Gravity & $G\widehat{\mathbf g}$ & $(0,-1)$, i.e. $G=1$\\
Density contrast & $\lambda_\rho(\Theta)$ & $0.0009$ for $\Theta<1$; $1.6$ for $\Theta\ge1$\\
Viscosity contrast & $\lambda_\eta(\Theta)$ & $0.08$ for $\Theta<1$; $1.67$ for $\Theta\ge1$\\
Time step & $\Delta t$ & $0.01$\\
Final time & $t_{\rm fin}$ & $2$\\
\hline
\end{tabularx}
\caption{Parameters used in Experiment~4.  Temperature is dimensionless and
the transition value is $\Theta=1$.}
\label{tab:exp4}
\end{table}

\begin{figure}[hbt!]
\begin{subfigure}{.5\textwidth}
  \centering
  \includegraphics[width=.9\linewidth]{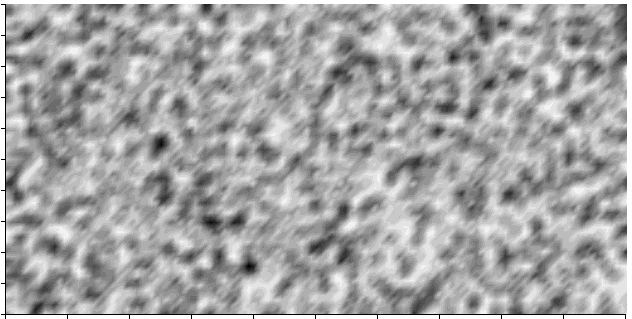}
  \caption{\(t=0\)}
\end{subfigure}
\begin{subfigure}{.5\textwidth}
  \centering
  \includegraphics[width=.9\linewidth]{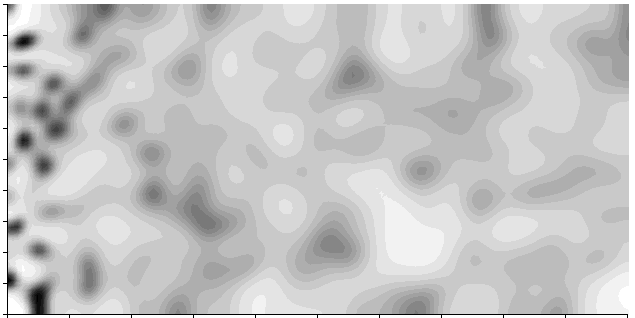}
  \caption{\(t=0.5\)}
\end{subfigure}
\begin{subfigure}{.5\textwidth}
  \centering
  \includegraphics[width=.9\linewidth]{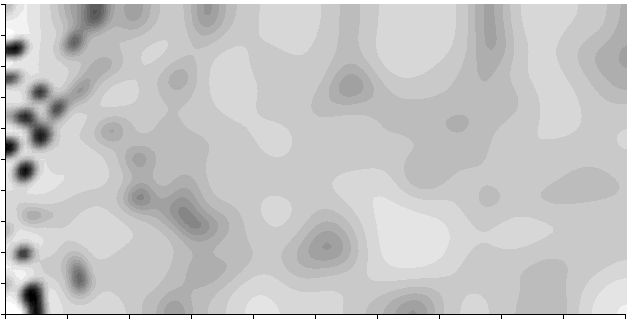}
  \caption{\(t=1\)}
\end{subfigure}
\begin{subfigure}{.5\textwidth}
  \centering
  \includegraphics[width=.9\linewidth]{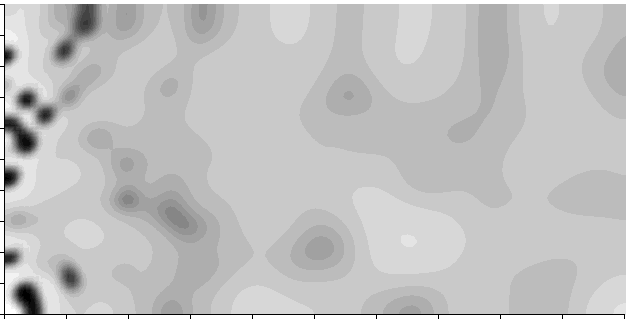}
  \caption{\(t=1.5\)}
\end{subfigure}
\begin{subfigure}{.5\textwidth}
  \centering
  \includegraphics[width=.9\linewidth]{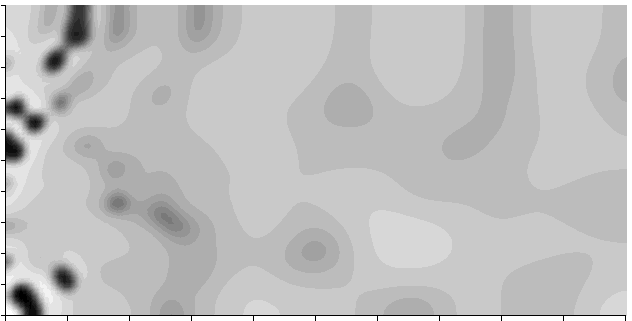}
  \caption{\(t=2\)}
\end{subfigure}
\caption{Phase-field snapshots for the cooled-boundary configuration:
$\Theta_0=1.5\,\Theta_S$ in the interior and
$\Theta|_{\Gamma_4}=0.3$.  Cooling near the left boundary triggers local phase separation and subsequent growth of phase-separated domains.}
\label{exp4_2}
\end{figure}

\begin{figure}[!h]
  \centering
  \includegraphics[width=0.6\textwidth]{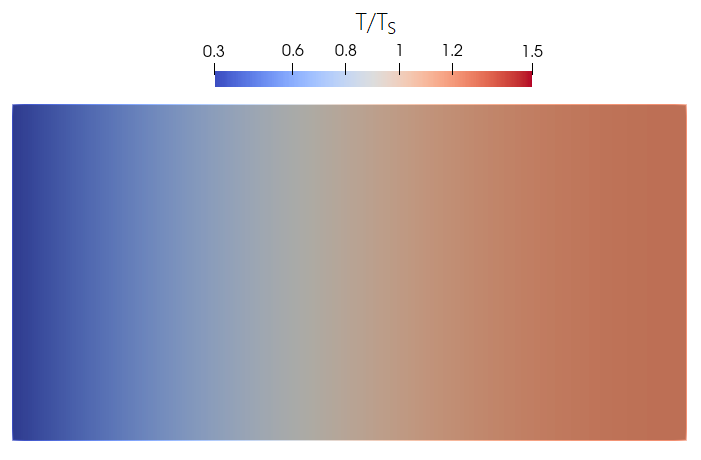}
  \caption{Temperature distribution at \(t=2\) for the cooled-boundary configuration.}
  \label{exp4_temp2}
\end{figure}

\section{Conclusion}

In this work, we introduced a temperature-coupled Cahn--Hilliard--Stokes--Heat model for thermally driven phase separation motivated by condensation phenomena in battery-inspired geometries. The model combines a temperature-dependent Landau free energy with variable-density Stokes flow and heat transport, providing a diffuse-interface framework for studying the interaction between phase separation, fluid motion, and thermal effects.

On the analytical side, we established the local-in-time existence of weak solutions for a regularized auxiliary formulation of the coupled system. The proof is based on the analysis of the heat, Cahn--Hilliard, and Stokes subproblems, the construction of the corresponding solution operators, and their coupling through a Schauder fixed-point argument.

To complement the analytical results, we presented parameter-specific
finite-element illustrations for the quasi-static numerical model and a
separate quantitative verification of the stand-alone isothermal diffusive Cahn--Hilliard update. The latter confirms machine-precision mass conservation and the expected refinement behaviour and shows monotone energy decay for the tested parameter set. \revfive{The coupled examples are qualitative and parameter-specific, no global
discrete energy law or comprehensive multi-parameter robustness claim is made for the non-isothermal numerical scheme.}

Several questions remain open. A particularly important direction for future work is to establish a well-posedness theory for the quasi-static model considered in the numerical simulations. Other natural extensions include the study of global-in-time solutions, long-time dynamics, and more general constitutive laws and coupling mechanisms motivated by condensation phenomena.

\section*{Declaration of competing interest}
The authors declare that they have no known competing financial interests or personal relationships that could have appeared to influence the work reported in this paper.

\section*{Acknowledgement}
This work was partially supported by the Croatian Science Foundation under project IP-2022-10-7261 (ADESO) (A.N.) and IP-2022-10-2962 (B.M. and M.D.). B.M. was supported by the European Union – NextGenerationEU through the National Recovery and
Resilience Plan 2021-2026. Institutional grant of University of Zagreb Faculty of Science IK IA 1.1.3. Impact4Math. 
A.N. permanent address is the University of Zagreb.
The authors thank the anonymous reviewer for the careful reading of the manuscript and for the constructive comments and suggestions, which helped to improve the paper.

\section*{Declaration of generative AI and AI-assisted technologies in the manuscript preparation process}
During the preparation of this work, the authors used ChatGPT (OpenAI) to assist with language editing and to improve the clarity and readability of the manuscript. After using this tool, the authors reviewed and edited the content as needed and take full responsibility for the content of the published article.

\pagebreak

\end{document}